\setlist[enumerate]{
	topsep = .5ex,
	itemsep=.5ex,
	label=\arabic*.,
	ref  =\arabic*.
}
\setlist[itemize]{
	itemsep=.5ex,
	topsep = .5ex
}
\newtheorem{theo}{Theorem}[section]
\newtheorem{lemma}[theo]{Lemma}
\newtheorem{corol}[theo]{Corollary}
\newtheorem{propos}[theo]{Proposition}
\newtheorem{fact}[theo]{Fact}
\theoremstyle{definition}
\newtheorem{defin}[theo]{Definition}
\newtheorem{question}[theo]{Question}
\theoremstyle{remark}
\newtheorem{obs}[theo]{Remark}
\newcommand{\sytranslate}[3]{\raisebox{#2}{\kern#1#3}} 
\newcommand\syoverlap[2][c]{%
	\bgroup%
	\setstackEOL{,}
	\setstackgap{L}{0pt}%
	\Longstack[#1]{#2}%
	\egroup%
} 
\newcommand{\indep}[2][]{%
	\mathrel{
		\mathop{
			\vcenter{
				\hbox{\oalign{\noalign{\kern-.3ex}\hfil\scalebox{1}[.8]{$\vert$%
							\makebox[0pt]{\raisebox{.9ex}{\kern.3ex\scalebox{.5}{$\mathsf{#2}$}}}}%
						\hfil\cr
						\noalign{\kern-.7ex}
						$\smile$\cr\noalign{\kern-.3ex}}}
			}
		}\displaylimits_{#1}
	}
} 
\newcommand{\nindep}[2][]{%
	\mathrel{
		\mathop{
			\vcenter{
				\hbox{\oalign{%
						\noalign{\kern-.3ex}\hfil\scalebox{1}[.8]{$\vert$%
							\makebox[0pt]{\raisebox{.9ex}{\kern.3ex\scalebox{.5}{$\mathsf{#2}$}}}\makebox[0pt]{\kern-.7ex$\smallsetminus$}}%
						\hfil\cr
						\noalign{\kern-.7ex}
						$\smile$\cr\noalign{\kern-.3ex}}}
			}
		}\displaylimits_{#1}
	}
} 
\newcommand{\dotminus}{\mathbin{\text{\@dotminus}}}
\newcommand{\@dotminus}{%
	\ooalign{\hidewidth\raise.9ex\hbox{.}\hidewidth\cr$\m@th-$\cr}%
}
\let\supp\relax
\DeclareMathOperator{\supp}{supp}
\newcommand{\lowersub}[1]{\smash{_{{\footnotesize\textstyle\mathstrut}#1}}}
\newcommand{\meet}{\mathbin{\wedge}}
\newcommand{\join}{\mathbin{\vee}}
\newcommand{\symdiff}{\mathop{\triangle}} 
\newcommand{\tsigma}{\textgreek{\textsigma}} 
\newcommand{\dd}{\mathop{}\mathopen{}\mathrm{d}}
\DeclareMathOperator{\acl}{acl}
\DeclareMathOperator{\dcl}{dcl}
\newcommand{\lBL}{£L_{\mathrm{BL}}}
\newcommand{\alpl}[1][p]{\ensuremath{\mathrm{AL_{\mathit{#1}}L}}}
\newcommand{\boundary}{\partial}
\newcommand{\actson}{\curvearrowright}
\newcommand{\charfun}[1]{χ\lowersub{#1}}
\DeclareMathOperator{\stone}{Stone} 
\newcommand{\seq}[1]{\harpoonhat{#1}}
\DeclareMathOperator{\tilenum}{tn}
\newcommand{\inboundary}[2]{\boundary_{#1}^{\mathrm{in}}#2}
\DeclareMathOperator{\bandsymb}{€b}
\newcommand{\band}[2][]{\bandsymb_{#1}§(#2)}
\numberwithin{equation}{section}
\newcommand{\tgfree}{T_{G}^{\mathrm{free}}}
\begin{document}
	
	\title{Amenable group actions on $L_p$ lattices}
	\author{Antonio~M.~Scielzo}
	
	\begin{abstract}
		A result by Ornstein and Weiss states that a free and measure-preserving action of an amenable group on a probability space yields a decomposition of the space in disjoint images, up to a small error, analogous to the one given by the Rokhlin lemma in the case of a single transformation. We generalise this result to non-singular actions, and use it to prove that the theory of an action by automorphisms of an amenable group on a Banach $L_p$ lattice admits a model companion, which is stable and has quantifier elimination.
	\end{abstract}

	\maketitle
	\tableofcontents

\section{Introduction}

A classical result in ergodic theory is the Rokhlin lemma, which asserts that given an aperiodic measure-preserving transformation $σ$ of a standard measure space $(X,£F,μ)$, $ε>0$, and $n>0$, there exists some $A∈ £F$ such that $A$, $σ(A)$, …, $σ^{n} A$ are pairwise disjoint and $μ(X \setdiff \bigcup_{i⩽n} σ^i A) < ε$. Here, by aperiodic we mean that sets of $m$-periodic points $§{x∈X : σ^m(x) = x}$ are of measure zero for all $m>0$.

This result may be generalised to the case where $σ$ is an aperiodic non-singular transformation of $(X,£F,μ)$, i.e., it satisfies $μ(A) = 0$ if and only if $μ(σA) = 0$. The earliest reference the author could find for this is \cite[p.~71]{halmos}.

In another direction, Ornstein and Weiss have proved a generalisation of the Rokhlin lemma in \cite{o-w} to free measure-preserving actions of an amenable group $G$ on a standard probability space $(X,£F,μ)$. Given subsets $T_1,…,T_\ell$ of $G$, we say that they \emph{tile} $G$ if they are finite and there exist $C_1,…,C_\ell ⊆G$ such that $§{T_ic : c∈ C_i, 1⩽ i⩽ \ell}$ is a partition of $G$. The notation $Ta$ is used to denote the set $§{ta : t ∈T}$, whether $a$ belongs to $G$ or to $X$. Similarly, $TA$ stands for $§{ta : t∈T, a ∈ A}$, for any $A⊆G$ or $A⊆X$.
The result by Ornstein and Weiss may then be phrased as follows. 
\begin{theo}[{\cite[Theorem~II.2.5]{o-w}}]\label{o-w}
	Let $G$ be a countable amenable group that acts freely and preserving the measure on a standard probability space $(X,£F,μ)$, and suppose that $T_1,…,T_\ell ⊆G$ tile $G$. Then there exist measurable sets $W_1,…,W_\ell∈ £F$ such that
	\begin{enumerate}
		\item for each $1⩽i⩽\ell$, if $t,s∈ T_i$ are different, then $tW_i$ and $sW_i$ are disjoint,
		\item if $1⩽ i<j ⩽ \ell$, then $T_iW_i$ and $T_jW_j$ are disjoint,
		\item $μ§(\bigcup_{1⩽i⩽\ell} T_i W_i) > 1-ε$.
	\end{enumerate}
\end{theo}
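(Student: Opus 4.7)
The plan is to construct $W_1,\ldots,W_\ell$ greedily by a maximality/exhaustion argument, building them in order $i = 1, 2, \ldots, \ell$, and then to derive the covering estimate~(3) from amenability together with freeness.

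First fix $C_1,\ldots,C_\ell \subseteq G$ witnessing the tiling, so that $\{T_i c : 1 \leq i \leq \ell,\ c \in C_i\}$ partitions $G$. Since the action is free and measure-preserving, for $\mu$-almost every $x \in X$ the orbit map $g \mapsto gx$ is injective, so each orbit carries a canonical copy of this partition of $G$, which is what will let us transfer the group-theoretic tiling to the space.

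Next, define the $W_i$ by upward recursion on $i$. Writing $D_{<i} := \bigcup_{j<i} T_j W_j$ (with $D_{<1} = \emptyset$), use an exhaustion argument in the measure algebra to pick $W_i$ maximal among measurable sets $W$ for which $\{tW : t \in T_i\}$ is pairwise disjoint and $T_i W \cap D_{<i} = \emptyset$. Conclusions~(1) and~(2) then hold by construction.

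The main obstacle is~(3). Suppose for contradiction that $U := X \setminus \bigcup_i T_i W_i$ satisfies $\mu(U) \geq \varepsilon$. Using amenability, select a Følner set $F \subseteq G$ large enough to be approximately tiled by left-translates of the $T_i$'s with boundary error negligible relative to $|F|$. By Fubini and measure preservation, averaging $\mu(g^{-1}U)$ over $g \in F$ recovers $\mu(U)$, so for a positive-measure set of points $x$ the trace $\{g \in F : gx \in U\}$ has density at least $\varepsilon - o(1)$ in $F$. Feeding this density into the tiling of $F$, one should be able to extract some index $i$, an element $c$ in the tiling of $F$, and a measurable positive-measure set $Y$ of base points $cx$ for which the entire tile $T_i Y$ sits inside $U$. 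Adjoining $Y$ to $W_i$ would then not violate the within-tile disjointness nor the disjointness from $D_{<i}$, contradicting the maximality of $W_i$. The technical crux is precisely in controlling measurability and positivity in this final extraction: one must refine $U$ so that the favourable base points form a measurable set of positive measure, shrink it using freeness so that its own $T_i$-translates are pairwise disjoint, and verify that all the required disjointness conditions survive the enlargement.
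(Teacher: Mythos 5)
The strategy breaks down at exactly the step you flag as the "technical crux", and the failure is not merely technical. From the Følner averaging you only get, for a positive-measure set of points $x$, that the visit set $\{g\in F : gx\in U\}$ has density about $\varepsilon$ in $F$ — a \emph{small} density — and a subset of $F$ of small density need not contain any full translate $T_i c$ of any tile, so no positive-measure $Y$ with $T_iY\subseteq U$ can be extracted and no contradiction with maximality arises. Worse, the overall plan (greedy maximal towers, then hope the complement is small) cannot work even in principle: maximality of the $W_i$'s does not force $\mu(U)<\varepsilon$. Already for $G=\mathbb{Z}$, $\ell=1$, $T=\{0,1\}$ and an irrational rotation $\sigma$, a maximal $W$ with $W\cap\sigma W=\emptyset$ only guarantees that $U\cap\sigma^{-1}U$ is null, which is compatible with $\mu(U)$ close to $1/2$; for such $U$ almost no orbit ever visits $U$ at two consecutive times, so there is genuinely no tile sitting inside $U$ to feed back into the maximality argument, yet the conclusion of the theorem fails for this $W$. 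In short, maximality with respect to the tiles $T_i$ themselves is too weak an extremal principle; the error cannot be beaten down below $\varepsilon$ by enlarging the $W_i$'s one tile at a time.

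This is why the Ornstein--Weiss argument (and the paper's proof of its non-singular generalisation, Theorems \ref{mono rokhlin} and \ref{amenable rokhlin lemma}) works at two scales rather than one: one first takes an auxiliary Følner set $H$ that is far more invariant than the prescribed tolerance, partitions $X$ into $H$-sets (\Cref{multicover partition}) so that the towers $HA_n$ form a multicover, extracts via \Cref{finite subcover} a finite subfamily covering most of $X$ with bounded total measure, and only then tiles the $S$-interior of each $H$-tower by translates $T_ic\subseteq H$ of the prescribed tiles, resolving conflicts between successive towers by discarding points that necessarily lie in the (small) boundaries $(\partial_S H)B_n$. The smallness of the uncovered part comes from the quantitative multicover estimates and the invariance of $H$, not from any maximality of the $T_i$-sets. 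If you want to salvage your outline, the Følner set $F$ must play the role of $H$: you would need to build long towers over an $F$-adapted base, cut them by the tiling of $G$ restricted to $F$, and control the overlaps — which is essentially the published proof rather than the greedy one.
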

Notice that if $G = \zz$, then an interval $§{0,…,n}$ on its own tiles $G$, and freeness is the same as aperiodicity, so the theorem above becomes precisely the classical Rokhlin lemma for a probability space.

The first objective of this paper, which we will carry out in \Cref{ns actions}, is to prove a third generalisation of the Rokhlin lemma that combines the previous two. We will show that we may drop the condition of measure preservation from \Cref{o-w}, and replace it with the weaker notion of non-singularity, thus completing the following square of generalisations.
\begin{equation*}\tikzcdset{diagrams={arrows={shorten >=-.5ex,shorten <=-.5ex}}}
	\begin{tikzcd}[row sep = 2mm]
		& \parbox{4cm}{\footnotesize\centering R-lemma for \\ non-singular transformations} \arrow[dr, hookrightarrow, dashed] & \\
		\parbox{4cm}{\footnotesize\centering R-lemma for \\ measure-preserving transformations} \arrow[ur, hookrightarrow] \arrow[dr, hookrightarrow] & & \parbox{4cm}{\footnotesize\centering R-lemma for \\ non-singular amenable-group actions}\\
		& \parbox{4cm}{\footnotesize\centering R-lemma for \\ measure-preserving amenable-group actions} \arrow[ur, hookrightarrow, dashed]
	\end{tikzcd}
\end{equation*}

A result by Downarowicz, Huczek, and Zhang asserts that if $G$ is a countable amenable group, then we may find, for any finite $K⊆G$ and any $ε>0$, some subsets $T_1,…,T_\ell ⊆ G$ that tile $G$ and are \emph{$(K,ε)$-invariant}, i.e., such that the set $\boundary_KT_i = K^{-1}T_i ∩ K^{-1}(G\setdiff T_i)$ contains fewer than $ε \card{T_i}$ elements, for each $i⩽ \ell$.
If the action of $G$ is measure preserving, then the images $(\boundary_K T_i ) W_i$ of the sets $W_i$ given by \Cref{o-w} are automatically of measure smaller than $ε μ(T_iW_i)$.  A priori, one would lose this property when moving to the non-singular case. We will show however that by carefully choosing the sets $W_i$ and taking very invariant subsets $T_1,…,T_\ell ⊆ G$, we can still have the images $(\boundary_K T_i ) W_i$ of small measure with respect to $μ(T_iW_i)$.

The second objective of this paper is to apply this result to the model theory of group actions by automorphisms on Banach $L_p$ lattices. Recall that an $L_p$ lattice is a Banach space of the form $L^p(X,£F,μ)$ over a measure space $(X,£F,μ)$, endowed with the usual order, $f⩽g$ if $f(x) ⩽ g(x)$ for almost all $x∈X$, which makes it a lattice.

The link between the measure-space setting and the $L_p$-lattice one is given in \Cref{section : representation}, where we show a representation result stating that, given an $L_p$ lattice $E$ and an action $ρ$ of $G$ on it, there exists a measure space $(X,£F,μ)$ and an action $π$ of $G$ by non-singular transformations on $(X,£F,μ)$ such that $E$ is isomorphic to $L^p(X,£F,μ)$ and $ρ$ is induced by $π$.
Moreover, this isomorphism may be chosen in such a way as to make any maximal disjoint subset of positive elements of $E$ correspond to a set of characteristic functions in $L^p(X)$.
This will enable us to transfer the Rokhlin lemma for actions on probability spaces by non-singular transformations to the context of $L_p$ lattices, to obtain a local decomposition result for actions on these structures.

In \Cref{GLp lattices}, we study the theory $T_G$ of \emph{$G$-$L_p$ lattices}, that is, $L_p$ lattices acted upon by a countable group $G$. We introduce a suitable notion of “functional” freeness for $G$-$L_p$ lattices, which coincides with the one induced by the correspondence described in the previous paragraph, and which can be expressed as a set of axioms in the language of $G$-$L_p$ lattices. Expanding $T_G$ with these axioms, we obtain a new theory $\tgfree$, which we will prove to be the model companion of $T_G$, when $G$ is amenable. The decomposition result mentioned above will be the fundamental tool for this proof.

This generalises the results in \cite[Section~3]{scielzo}, where we studied the case $G = \zz$, that is, the theory of $L_p$ lattices with a distinguished automorphism. Analogously to the case of $\zz$, we will show that this new theory $\tgfree$ is stable and has quantifier elimination.

\section{Non-singular actions of amenable groups}\label{ns actions}
In this section, we generalise the decomposition result by Ornstein and Weiss to the case of non-singular actions of amenable groups on probability spaces. 

\subsection{Approximate multicovers} We shall start by introducing the notion of approximate multicover, and show that it is possible to extract an approximate cover from it with a given bound on the overlapping of its elements. 

Throughout this section, $(X,£F,μ)$ will be a probability space.

\begin{defin}
	A family $§{U_i}_{i∈I}$ of measurable subsets of $X$ is called a \emph{multicover} of $X$ with $M$ layers and error $δ$ if
	\begin{enumerate}[label=(\roman*)]
		\item \label{first multicover} $\displaystyle \sum_{i∈I} \charfun{U_i} ⩽ M$ a.e.
		\item \label{second multicover} $\displaystyle \sum_{i∈I} μ(U_i) ⩾ (1-δ) M$ .
	\end{enumerate}
\end{defin}

The following two lemmas are slightly modified versions of \cite[I.\S 2 Lemma 2]{o-w} and \cite[II.\S 2 Lemma 4]{o-w}, respectively, where we have incorporated the error $δ$.

\begin{lemma}
	Let $§{U_i}_{i∈I}$ be a multicover with error $δ$. If $V$ is a measurable subset of $X$, then there exists $i∈I$ such that 
	\begin{equation}\label{basic lemma}
	μ(U_i ∩ V) ⩽ \frac{μ(U_i)\, μ(V)}{1-δ}.
	\end{equation}
\end{lemma}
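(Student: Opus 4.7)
The plan is to argue by contradiction via an averaging (double-counting) argument. Assume that inequality \eqref{basic lemma} fails for every $i \in I$, that is,
\[
\mu(U_i \cap V) > \frac{\mu(U_i)\,\mu(V)}{1-\delta}
\]
for all $i$. Summing over $I$ and invoking condition \ref{second multicover} of the definition of multicover, we obtain
\[
\sum_{i \in I} \mu(U_i \cap V) \;>\; \frac{\mu(V)}{1-\delta} \sum_{i \in I} \mu(U_i) \;\geq\; \frac{\mu(V)}{1-\delta}\,(1-\delta)\,M \;=\; M\,\mu(V).
\]

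For the matching upper bound, I would rewrite the sum as an integral and use condition \ref{first multicover}. Since $\charfun{U_i \cap V} = \charfun{U_i} \charfun{V}$, the monotone convergence theorem (or Tonelli, if $I$ is uncountable one restricts to a countable subindex giving the same sum) yields
\[
\sum_{i \in I} \mu(U_i \cap V) \;=\; \int_X \charfun{V}\sum_{i \in I} \charfun{U_i} \dd \mu \;\leq\; M \int_X \charfun{V} \dd\mu \;=\; M\,\mu(V),
\]
contradicting the strict lower bound above. Hence some $i \in I$ must satisfy \eqref{basic lemma}.

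There is no real obstacle: the argument is the standard “pigeonhole in the mean” principle, and the factor $1/(1-\delta)$ arises precisely to absorb the multiplicative slack in hypothesis \ref{second multicover}. The only point to handle with minor care is whether $I$ may be uncountable; this is harmless, since both sides of the purported strict inequality are sums of nonnegative terms, and the integral identity holds in $[0,\infty]$ regardless of the cardinality of $I$.
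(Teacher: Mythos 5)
Your proof is correct and is essentially the same argument as the paper's: integrate condition \ref{first multicover} against $\charfun{V}$ to get $\sum_i μ(U_i ∩ V) ⩽ M\,μ(V)$, then derive a contradiction from condition \ref{second multicover} under the assumption that \eqref{basic lemma} fails for every $i$. The extra remark about uncountable index sets is harmless but not needed beyond what the paper already does implicitly.
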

\begin{proof}
	Call $M$ the number of layers of the multicover. By multiplying the left and right sides of \ref{first multicover} by $\charfun{V}$ and integrating over $X$ we get
	\begin{equation*}
	\sum_{i∈I} μ(U_i ∩ V) ⩽ M\, μ(V).
	\end{equation*}
	If the inequality \eqref{basic lemma} did not hold for any $i∈I$, we would then have
	\begin{equation*}
	M \, μ(V) ⩾ \sum_{i∈I} μ(U_i ∩ V) > \frac{1}{1-δ}\, μ(V) \sum_{i∈I}μ(U_i) \stackrel{\mathrm{\ref{second multicover}}}{⩾} M\, μ(V),
	\end{equation*}
	which is a contradiction.
\end{proof}

\begin{lemma}\label{first multicover lemma}
	Let $§{U_n}_{n∈\nn}$ be a multicover with error $δ$. For any integer $k >0$, there is a subset $J⊆\nn$ such that
	\begin{enumerate}[label=(\alph*)]
		\item \label{first subcover} $\displaystyle μ§( \bigcup_{j∈J} U_j)[Big] ⩾ (1-δ)§(1-\frac 1 k)$,
		\item \label{second subcover}  $\displaystyle \sum_{j∈J} μ(U_j) ⩽ k$.
	\end{enumerate}
\end{lemma}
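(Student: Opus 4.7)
The plan is to extract $J$ via a maximality argument, with the previous lemma providing the crucial extension step. Concretely, I would consider the family
\[
\mathcal{F} := \Big\{\, J \subseteq \mathbb{N} : \sum_{j \in J} \mu(U_j) \leq k\, \mu\Big(\bigcup_{j \in J} U_j\Big) \,\Big\},
\]
which contains $\emptyset$ and (as discussed below) is closed under unions of chains. Zorn's lemma therefore yields a maximal element $J^* \in \mathcal{F}$.

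Condition~\ref{second subcover} is then immediate: $\sum_{j \in J^*} \mu(U_j) \leq k\, \mu(\bigcup_{j \in J^*} U_j) \leq k$, since $\mu$ takes values in $[0,1]$. For \ref{first subcover}, I would set $V^* := \bigcup_{j \in J^*} U_j$ and argue by contradiction, assuming $\mu(V^*) < (1-\delta)(1-\tfrac{1}{k})$; in particular $\mu(V^*) < 1 - \delta$. Applying the previous lemma with $V = V^*$ produces some $i \in \mathbb{N}$ with $\mu(U_i \cap V^*) \leq \mu(U_i)\mu(V^*)/(1-\delta)$, and necessarily $i \notin J^*$, since otherwise $U_i \subseteq V^*$ would force the contradictory $\mu(V^*) \geq 1 - \delta$. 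The inequality rearranges to
\[
\mu(U_i \setminus V^*) \geq \mu(U_i)\Big(1 - \frac{\mu(V^*)}{1-\delta}\Big) > \frac{\mu(U_i)}{k},
\]
and then
\[
\sum_{j \in J^* \cup \{i\}} \mu(U_j) = \sum_{j \in J^*} \mu(U_j) + \mu(U_i) \leq k\, \mu(V^*) + k\, \mu(U_i \setminus V^*) = k\, \mu(V^* \cup U_i),
\]
showing that $J^* \cup \{i\} \in \mathcal{F}$, which contradicts the maximality of $J^*$.

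The only delicate step — and hence the main obstacle — is the one swept under the rug above: verifying that $\mathcal{F}$ is closed under unions of chains, which is what enables the use of Zorn's lemma. Because $\mathbb{N}$ is countable, every chain $\{J_\alpha\}$ in $\mathcal{F}$ admits a cofinal countable subchain $J_{\alpha_1} \subseteq J_{\alpha_2} \subseteq \cdots$ whose union coincides with $\bigcup_\alpha J_\alpha$; continuity of $\mu$ along the increasing union $\bigcup_n \bigcup_{j \in J_{\alpha_n}} U_j$, together with monotone convergence for the non-negative sums, then transfers the defining inequality of $\mathcal{F}$ to the limit. This point aside, the rest of the argument is fully routine.
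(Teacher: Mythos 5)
Your argument is correct, but it takes a genuinely different route from the paper's. The paper builds $J$ by a greedy recursion: at each stage it invokes the previous lemma to pick the smallest index $j$ whose overlap with the union constructed so far is less than $(1-\frac1k)\,\mu(U_j)$, checks that the chosen indices strictly increase, gets condition (a) by contradiction from the minimality of the choices, and gets condition (b) by disjointifying, i.e.\ by observing that $V_j = U_j \setminus \bigcup_{i\in J,\, i<j} U_i$ satisfies $\mu(V_j)\geqslant \frac1k\,\mu(U_j)$. You instead take a maximal element $J^*$ of the family $\mathcal{F}=\{J\subseteq\mathbb{N} : \sum_{j\in J}\mu(U_j)\leqslant k\,\mu(\bigcup_{j\in J}U_j)\}$ and use the previous lemma only to show that if $\bigcup_{j\in J^*}U_j$ were too small, $J^*$ could be properly extended inside $\mathcal{F}$. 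The invariant defining $\mathcal{F}$ is exactly the quantitative content of the paper's disjointification, so the two proofs carry the same estimate; yours packages it more cleanly (no index bookkeeping, and (b) is immediate from membership in $\mathcal{F}$), at the price of invoking Zorn's lemma, which is dispensable here since over the countable index set the same maximal $J^*$ could be produced by precisely the greedy recursion the paper runs. Your verification of chain-closure is fine: a chain in $\mathcal{F}$ admits a countable increasing subfamily with the same union, and continuity of $\mu$ from below together with monotone convergence of the partial sums passes the inequality to the union.

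One small point needs patching: the step ``necessarily $i\notin J^*$'' tacitly divides by $\mu(U_i)$. If the index $i$ supplied by the previous lemma has $\mu(U_i)=0$ (null sets satisfy the lemma's inequality trivially), it may well lie in $J^*$ — indeed maximality forces every null index into $J^*$ — and then no proper extension is obtained. This is harmless: either discard the null $U_i$'s at the outset (they affect neither the multicover conditions nor the conclusion), or apply the previous lemma to the sub-multicover consisting of the positive-measure $U_i$'s. With that one-line fix the argument is complete.
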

\begin{proof}
	We define a sequence of indices $j_0, j_1,…$ recursively, starting with $j_0 = 0$. Suppose we have defined the first $n$ indices. If $μ§( \bigcup_{\ell <n} U_{j_\ell}) ⩾ (1-δ)§(1- 1/k)$, we stop, otherwise we apply the previous lemma with $V = \bigcup_{\ell <n} U_{j_\ell}$ and find an index $j$ such that
	\begin{equation}\label{proof multicover}
	μ§(U_j ∩ V) ⩽ \frac{1}{1-δ}\, μ(U_j)\, μ(V) < §(1-\frac{1}{k})[Big]\, μ(U_j).
	\end{equation}
	We can now define $j_n$ to be the smallest such $j$. 
	Notice that $j_n$ cannot be equal to any of the previous $j_\ell$'s, because in that case we would have 
	\begin{equation*}
		μ(U_{j_n}) = μ§(U_{j_n} ∩ \bigcup_{\ell < n} U_{j_\ell} )[Big] < §(1-\frac{1}{k})[Big]\, μ(U_{j_n}),
	\end{equation*}
	which is impossible. Also, $j_n$ cannot be smaller than $j_{n-1}$, as this would imply
	\begin{equation*}
		μ§(U_{j_n} ∩ \bigcup_{\ell < n-1} U_{j_\ell} )[Big] 
		⩽ μ§(U_{j_n} ∩ \bigcup_{\ell < n} U_{j_\ell} )[Big]
		< §(1-\frac{1}{k})[Big]\, μ(U_{j_n}),
	\end{equation*}
	which contradicts the minimality of $j_{n-1}$. This means that the sequence $(j_n)_n$ is strictly increasing.

	Denote by $J$ the set of the indices obtained with this construction. 
	If $J$ is finite, then \ref{first subcover} is trivially satisfied. Suppose now that $J$ is infinite and that \ref{first subcover} does not hold. We may apply the previous lemma again to find a $j$ such that \eqref{proof multicover} holds with $V = \bigcup_{i∈J} U_i$.	As $J$ is infinite, there must be an $n$ such that $j < j_{n}$. If we consider just the first $n$ sets $U_{j_\ell}$, we get
	\begin{equation*}
	μ§(U_j ∩ \bigcup_{\ell < n} U_{j_\ell} )[Big] ⩽ μ§(U_j ∩ V)  < §(1-\frac{1}{k})[Big]\, μ(U_j),
	\end{equation*}
	but, by construction, $j_n$ was the smallest index $j$ satisfying this inequality, a contradiction which shows that \ref{first subcover} is satisfied.
	
	To prove \ref{second subcover}, define for each $j∈J$, the set $V_j = U_j \setdiff \bigcup_{i ∈ J,i<j} U_i$, so that the $V_j$'s are pairwise disjoint. Their measure is 
	\begin{equation*}
	μ(V_j) = μ(U_j) - μ§(U_j ∩ \bigcup_{i ∈ J,i<j} U_i)[Big] ⩾ \frac 1k \,μ(U_j),
	\end{equation*}
	by construction. Then, using that fact that the $V_j$'s are pairwise disjoint,
	\begin{equation*}
	\sum_{j∈J} μ(U_j) ⩽ \sum_{j∈J} k\, μ(V_j) = k\, μ§(\bigcup_{j∈J} V_j)[Big] ⩽ k. \qedhere
	\end{equation*}
\end{proof}

The following corollary shows that by relaxing the second condition in the lemma above, we may demand that the set of indices $J$ be finite. 

\begin{corol}\label{finite subcover}
	Let $§{U_n}_{n∈\nn}$ be a multicover with error $δ$. For any integer $k >0$, there is a finite subfamily $§{V_n}_{n⩽N}$ of $§{U_n}_{n∈\nn}$ such that
	\begin{enumerate}[label=(\alph*)]
		\item \label{first finite subcover}$\displaystyle μ§( \bigcup_{n⩽N} V_n)[Big] ⩾ (1-δ)§(1-\frac 1 k)$,
		\item \label{second finite subcover}$\displaystyle \sum_{n⩽N} μ(V_n) ⩽ 2k$.
	\end{enumerate}
\end{corol}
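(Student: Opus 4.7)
The plan is to derive this corollary from \Cref{first multicover lemma} by truncation, applying the lemma with $2k$ in place of $k$ so as to have some slack to lose when passing from a countable to a finite subfamily.

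More concretely, I would apply \Cref{first multicover lemma} to $\{U_n\}_{n\in\nn}$ with $2k$ in place of $k$, obtaining a set $J\subseteq\nn$ such that $\mu\bigl(\bigcup_{j\in J}U_j\bigr) \geq (1-\delta)\bigl(1-\tfrac{1}{2k}\bigr)$ and $\sum_{j\in J}\mu(U_j) \leq 2k$. If $J$ is already finite, I would enumerate it as $\{j_0,\dots,j_N\}$, set $V_m = U_{j_m}$, and note that \ref{first finite subcover} holds because $(1-\delta)(1-\tfrac{1}{2k}) \geq (1-\delta)(1-\tfrac 1 k)$, while \ref{second finite subcover} is immediate from $\sum_{m\leq N}\mu(V_m) \leq \sum_{j\in J}\mu(U_j) \leq 2k$.

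Otherwise $J$ is infinite, and I would enumerate it as $\{j_0<j_1<\cdots\}$. The measures $\mu\bigl(\bigcup_{m\leq N}U_{j_m}\bigr)$ increase with $N$ and converge to $\mu\bigl(\bigcup_{j\in J}U_j\bigr)$ by continuity from below. Since this limit is at least $(1-\delta)(1-\tfrac{1}{2k})$, which strictly exceeds $(1-\delta)(1-\tfrac 1 k)$ in the only non-trivial case $\delta<1$, I would pick $N$ large enough that $\mu\bigl(\bigcup_{m\leq N}U_{j_m}\bigr) \geq (1-\delta)(1-\tfrac 1 k)$ and set $V_m=U_{j_m}$ for $m\leq N$. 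The bound on the sum again descends to this finite subfamily for free.

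There is no real obstacle here: the multiplicative factor $2$ appearing in \ref{second finite subcover} is precisely the slack needed to absorb the truncation, and any factor strictly greater than $1$ would do the job with the corresponding adjustment in the $k$ fed to \Cref{first multicover lemma}.
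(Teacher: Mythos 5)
Your argument is correct and is essentially the paper's own proof: apply \Cref{first multicover lemma} with $2k$ in place of $k$, then use monotone convergence of the partial unions (continuity from below) to truncate $J$ to a finite $N$ while keeping the measure of the union above $(1-δ)(1-\tfrac 1k)$, with the bound $\sum μ(V_n) ⩽ 2k$ inherited for free. Your explicit handling of the finite-$J$ and trivial $δ⩾1$ cases is just a slightly more spelled-out version of the same argument.
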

\begin{proof}
	Apply the previous lemma to $§{U_n}_{n∈\nn}$ and $2k$ to find a family $§{V_n}_n ⊆ §{U_n}_{n}$ that satisfies \ref{first multicover lemma}.\ref{first subcover} and \ref{first multicover lemma}.\ref{second subcover} (with $k$ replaced by $2k$). 
	Using \ref{first multicover lemma}.\ref{first subcover} and the fact that the sequence $μ§(\bigcup_{i⩽n} V_i)$ is increasing in $n$, we may find an $N$ such that $μ(\bigcup_{n⩽N} V_n) ⩾ (1-δ)§(1- 1/ k)$ and \ref{first finite subcover} is proved. Point \ref{second finite subcover}  follows trivially from \ref{first multicover lemma}.\ref{second subcover}.
\end{proof}

\subsection{Non-singular free actions of amenable groups}

We will now consider a group $G$ acting measurably on a measure space $(X,£F,μ)$. 
A measurable action $G \actson (X,£F,μ)$ is said to be \emph{free} in the measure theoretic sense (or pointwise free) if the set of $x ∈ X$ for which there is some $g≠1_G$ with $gx = x$, i.e., the set where classical freeness fails, has measure zero. 
We will use a slightly different definition that gets rid of points.
\begin{defin}
	A measurable action $G \actson (X,£F,μ)$ is said to be \emph{setwise free} if for every $g∈G\setdiff §{1_G}$ and every $A∈£F$ with positive measure, there is some positive-measure set $B⊆A$ such that $μ(gB \symdiff B) ≠ 0$. 
\end{defin}
Notice that this is the same as asking that for every $g≠1_G$ and every positive element $a$ of the measure algebra associated to $(X,£F,μ)$ (\Cref{def measure algebra}), there is $0<b⩽a$ such that $gb ≠ b$.

When the group $G$ is countable, if the action is setwise free, then it is also pointwise free, since every subset of $§{x∈X: gx=x}$ is fixed by $g$.
The converse need not hold in general, but it is true if the space $(X,£F)$ is \emph{countably separated}, meaning that there is a countable set $£C⊆£F$ such that for any two distinct points in $X$ there is a set $C∈£C$ containing one but not the other. As an example, the unit interval with its Borel $σ$-algebra is countably separated. The interested reader may refer to \cite[343E--343M]{fremlin3i} for further details and more examples.

\begin{fact}
	Let $X$ be a countably separated measure space. If $G$ acts measurably and pointwise freely, then this action is setwise free.
\end{fact}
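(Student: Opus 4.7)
The plan is to argue by contraposition. Suppose setwise freeness fails: there exist $g \in G \setminus \{1_G\}$ and a positive-measure set $A \in \mathcal{F}$ such that $\mu(gB \symdiff B) = 0$ for every measurable $B \subseteq A$. The goal is to exhibit a positive-measure subset of $A$ on which $g$ acts as the identity, which will contradict pointwise freeness.

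The main idea is to promote the measure-theoretic equalities $gB = B$ (mod null) to honest pointwise equalities using the countable separating family $\mathcal{C} = \{C_n\}_{n \in \mathbb{N}} \subseteq \mathcal{F}$. Applying the hypothesis to $B = A$ and to each $B_n := A \cap C_n$ yields null sets $A \symdiff gA$ and $N_n := gB_n \symdiff B_n$; I collect them into $N := (A \symdiff gA) \cup \bigcup_n N_n$ and work on $A^\circ := A \setminus N$, which still has the full measure of $A$. For $x \in A^\circ$, the removal of $A \symdiff gA$ ensures $g^{-1}x \in A$, and the equivalence $x \in gB_n \Leftrightarrow x \in B_n$ (from $x \notin N_n$) reduces, using $x, g^{-1}x \in A$, to $x \in C_n \Leftrightarrow g^{-1}x \in C_n$. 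Thus $x$ and $g^{-1}x$ belong to the same elements of $\mathcal{C}$, and the separation property forces $g^{-1}x = x$, i.e.\ $gx = x$. Hence the fixed-point set of $g$ contains $A^\circ$ and has positive measure, contradicting pointwise freeness.

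The only real subtlety lies in the null-set bookkeeping: removing $A \symdiff gA$ from $A$ is essential, because otherwise points $x \in A$ with $g^{-1}x \notin A$ would break the translation of $x \in gB_n$ into a statement about $g^{-1}x$ belonging to $C_n$ inside $A$, and the separation argument would fail. Once this single clean-up has been performed, the pointwise conclusion is immediate from countable separation.
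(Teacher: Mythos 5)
Your proof is correct and rests on exactly the same idea as the paper's: the countable separating family detects when $g^{-1}x \neq x$, since $x$ and $g^{-1}x$ must then be separated by some set $C_n$ of the family. You run the argument contrapositively and localize it inside the given set $A$ (intersecting the $C_n$ with $A$ and discarding the countable union of null sets to land on a positive-measure set of fixed points of $g$), which in fact spells out the ``for every positive-measure $A$'' quantifier in the definition of setwise freeness that the paper's short direct proof leaves implicit.
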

\begin{proof}
	Let $(A_n)_{n<ω}$ be a sequence witnessing countable separation of $X$ and let $g$ be an element of $G\setdiff§{1_G}$. Then $g^{-1}x ≠ x$ if and only if there is $n<ω$ such that $x∈ A_n \symdiff gA_n$, and thus 
	\begin{equation*}
		§{x∈ X : g^{-1}x ≠ x}* = \bigcup_{n<ω} A_n \symdiff gA_n.
	\end{equation*} 
	As $G \actson X$ is pointwise free, the set on the left has full measure, so there must be an $n$ such that $μ(A_n \symdiff gA_n) > 0$.
\end{proof}

Recall that a \emph{non-singular transformation} $σ$ of $X$ is a bi-measurable bijection of $X$ such that $σ_*μ = μ$, i.e., $μ(A) = 0$ if and only if $μ(σ^{-1}A) =0$ for all $A∈£F$. The non-singular transformations on $X$ form a group that is denoted by $\aut^*(X,£F,μ)$.
In the remainder of this section, we assume that the action $G \actson (X,£F,μ)$ is by non-singular transformations.

Notice that when the action $G \actson X$ is non-singular, we may replace the condition $μ(gB \symdiff B) ≠ 0$ with $gB ∩ B = ∅$ in the definition of setwise freeness. In fact, if $μ(gB \symdiff B) ≠ 0$, then $μ(B\setdiff gB) > 0$ or $μ(gB\setdiff B) > 0$. In the first case we may choose $B' = B \setdiff gB$, and thus $μ(B') > 0$ and $B'∩gB'=∅$. The second case, on the other hand, is equivalent to $μ(B\setdiff g^{-1}B) > 0$ by non singularity, so we may choose $B' = B\setdiff g^{-1}B$ and again $B'$ has positive measure and is disjoint from $gB'$.

\begin{defin}
	For any $H⊆G$ and $A∈£F$, we say that $A$ is an \emph{$H$-set} if the images $§{hA: h∈H}$ of $A$ under $H$ are pairwise disjoint. 
\end{defin}

\begin{lemma}
	If $G$ acts non-singularly and setwise freely on $X$, then for any finite $H⊆G$ and any $A∈£F$ of positive measure, there is an $H$-set $B⊆A$ of positive measure.
\end{lemma}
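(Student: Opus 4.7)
The plan is to reduce the $H$-set condition to finitely many simpler pairwise disjointness conditions, then iterate the reformulated setwise freeness to shrink $A$ finitely many times.

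First, I would observe that $B$ is an $H$-set if and only if $gB \cap B = \emptyset$ for every $g$ in the finite set $K := H^{-1}H \setminus \{1_G\} \subseteq G \setminus \{1_G\}$. Indeed, for distinct $h,h' \in H$, since each group element acts as a bijection,
\begin{equation*}
  hB \cap h'B = h\bigl(B \cap (h^{-1}h')B\bigr),
\end{equation*}
and this is empty exactly when $B \cap (h^{-1}h')B = \emptyset$, with $h^{-1}h' \in K$.

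Next, I would enumerate $K = \{g_1,\dots,g_n\}$ and build a decreasing chain $A = A_0 \supseteq A_1 \supseteq \cdots \supseteq A_n$ of positive-measure measurable sets so that $g_i A_i \cap A_i = \emptyset$ for every $1 \le i \le n$. At step $i$, I apply setwise freeness to $g_i$ and $A_{i-1}$: using the reformulation noted right before the statement (which is legitimate because the action is non-singular), I obtain a positive-measure $A_i \subseteq A_{i-1}$ with $g_i A_i \cap A_i = \emptyset$. The key point is that this disjointness property persists under further restriction: if $A_j \subseteq A_i$ and $g_i A_i \cap A_i = \emptyset$, then a fortiori $g_i A_j \cap A_j = \emptyset$. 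Consequently the final set $B := A_n$ has positive measure and satisfies $g B \cap B = \emptyset$ for every $g \in K$, hence is an $H$-set contained in $A$.

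There is no real obstacle here; the proof is a finite induction and the only ingredients are the reformulated setwise freeness and the elementary translation of the $H$-set condition into the family of two-element disjointness conditions indexed by $K$. The mild subtlety worth flagging is simply that we do use the non-singularity hypothesis in an essential way, since it is what upgrades the symmetric-difference form of setwise freeness into the clean statement $gB \cap B = \emptyset$ that makes the induction go through.
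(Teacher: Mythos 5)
Your proof is correct and is essentially the paper's argument: both iterate the non-singular reformulation of setwise freeness (the form $gB \cap B = \emptyset$) to shrink $A$ finitely many times, noting that each disjointness condition survives further shrinking. The only difference is bookkeeping: you index the shrinking steps by $H^{-1}H \setminus \{1_G\}$, which makes explicit the several pairwise-disjointness conditions that the paper's induction on $\lvert H \rvert$ (adjoining one $h \in H$ per step and invoking freeness once) leaves compressed in its inductive step.
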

\begin{proof}By induction on the number of elements of $H$. If $H=∅$, just take $B= A$, otherwise take an $h∈H$, and let $A'⊆A$ be an $(H\setdiff§{h})$-set of positive measure. As noted above, by setwise freeness and non-singularity, there is some $B ⊆ A'$ of positive measure which is a $§{h}$-set, and thus an $H$-set.
\end{proof}

The following corollary generalises \cite[II.\S 2 Lemma 5]{o-w} to non-singular actions and arbitrary probability spaces.

\begin{lemma}[partition in $H$-sets]\label{multicover partition}
	If $G$ acts non-singularly and setwise freely on $X$, then, for any finite $H⊆G$, there exists a partition $(A_n)_{n∈\nn}$ of $X$ into $H$-sets. Moreover, the family $§{HA_n}_n$ is a multicover of $X$ with $\card{H}$ layers and no error.
\end{lemma}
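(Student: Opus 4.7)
The plan is to construct the partition by a maximality argument based on the previous lemma, and then verify the multicover properties by a direct count.

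First, I would consider the collection of all countable pairwise disjoint families $(B_n)_n$ of positive-measure $H$-sets, ordered by inclusion. Unions of chains remain countable disjoint families, since only countably many pairwise disjoint positive-measure sets can fit in a probability space, so Zorn's lemma yields a maximal such family. The key step is to show that the complement $R = X \setdiff \bigcup_n B_n$ must be null: otherwise, the previous lemma applied to $R$ would produce a positive-measure $H$-set inside $R$, and adjoining it to the family would contradict maximality. Setting $A_0 = B_0 \cup R$ and $A_n = B_n$ for $n \geq 1$ then gives the desired partition of $X$ into $H$-sets, since $A_0$ remains an $H$-set modulo a null set (using non-singularity to ensure $hR$ is null for each $h \in H$), which is all that is needed for the measure-theoretic conclusions.

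For the multicover conditions, a pointwise count handles the layer bound: for every $x \in X$,
\[
\sum_n \charfun{HA_n}(x) \;=\; \card{\{n \in \nn : x \in HA_n\}} \;\leq\; \card{H},
\]
because $x \in HA_n$ iff $h^{-1} x \in A_n$ for some $h \in H$, and since the $A_n$'s partition $X$, each $h \in H$ determines at most one such index $n$. For the no-error condition, each $A_n$ being an $H$-set means $HA_n = \bigsqcup_{h \in H} hA_n$; swapping summations,
\[
\sum_n \mu(HA_n) \;=\; \sum_{h \in H} \sum_n \mu(hA_n) \;=\; \sum_{h \in H} \mu(hX) \;=\; \card{H},
\]
exploiting that each $h \in H$ is a bijection of $X$, so $hX = X$ and $\mu(hX) = \mu(X) = 1$.

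I do not foresee any serious obstacle. The only delicate point is the Zorn's lemma step, where a maximal disjoint family could a priori be uncountable, but the probability measure automatically restricts the set of positive-measure members to be at most countable. A more constructive alternative would be a greedy exhaustion picking, at each stage, an $H$-set whose measure is at least half the supremum over $H$-sets disjoint from those already chosen, which forces the leftover measure to zero by the summability of the resulting measures.
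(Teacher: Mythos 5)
Your proof is correct and follows essentially the same route as the paper's: exhaust $X$ by positive-measure $H$-sets supplied by the preceding lemma (the paper does this by transfinite recursion, you by Zorn's lemma or the greedy exhaustion, which is the same maximality idea), and then verify the multicover conditions by the same interchange of sums over $h\in H$ and $n$. Your explicit absorption of the null remainder into $A_0$, with the remark that non-singularity keeps its $H$-images null, is a slightly more careful treatment of a point the paper leaves implicit.
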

\begin{proof}
	We construct a sequence of $H$-sets $A_{α}$ of positive measure by recursion on countable ordinals $α$. Suppose that we already have $A_{β}$ for all $β$ less than some countable $α$, and let $B_{α} = X \setdiff \bigcup_{β<α} A_{β}$. If $μ(B_{α})=0$, we can stop. If not, by the previous lemma, there is an $H$-set $A_{α}⊆B_{α}$ of positive measure. Notice that this construction must stop at some countable $α$, since $X$ is of finite measure. Up to re-indexing the sequence $(A_{β})_{β<α}$, we have thus a partition $(A_n)_{n∈\nn}$ of $X$ into $H$-sets.
	
	For the second part of the statement, it suffices to notice that
	\begin{equation*}
		\sum_{n∈\nn} \charfun{HA_n} 
		= \sum_{n∈\nn} \sum_{h∈H} \charfun{hA_n} 
		= \sum_{h∈H} \charfun{h\bigcup_n\!A_n} 
		= \card{H}
	\end{equation*} 
	and the same with $\charfun{(\,⋅\,)}$ replaced by $μ(\,⋅\,)$.
\end{proof}

Suppose now that $G$ is a countable group.

\begin{defin}
	If  $H$ and $K$ are finite subsets of $G$, we call the \emph{$K$-boundary} of $H$ the set $\partial_{K} H$ of those $g∈G$ such that $Kg$ contains points both in $H$ and in $G\setdiff H$. We say that the set $\interior_K H = H \setdiff \partial_K H$ is the \emph{$K$-interior} of $H$. 
\end{defin}

\begin{defin}
	A finite subset $H⊆G$ whose $K$-boundary has fewer than $ε\, \card{H}$ points is said to be \emph{$(K,ε)$-invariant}.
	We say that $G$ is \emph{amenable} if for any $ε>0$ and each finite $K⊆G$, there is a finite $(K,ε)$-invariant $H⊆G$.
\end{defin}

\begin{lemma}\label{stay whithin boundaries}
	Let $H$ and $K$ be finite subsets of $G$. If $K$ is symmetric and contains the identity, then
	\begin{equation*}
	KH = H ∪ \boundary_K H.
	\end{equation*}
\end{lemma}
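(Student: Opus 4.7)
The plan is to prove the two inclusions separately, checking where each hypothesis on $K$ enters.

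For the inclusion $H \cup \partial_K H \subseteq KH$: since $1_G \in K$, every $h \in H$ equals $1_G \cdot h$, hence lies in $KH$. For $g \in \partial_K H$, the defining property of the $K$-boundary gives some $k \in K$ with $kg \in H$; writing $h = kg$ and using the symmetry of $K$ (so $k^{-1} \in K$) yields $g = k^{-1} h \in KH$.

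For the reverse inclusion $KH \subseteq H \cup \partial_K H$: take $g = kh \in KH$ with $k \in K$, $h \in H$, and assume $g \notin H$. Then $Kg$ contains $g$ itself (since $1_G \in K$), which lies in $G \setminus H$, and it also contains $k^{-1} g = h \in H$ (using symmetry of $K$ so that $k^{-1} \in K$). Hence $Kg$ meets both $H$ and its complement, which means precisely that $g \in \partial_K H$.

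Combining the two inclusions gives the equality. There is no real obstacle here — the statement is essentially a bookkeeping lemma — but it is worth noting that the two hypotheses on $K$ play symmetric roles: $1_G \in K$ is what lets us write an element of $H$ (resp.\ of $KH \setminus H$) as an element of $Kh$ (resp.\ of $Kg$), while the symmetry of $K$ is what lets us invert the relation $h = kg$ to recover $g \in Kh$ or $g \in KH$.
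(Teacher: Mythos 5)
Your proof is correct and follows essentially the same route as the paper's: the identity element gives $H \subseteq KH$ and the containment of $g$ in $Kg$, while symmetry of $K$ inverts the relation $kg \in H$ to place boundary points in $KH$ and to show $Kg$ meets $H$. The only difference is cosmetic — the paper phrases the "meets the complement" step as a small contradiction, whereas you observe directly that $g = 1_G\, g \in Kg$ lies outside $H$.
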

\begin{proof}
	We have $H⊆ KH$ because $1_G∈ K$, and $\boundary_K H ⊆ KH$ by symmetry of $K$, so $H ∪ \boundary_K H ⊆ KH$.
	For the other inclusion, suppose $kh∉ H$ for some $k∈K$ and $h∈H$. By symmetry, $Kkh \ni k^{-1} k h = h ∈ H$, so $Kkh$ has points in $H$. As $1_G ∈ K$, if $Kkh⊆ H$, we would have $kh = 1_G kh ∈ Kkh⊆ H$, contradicting the assumptions, so $Kkh$ must have points in $G\setdiff H$ too, thus implying that $kh ∈ \boundary_K H$.
\end{proof}

We say that a finite subset $T⊆G$ is a \emph{tile} of $G$ if there exists a set $C⊆G$ that makes the family $§{Tc: c∈C}$ a partition of $G$.

\begin{theo}\label{mono rokhlin}
	Let $G$ be a countable amenable group, and let $0<ε<1$ and $0<δ<ε^3/250$. Suppose that $G$ acts non-singularly and setwise freely on $X$, and that $K⊆G$ is finite, symmetric, and contains the identity.
	If $G$ admits a $(K,δ)$-invariant tile $T$, then there is a $T$-set $W⊆X$ such that $μ(TW) > 1- ε$ and $μ§((\boundary_KT)W) < ε$.
\end{theo}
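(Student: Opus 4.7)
The plan is to build $W$ in three stages: partition $X$ into small $T$-sets using \Cref{multicover partition}, filter out those indices whose $K$-boundary contribution is disproportionately large (a subtlety absent from the measure-preserving case), and extract a $T$-set from the remaining indices via a greedy construction.

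Applying \Cref{multicover partition} with $H = KT$ decomposes $X = \bigsqcup_{n \in \mathbb{N}} A_n$ into $KT$-sets, which by $T, \partial_K T \subseteq KT$ are simultaneously $T$-sets and $\partial_K T$-sets; a direct count then gives $\sum_n \mu(TA_n) = |T|$ (the multicover $\{TA_n\}_n$ has $|T|$ layers and no error) and $\sum_n \mu((\partial_K T)A_n) = |\partial_K T| < \delta|T|$. The new subtlety relative to the measure-preserving case is that the ratio $\mu((\partial_K T)A_n)/\mu(TA_n)$, automatically bounded by $|\partial_K T|/|T| < \delta$ under measure preservation, may now be arbitrarily large for individual $n$. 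I call an index $n$ \emph{good} when $\mu((\partial_K T)A_n) \leq \alpha\,\mu(TA_n)$ for a threshold $\alpha$ to be calibrated. A Markov-type bound then yields $\sum_{n \text{ bad}} \mu(TA_n) < \delta|T|/\alpha$; and since any $x \in X$ covered only by bad $TA_n$'s must be covered by all $|T|$ of the multicover's layers, the set of such $x$ has measure at most $\delta/\alpha$.

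Next I apply \Cref{finite subcover} to the sub-multicover indexed by good indices, producing a finite subfamily $(A_{n_1}, \dots, A_{n_N})$ whose images cover all but at most about $2/k + \delta/\alpha$ of $X$ and satisfy $\sum_i \mu(TA_{n_i}) \leq 2k$. From these I build $W$ greedily: set $B_1 = A_{n_1}$, $B_{i+1} = A_{n_{i+1}} \setminus T^{-1}T(B_1 \cup \dots \cup B_i)$, and $W = \bigsqcup_i B_i$. By construction the images $TB_i$ are pairwise disjoint, so $W$ is a $T$-set, and since $B_i \subseteq A_{n_i}$ with each $n_i$ good,
\begin{equation*}
\mu((\partial_K T)W) \leq \sum_i \mu((\partial_K T)A_{n_i}) \leq \alpha \sum_i \mu(TA_{n_i}) \leq 2k\alpha.
\end{equation*}
Calibrating $\alpha$ and $k$ so that each of $2k\alpha$, $\delta/\alpha$, and $2/k$ is smaller than, say, $\varepsilon/4$ imposes on $\delta$ exactly a bound of the form $\delta < c\varepsilon^3$, in line with the hypothesis $\delta < \varepsilon^3/250$.

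The main obstacle will be the lower bound $\mu(TW) > 1-\varepsilon$: unlike in the measure-preserving case one cannot invoke $\mu(TS) = |T|\mu(S)$ for a $T$-set $S$, so the greedy excisions need not translate into purely additive losses in $TW$. My strategy is an inductive comparison of $TW_i = \bigsqcup_{j \leq i} TB_j$ with the accumulated union $\bigcup_{j \leq i} TA_{n_j}$, using that each $A_{n_i}$ is itself a $T$-set, whence $T(A_{n_i} \setminus L) = TA_{n_i} \setminus TL$ for every $L \subseteq A_{n_i}$; the measure missed at step $i$ thus identifies with the portion of $TA_{n_i}$ already contained in $TW_{i-1}$, and these losses telescope to give $\mu(TW) \geq 1 - 2/k - \delta/\alpha > 1-\varepsilon$.
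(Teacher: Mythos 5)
Your setup (partitioning into $KT$-sets, the Markov-type filtering of indices whose $\partial_K T$-mass is disproportionate, and the extraction of a finite subfamily via \Cref{finite subcover}) is sound, and the filtering step is indeed the right way to handle non-singularity for the boundary estimate. The genuine gap is exactly where you locate "the main obstacle": the telescoping identity you propose is false. With $B_{i+1} = A_{n_{i+1}} \setminus T^{-1}T W_i$ and $L = A_{n_{i+1}} \cap T^{-1}TW_i$, the measure missed at step $i+1$ is $\mu(TL)$, and $TL = T(A_{n_{i+1}} \cap T^{-1}TW_i) \subseteq TT^{-1}TW_i$, which is \emph{not} contained in $TW_i$: if a single translate $tx$ of a point $x \in A_{n_{i+1}}$ collides with $TW_i$, your excision deletes the entire orbit $Tx$, so the other $\lvert T\rvert - 1$ translates --- which may lie in the as-yet-uncovered region --- are discarded and need never be recovered (the guarantee from \Cref{finite subcover} concerns $\bigcup_i TA_{n_i}$, not multiplicities, so a point covered by only one selected $TA_{n_i}$ is lost for good once that piece is excised). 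In the worst case a constant fraction of each $TA_{n_i}$ is wasted this way, independently of $k$, $\alpha$ and $\delta$, so no choice of parameters yields $\mu(TW) > 1-\varepsilon$. This overlap-loss problem is not a technicality: it is the core difficulty that the Ornstein--Weiss construction is designed to solve.

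The paper's proof handles it by interposing a second, far more invariant set: using amenability it picks $H$ that is $(S, \tfrac{1}{2k^3})$-invariant for $S = TT^{-1}$, partitions $X$ into $KTT^{-1}H$-sets $A_n$, and uses the tiling hypothesis only to pack translates $Tc$, $c \in C$, inside $H$ so that $TC$ covers the $S$-interior of $H$ (\Cref{stay whithin boundaries} enters here). The greedy disjointification is then run on whole $H$-towers: one removes from the old centre set those $x$ whose $T$-orbit meets the new tower $TCB_{n+1}$ and adds all of $CB_{n+1}$; the points lost from $TW_n$ are shown to lie in $SHB_{n+1} = (H \cup \partial_S H)B_{n+1}$, and since the $S$-interior of the new tower is re-covered by $TCB_{n+1}$, the genuine per-step loss sits in $(\partial_S H)B_{n+1}$, which is small relative to the tower precisely because $H$ is very $S$-invariant (and because towers with large boundary were filtered out beforehand, the step analogous to your good/bad split). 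Your construction, working with towers of shape $T$ alone, has no analogue of this boundary confinement --- $\partial_{TT^{-1}}T$ is typically all of $T$ and the $(K,\delta)$-invariance of $T$ says nothing about it --- so the approach cannot be repaired without reintroducing an auxiliary Følner set playing the role of $H$.
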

\begin{proof}
	As $\sqrt[3]{2δ} < ε/5$, we can find an integer $k$ satisfying 
	\begin{equation*}
		\frac{4}{ε} <k < \frac{1}{\sqrt[3]{2δ}}.
	\end{equation*}
	We then set $S= TT^{-1}$. As $G$ is amenable, there exists a finite $H⊆G$ which is $§(S, \frac{1}{2 k^{3}} )$-invariant. 
	By \Cref{multicover partition}, we may find a partition $§{A_n}_n$ of $X$ into $KTT^{-1}H$-sets, 
	and thus $§{HA_n}_{n∈\nn}$ is a multicover with $\card{H}$ layers and no error.  We shall call the sets $HA_n$ the \emph{towers} based on the $A_n$'s, and refer to the set $(\boundary_{S} H)A_n$ as the \emph{boundary} of $HA_{n}$. When we want to speak of sets of the form $TW$ or  $(\boundary_K T)W$, with $W$ a $T$-set, we will use the more explicit terms \emph{$T$-tower} and \emph{$K$-boundary}.

	As $T$ is a tile of $G$, there exists a set $C_0$ such that $§(Tc: c∈C)$ is a partition of $G$. As we are only interested in those $c∈C_0$ that translate $T$ inside of $H$, we will consider the subset $C = §{c∈C_0: Tc⊆ H}$.	Notice that $TC$ covers all of the $S$-interior of $H$. Indeed, if some $h∈H$ is not covered, then there are $t∈T$ and $c∈C_0\setdiff C$ such that $tc = h$. As $Tc\nsubseteq H$, there is a $t'∈T$ such that $t'c ∉ H$, but $c= t^{-1}h$, so $t't^{-1}h \notin H$, witnessing that $h ∈ \boundary_{S}H$.
	
	First we show that the sum of the measures of the boundaries is small compared to $\card H$. As $\boundary_{S} H ⊆ SH$ and the $A_n$'s are $SH$-sets, we have
	\begin{align}\label{small sum of boundary}
	\sum_{n∈\nn} μ§((\boundary_{S} H )A_n) &= \sum_{n∈\nn} \sum_{h∈\boundary_{S} H} μ(h A_n) \notag\\
	&=   \sum_{h∈\boundary_{S} H} \xunderbrace{\sum_{n∈\nn} μ(h A_n)}{= μ(hX) = 1} = \card{\boundary_{S} H} < \frac{1}{2k^{3}} \card{H}
	\end{align}
	Now we want to discard those towers whose boundary has large measure with respect to the total measure of the tower. These are the towers with indices in
	\begin{equation*}
	I= §{ n∈\nn : μ§((\boundary_{S} H) A_n) 
		⩾
		\frac{1}{k^2}\, μ(HA_n) }[Big].
	\end{equation*}
	The sum of the measures of these unwanted towers is small compared to $\card H$. In fact,
	\begin{equation*}
	\sum _{i∈I} μ(HA_i) 
	⩽
	k^2 \sum_{i∈I} μ§((\boundary_{S}H) A_i) ⩽ k^2 \sum_{n∈\nn} μ§((\boundary_{S}H) A_n) < 
	\frac{1}{2k} \card{H},
	\end{equation*}
	where the last inequality follows from \eqref{small sum of boundary}. 
	In the same fashion, we also remove the $T$-towers having large $K$-boundary, that is, those with indices in
	\begin{equation*}
	J = §{ n∈ \nn\setdiff I : μ§((\boundary_KT) CA_n) ⩾ \frac{2}{k^2}\, μ(TCA_n)  }.
	\end{equation*}
	The sum of the measures of these towers is
	\begin{align*}
	\sum _{j∈J} μ(HA_j) &⩽ \sum_{j∈J} μ(TCA_j) + \sum_{j∈J} μ§((\boundary_{S}H) A_j) \\
	&⩽\frac{k^2}{2} \sum_{n∈\nn} μ§((\boundary_K T) CA_n) + \sum_{j∈J} \frac{1}{k^2} μ(H A_j)\\
	&< \frac{k^2 \, δ}{2} \card{T} \card{C} + \frac{1}{k^2} \card{H}
	⩽ \frac{1}{4k} \card H + \frac{1}{k^2} \card{H} < \frac{1}{2k} \card{H},
	\end{align*}
	by the choice of $k$.
	This means that if we set $L = \nn \setdiff (I∪J)$, the family $§{HA_\ell}_{\ell∈L}$ is a multicover with $\card{H}$ layers and error $1/k$. We may thus apply \Cref{finite subcover} to find $B_0,…,B_N$, elements of $§{A_n}_n$ satisfying
	\begin{enumerate}[label=(\alph*)]
		\item \label{first bn} $\displaystyle μ§( \bigcup_{n⩽N} HB_n)[Big] ⩾ §(1-\frac 1 k)[Big]^2 > 1-\frac 2 k$,
		\item \label{second bn}$\displaystyle \sum_{n⩽N} μ(HB_n) ⩽ 2k$.
	\end{enumerate}
	In particular, this shows that the towers $HB_n$ cover all of $X$ but a small portion that measures less than $2/k$.

	We now use the same construction as in \cite[II.\S 2 Theorem 5]{o-w} and proceed recursively to define an increasing sequence of $T$-sets $W_0,…,W_N$, which will cover most of the union of the towers based on the $B_n$'s. We start with $W_0 = C B_0$. This is a $T$-set and, by what we said in the previous paragraph, it covers all of $HB_{0}$ except at most its boundary.
	
	Suppose now that we have constructed the $T$-set $W_n$ covering all the interiors of the towers $HB_0,…,HB_n$. To get $W_{n+1}$, we first remove all points $x$ from $W_n$ such that $Tx$ overlaps with $TC B_{n+1}$ and then we add all elements of $CB_{n+1}$. More precisely,
	\begin{equation*}
	W_{n+1} = §{ x∈W_n : Tx ∩ TCB_{n+1} = ∅ } ∪ CB_{n+1}.
	\end{equation*}
	This is again a $T$-set. We now show that in passing from $TW_n$ to $TW_{n+1}$ we have only discarded points in the boundary of $HB_{n+1}$. First notice that we cannot lose points lying in the interior of $HB_{n+1}$. In fact, since $TC$ covers all of the $S$-interior of $H$, we have
	\begin{equation*}
	(\interior_{S}\!H )B_{n+1} ⊆ TCB_{n+1} ⊆ TW_{n+1}.
	\end{equation*}
	The points the we have lost are of the form $tx$ for some $t∈T$, and $x∈ W_{n}$ such that $Tx$ overlaps with $TCB_{n+1} ⊆ HB_{n+1}$. This means that there are $s∈T$, $h∈H$ and $y∈B_{n+1}$ such that $s x = h y$, from which it follows that $tx =  ts^{-1} sx = ts^{-1} hy ∈ S H B_{n+1}$, which is precisely $(H ∪ \boundary_{S}H) B_{n+1}$ by \Cref{stay whithin boundaries}, but we have already shown that $tx$ cannot be in the interior, so it must be in the boundary of $HB_{n+1}$.
	
	Finally, we have constructed a $T$-set $W = W_N$ such that $TW$ covers at least the union of the interiors of the towers $HB_n$. 	
	What is left is the portion of $X$ not covered by these towers, which accounts for a total measure of less than $2/k$, as we have seen above, and the union of their boundaries, whose measure is bounded by
	\begin{equation*}
	\sum_{n⩽ N} μ§((\boundary_{S} H) B_n) ⩽ \frac 1{k^{2}} \sum_{n⩽ N} μ(H B_n) ⩽ \frac 2 k,
	\end{equation*}
	where the second inequality follows from \ref{second bn} and the first from the fact that we discarded those towers $HA_n$ that had large boundary. 
	We have thus $μ(TW) > 1-4/k> 1-ε$. Similarly, we check that the $K$-boundary of $TW$ is small:
	\begin{equation*}
	μ§((\boundary_KT)W) ⩽ \sum_{n⩽N} μ§((\boundary_K T) CB_n) ⩽ \frac{2}{k^{2} } \sum_{n⩽ N} μ(H B_n) ⩽ \frac{4}{k} < ε,
	\end{equation*}
	and this concludes the proof.
\end{proof}

This theorem is useful to us only when then group $G$ has a $(K,δ)$-invariant tile for any arbitrarily small $δ>0$ and each finite $K⊆G$, i.e., if $G$ is \emph{monotileable}. This is the case for instance when $G$ is some power of $\zz$. However, we do not know if every amenable group is monotileable. A recent result by Downarowicz, Huczek and Zhang shows that if we allow multiple tiles at once, then we can choose them arbitrarily invariant. This will enable us to extend the previous result to arbitrary amenable groups in a simple way.

\begin{defin}
	A finite family $§{T_1,…,T_n}$ of finite subsets of $G$ is called a \emph{tiling family} for $G$ if there exist subsets $C_1,…,C_n ⊆ G$ such that $§(T_i c : c ∈ C_i, i⩽ n)$ is a partition of $G$. 
	The sets $T_i$ are called the \emph{shapes} of the tiling and the $C_i$'s are the \emph{centre sets}.  
\end{defin}

\begin{theo}[{\cite[Theorem~4.3]{exact-tiling}}]\label{exact tiling}
	Let $G$ be a countable amenable group. For any $δ>0$ and any finite $K⊆G$, there exists a tiling family for $G$, whose shapes are all $(K,δ)$-invariant.
\end{theo}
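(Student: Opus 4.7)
My plan is to start from the Ornstein--Weiss $\varepsilon$-quasi-tiling theorem and upgrade it, via a dynamical compactness argument, to an honest (exact) tiling. Concretely, the OW theorem gives us, for any finite $K\subseteq G$ and any $\eta>0$, $(K,\delta)$-invariant shapes $T_1,\ldots,T_n$ with the property that every sufficiently invariant finite $F\subseteq G$ admits centre sets $C_i^F\subseteq G$ for which the translates $\{T_i c: c\in C_i^F,\, i\leq n\}$ are pairwise disjoint, sit inside $F$, and cover all of $F$ except a subset of size less than $\eta\lvert F\rvert$. The hard part is to close the gap between ``covers all but an $\eta$-fraction'' and ``covers $G$ exactly''.

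The first step is to fix a Følner sequence $(F_m)_m$ and, for each $m$ large enough, apply OW to $F_m$ to obtain a partial tiling of $G$ by translates of $T_1,\ldots,T_n$ that covers $F_m$ up to an $\eta_m$-fraction, with $\eta_m\to 0$. Encode each such partial tiling as a point $\omega_m\in\Omega:=(\{0,1\}^G)^n$, where the $i$-th coordinate of $\omega_m$ is the characteristic function of the centre set used for $T_i$. The space $\Omega$ is compact in the product topology, and the shift action of $G$ on $\Omega$ is continuous; extract, by compactness, a subsequential limit $\omega_\infty\in\Omega$. The limit object encodes a family $\{C_1,\ldots,C_n\}$ of subsets of $G$ whose associated translates $\{T_ic\}$ are still pairwise disjoint (disjointness is a closed condition), and the set $U:=G\setminus\bigsqcup_{i,c\in C_i} T_ic$ of uncovered points has upper Banach density zero, since its density along $F_m$ is at most $\eta_m\to 0$.

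The second step is to eliminate $U$ entirely. Pass to a minimal subsystem of the orbit closure $\overline{G\cdot\omega_\infty}\subseteq\Omega$: by amenability, this minimal system carries a shift-invariant probability measure $\mu$. The expected density of $U$ under $\mu$ equals its (upper) Banach density, which is zero; on the other hand, in a minimal system every nonempty clopen condition is satisfied on a set of positive $\mu$-measure. Conclude that the $\omega$'s in this minimal system have $U(\omega)=\emptyset$, yielding an exact tiling of $G$ by translates of the $T_i$'s.

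The main obstacle is exactly this last passage: replacing an approximate tiling with arbitrarily small residue by a tiling with no residue at all. There is no obvious combinatorial move that removes the leftover set while preserving both invariance of the shapes and the disjointness of the translates. The dynamical trick---viewing partial tilings as points in a shift space and exploiting minimality together with the fact that an amenable group action has invariant measures---is what makes the step go through. A secondary technical point is to ensure that the shapes in the final tiling really are the original $(K,\delta)$-invariant $T_i$'s, which holds because the shapes live in the fixed finite collection and only the centre sets $C_i$ get re-selected in the limit.
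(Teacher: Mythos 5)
First, a point of context: the paper does not prove this statement at all --- it is quoted from Downarowicz--Huczek--Zhang (the cited Theorem~4.3), so your proposal has to stand on its own as a proof of their theorem, and as it stands it does not.

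The gap is in the input you attribute to Ornstein--Weiss, and it sits exactly at the step you yourself identify as the crux. The OW quasi-tiling theorem gives, \emph{for each} $\varepsilon>0$, a finite family of $(K,\delta)$-invariant shapes $T_1,\dots,T_n$ (depending on $\varepsilon$) such that every sufficiently invariant finite set is covered up to an $\varepsilon$-fraction by translates that are only $\varepsilon$-disjoint, not pairwise disjoint. The disjointness defect is repairable (shrink each tile, enlarging the shape family to all subsets of the $T_i$ of proportion at least $1-\varepsilon$, which remain suitably invariant), but the second defect is fatal to your scheme: you need partial tilings of the $F_m$ with uncovered fraction $\eta_m\to 0$ \emph{with the shape family fixed}, since your compactness argument lives in the fixed space $(\{0,1\}^G)^n$ and the conclusion requires a single finite family of shapes. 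OW does not provide this --- the error $\varepsilon$ is tied to the family, and a fixed finite family that quasi-tiles arbitrarily invariant sets with error tending to $0$ is essentially the statement to be proved (given such a family, your invariant-measure limit would indeed finish; this is why DHZ is a genuinely hard theorem rather than a soft consequence of OW). If instead you let the family grow with $m$, the ambient shift space changes and a limit configuration may involve infinitely many distinct shapes, so it is not a tiling family. With the family fixed, the limit argument (done correctly via the empirical measures $\frac{1}{|F_m|}\sum_{g\in F_m}\delta_{g\omega_m}$ --- note that your claim that the pointwise limit $\omega_\infty$ has uncovered set of upper Banach density zero does not follow from the bound $\eta_m|F_m|$, since the uncovered part of $F_m$ can concentrate precisely in the finite window where convergence is tested) only yields an invariant measure with $\mu(\{\omega : 1_G \text{ uncovered}\})\leq\varepsilon$, i.e.\ almost every configuration is again an $\varepsilon$-quasi-tiling, not an exact tiling. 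The actual DHZ proof removes the residue by an iterative construction in which successive tilings are built from quasi-tilings whose tiles are modified near their boundaries to absorb the leftover set, so the final shapes are controlled perturbations of F\o lner sets (still $(K,\delta)$-invariant) rather than the original OW shapes; your closing assertion that ``the shapes live in the fixed finite collection and only the centre sets get re-selected in the limit'' is exactly the assumption that fails.
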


We can now adapt the proof of \Cref{mono rokhlin} to the case of a tiling family $£T$ with shapes $T_1,…,T_\ell$ and centre sets $\smash{C_0^{(1)}},…,\smash{C_0^{(\ell)}}$. We start by choosing an integer $k$ in the same way, then we set $S = \bigcup_{i⩽\ell} T_i^{\vphantom{-1}}T_i^{-1}$, and find $H⊆G$ which is $§(S, \frac{1}{2 k^{3}} )$-invariant as before. The sets $A_n$ are now $KT_1^{\vphantom{-1}}T_1^{-1} \cdots T_\ell^{\vphantom{-1}} T_\ell^{-1}$-sets, and we define $C_i = §{c∈ \smash{C_0^{(i)}} : T_i c ⊆ H}$ for each $i⩽\ell$, so the union $\bigcup_i T_iC_i$ still covers the $S$-interior of $H$.

When removing the towers with large boundaries, we define the index set $I$ the same way, whereas $J$ will now be the union of the sets
\begin{equation*}
J_i = §{ n∈ \nn\setdiff I : μ§((\boundary_KT_i) C_iA_n) ⩾ \frac{2}{k^2}\, μ(T_iC_iA_n)  }[Big].
\end{equation*}
In the final construction, for each $1⩽i⩽\ell$, we define $\smash{W_0^{(i)}} = C_iB_0$ and
\begin{equation*}
W_{n+1}^{(i)} = §{ x∈W_n^{(i)} : T_ix ∩  \bigcup_{j⩽\ell}T_jC_jB_{n+1} = ∅ }[Big] ∪ C_iB_{n+1},
\end{equation*}
so that $T_i W_{n+1}^{(i)}$ is disjoint from $T_j W_{n+1}^{(j)}$ for different $i$ and $j$. We then conclude in the same way, \textit{mutatis mutandis}, and thus obtain the following result.

\begin{theo}[decomposition of non-singular free actions]\label{amenable rokhlin lemma}
	Let $G$ be a countable amenable group, and let $0<ε<1$ and $0<δ<ε^3/250$. Suppose that $G$ acts non-singularly and setwise freely on $X$, and that $K⊆G$ is finite, symmetric, and contains the identity.
	If $G$ admits a tiling family $£T$ whose shapes are $(K,δ)$-invariant, then for each $T∈£T$, there exists a $T$-set $W_T⊆X$, and
	\begin{enumerate}
		\item\label{am rokh 1} if $T_1,T_2 ∈ £T$ are different, then $T_1W_{T_1}$ and $T_2W_{T_2}$ are disjoint,
		\item\label{am rokh 2} $μ§(\bigcup_{T∈£T} T W_T) > 1-ε$,
		\item $\sum_{T∈£T} μ§((\boundary_K T) W_T) < ε$.
	\end{enumerate}
\end{theo}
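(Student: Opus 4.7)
The plan is to imitate the proof of \Cref{mono rokhlin} essentially verbatim, only treating the shapes $T_1,\dots,T_\ell$ of the tiling family $£T$ in parallel; the author has already indicated the required modifications, and I will fill them in. First I fix $k$ with $4/ε<k<1/\sqrt[3]{2δ}$, set $S=\bigcup_{i\le\ell}T_iT_i^{-1}$ (which is symmetric and contains $1_G$ because each $T_i$ is non-empty), and pick a finite $(S,\tfrac{1}{2k^3})$-invariant $H\subseteq G$ using amenability. Applying \Cref{multicover partition} with the finite set $KT_1T_1^{-1}\cdots T_\ell T_\ell^{-1}\cdot H$ yields a partition $(A_n)_n$ of $X$ into sets $A_n$ whose translates under this big set are pairwise disjoint; in particular $§{HA_n}_n$ is a multicover with $\card H$ layers and no error, and moreover within each tower the sets $T_iC_iA_n$ and $T_jC_jA_n$ with $i\ne j$ are automatically disjoint, as are the translates of $(\boundary_KT_i)C_iA_n$ and $T_iC_iA_n$.

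Next I restrict the centre sets to $C_i=\{c\in C_0^{(i)}:T_ic\subseteq H\}$. The same argument as in \Cref{mono rokhlin} shows that $\bigcup_{i\le\ell}T_iC_i$ covers $\interior_SH$: if $h\in H$ is missed, then the unique $(i,t,c)$ with $tc=h$ has $c\notin C_i$, so some $t'\in T_i$ puts $t'c$ outside $H$, witnessing $h\in\boundary_SH$. I then discard two families of bad towers: the index set
\begin{equation*}
I=\bigl\{n\in\nn : μ((\boundary_SH)A_n)\ge \tfrac{1}{k^2}μ(HA_n)\bigr\},
\end{equation*}
whose total tower-measure is bounded by $\tfrac{1}{2k}\card H$ exactly as in the monotile proof (using \eqref{small sum of boundary} with the new $S$), and, for each $i\le\ell$,
\begin{equation*}
J_i=\bigl\{n\in\nn\setminus I: μ((\boundary_KT_i)C_iA_n)\ge \tfrac{2}{k^2}μ(T_iC_iA_n)\bigr\},
\end{equation*}
and set $J=\bigcup_iJ_i$. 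The estimate on $\sum_{j\in J}μ(HA_j)$ goes through: for each $i$, $\sum_n μ((\boundary_KT_i)C_iA_n)\le δ\card{T_i}\card{C_i}$ by $(K,δ)$-invariance, and summing over $i$ gives at most $δ\card H$ (since the $T_iC_i$'s partition a subset of $H$). Combining with the $I$-estimate, $\sum_{j\in J}μ(HA_j)<\tfrac{1}{2k}\card H$, so on $L=\nn\setminus(I\cup J)$ the family $\{HA_\ell\}_{\ell\in L}$ is a multicover with $\card H$ layers and error $1/k$, and \Cref{finite subcover} produces $B_0,\dots,B_N$ satisfying \ref{first bn} and \ref{second bn}.

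Now I perform the parallel recursive construction: set $W_0^{(i)}=C_iB_0$ and
\begin{equation*}
W_{n+1}^{(i)}=\bigl\{x\in W_n^{(i)} : T_ix\cap\textstyle\bigcup_{j\le\ell}T_jC_jB_{n+1}=∅\bigr\}\cup C_iB_{n+1}.
\end{equation*}
Each $W_n^{(i)}$ is a $T_i$-set because the $A_n$'s are $T_iT_i^{-1}$-sets, and by construction the sets $T_iW_{n+1}^{(i)}$ for varying $i$ are pairwise disjoint: old points in $W_n^{(i)}$ whose $T_i$-orbit meets any new tower $T_jC_jB_{n+1}$ have been pruned away, while the new additions $T_iC_iB_{n+1}$ are disjoint from $T_jC_jB_{n+1}$ for $j\ne i$ since $A_{n+1}$ is a $T_iT_i^{-1}T_jT_j^{-1}$-set. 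Exactly as in \Cref{mono rokhlin}, passing from $\bigcup_iT_iW_n^{(i)}$ to $\bigcup_iT_iW_{n+1}^{(i)}$ can only discard points in $(\boundary_SH)B_{n+1}$, using \Cref{stay whithin boundaries}, so the final $W_{T_i}:=W_N^{(i)}$ satisfy conclusion~\ref{am rokh 1}, and $\bigcup_iT_iW_{T_i}$ contains $\bigcup_{n\le N}(\interior_SH)B_n$.

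The final measure bookkeeping gives the remaining two conclusions. The complement of $\bigcup_iT_iW_{T_i}$ is covered by $X\setminus\bigcup_{n\le N}HB_n$ (of measure $<2/k$ by \ref{first bn}) together with $\bigcup_{n\le N}(\boundary_SH)B_n$ (of measure at most $\tfrac{1}{k^2}\sum_{n\le N}μ(HB_n)\le 2/k$ by the removal of $I$ and by \ref{second bn}), so $μ(\bigcup_iT_iW_{T_i})>1-4/k>1-ε$, proving~\ref{am rokh 2}. For the boundary bound, since each discarded $J_i$ contributed little and on the surviving towers $μ((\boundary_KT_i)C_iB_n)\le\tfrac{2}{k^2}μ(T_iC_iB_n)$, summing over $i\le\ell$ and $n\le N$ yields
\begin{equation*}
\sum_{i\le\ell}μ((\boundary_KT_i)W_{T_i})\le \tfrac{2}{k^2}\sum_{n\le N}μ(HB_n)\le \tfrac{4}{k}<ε,
\end{equation*}
where the middle step uses that the $T_iC_iB_n$'s fit inside the disjoint towers $HB_n$. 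The main subtlety I expect is the last one: one must be sure that summing the $K$-boundaries across all shapes $i$ and all tower indices $n$ still telescopes against a single $\sum μ(HB_n)$; this works precisely because for each fixed $n$, the sets $\{T_iC_iB_n\}_{i\le\ell}$ are pairwise disjoint subsets of $HB_n$, which follows from the choice of $A_n$ as a $T_iT_i^{-1}T_jT_j^{-1}$-set.
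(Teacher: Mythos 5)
Your proposal follows the intended adaptation of \Cref{mono rokhlin} step by step, and most of it is sound: the choice of $k$, $S$ and $H$, the partition into sets whose translates under $KT_1T_1^{-1}\cdots T_\ell T_\ell^{-1}H$ are disjoint, the coverage of the $S$-interior of $H$ by $\bigcup_i T_iC_i$, the parallel recursion defining the $W_n^{(i)}$, and the two final estimates. The one step that is genuinely not routine is exactly the one you assert without proof: that $\sum_{j\in J}\mu(HA_j)<\frac{1}{2k}|H|$ when $J=\bigcup_i J_i$ and each $J_i$ is defined by the per-shape ratio $\mu((\partial_K T_i)C_iA_n)\ge\frac{2}{k^2}\mu(T_iC_iA_n)$. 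In the single-tile case the discard estimate uses $\mu(HA_j)\le\mu(TCA_j)+\mu((\partial_S H)A_j)$ together with the fact that for $j\in J$ the unique sub-tower satisfies $\mu(TCA_j)\le\frac{k^2}{2}\mu((\partial_K T)CA_j)$. Here one has $\mu(HA_j)\le\sum_{i'}\mu(T_{i'}C_{i'}A_j)+\mu((\partial_S H)A_j)$, but $j\in J$ only yields the ratio inequality for those shapes $i'$ with $j\in J_{i'}$; the remaining sub-towers of the discarded tower are not controlled by any boundary term, and in the non-singular setting they may carry almost all of $\mu(HA_j)$, since the measures of the translates $hA_j$, $h\in H$, need not be comparable to counting. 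So the counting bounds $\sum_n\mu((\partial_K T_i)C_iA_n)\le\delta|T_i||C_i|$, which are all you invoke, do not by themselves yield the claimed bound, and the chain of inequalities you implicitly rely on does not close. (A symptom of the problem: if some $C_i$ happens to be empty, the non-strict inequality places every $n\notin I$ into $J_i$, and the ``discarded'' mass is all of $|H|$.)

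The repair is small but necessary: replace the per-shape sets $J_i$ by a single exceptional set defined by the aggregated ratio, $J=\{\,n\in\nn\setminus I:\ \sum_i\mu((\partial_K T_i)C_iA_n)\ge\frac{2}{k^2}\sum_i\mu(T_iC_iA_n)\,\}$. Then for $j\in J$ one has $\sum_i\mu(T_iC_iA_j)\le\frac{k^2}{2}\sum_i\mu((\partial_K T_i)C_iA_j)$, and since $\sum_i\sum_n\mu((\partial_K T_i)C_iA_n)\le\delta|H|$ (your estimate), the discard bound runs exactly as in the single-tile proof, giving $\sum_{j\in J}\mu(HA_j)<\frac{1}{4k}|H|+\frac{1}{k^2}|H|<\frac{1}{2k}|H|$. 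For the surviving indices the aggregated condition still gives $\sum_i\mu((\partial_K T_i)C_iB_n)<\frac{2}{k^2}\mu(HB_n)$, which telescopes against $\sum_{n\le N}\mu(HB_n)\le 2k$ with no factor of $\ell$, because for each $n$ the sets $T_iC_iB_n$ are pairwise disjoint subsets of $HB_n$. Note, incidentally, that this last disjointness comes from the tiling being a partition of $G$ (so the sets $T_ic$, $c\in C_i$, are pairwise disjoint subsets of $H$) together with $B_n$ being an $H$-set, not from $B_n$ being a $T_iT_i^{-1}T_jT_j^{-1}$-set as you state; the claim is true, but for that reason.
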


Notice that the invariance of the shapes is only used to ensure that the $K$-boundaries are small. You can see this by taking $K = §{1_G}$ and any $δ$: in this case invariance of the shapes and smallness of the boundaries are trivially satisfied, but we still have the $T$-sets covering almost all of $X$.

\begin{obs}\label{stay within image boundaries}
	Suppose $g∈ \boundary_K T \setdiff T$, for some $T∈£T$. Then $gW_T$ is included in the complement of $\bigcup_{T∈£T} (\interior_K T) W_T$. To see this, take $S∈ £T$, possibly equal to $T$, and $s∈ \interior_K S$, and suppose $μ(gW_T ∩ s W_S) >0$. As $g∈ \boundary_K T$, there is some $k∈K$ such that $kg ∈ T$, but then $μ(kgW_T ∩ ks W_S) >0$ and $ks ∈ S$. This is only possible if $T=S$ and $g = s$, which is not the case. 
\end{obs}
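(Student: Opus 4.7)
The plan is to argue by contradiction, essentially along the lines already sketched in the statement. Suppose, for contradiction, that there exist $S ∈ £T$ and $s ∈ \interior_K S$ with $μ(gW_T ∩ sW_S) > 0$; my goal is to derive the equality $g = s$, which contradicts $g ∉ T$ and $s ∈ T$.

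First I would exploit the hypothesis $g ∈ \boundary_K T$: by the very definition of the $K$-boundary, $Kg$ meets both $T$ and $G \setdiff T$, so there is some $k ∈ K$ with $kg ∈ T$. Next, I would verify that the same $k$ sends $s$ into $S$. Since $s ∈ \interior_K S = S \setdiff \boundary_K S$ and $1_G ∈ K$, the element $s$ itself lies in $Ks ∩ S$, which is therefore non-empty; being outside the $K$-boundary of $S$ then forces $Ks ⊆ S$, and in particular $ks ∈ S$.

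With both $kg ∈ T$ and $ks ∈ S$ in hand, I would act by $k$ on the inequality $μ(gW_T ∩ sW_S) > 0$: non-singularity of the action ensures that positivity of measure is preserved, so $μ(kgW_T ∩ ksW_S) > 0$. But $kgW_T ⊆ TW_T$ and $ksW_S ⊆ SW_S$, so \Cref{amenable rokhlin lemma}.\ref{am rokh 1} forces $T = S$. Once $T = S$, both $kg$ and $ks$ lie in $T$ and $W_T$ is a $T$-set, so distinct translates of $W_T$ by elements of $T$ are pairwise disjoint; the positive overlap therefore forces $kg = ks$, whence $g = s$. This contradicts $g ∈ \boundary_K T \setdiff T$ together with $s ∈ \interior_K T ⊆ T$.

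There is no substantive obstacle here: the argument is a tight unwinding of the definitions, using only non-singularity of the $G$-action, the assumption $1_G ∈ K$ (which is what promotes $s ∈ \interior_K S$ to the stronger statement $Ks ⊆ S$), and the two separation properties of the sets $W_T$ supplied by \Cref{amenable rokhlin lemma} — disjointness of $T_1W_{T_1}$ and $T_2W_{T_2}$ for distinct shapes, and the $T$-set property within each shape. The only mildly delicate step is the passage from “$s$ lies in the $K$-interior of $S$” to “$ks \in S$”, which is where the interior hypothesis is used essentially.
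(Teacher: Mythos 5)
Your proof is correct and follows exactly the same route as the paper's own argument: translate by the $k\in K$ witnessing $kg\in T$, use non-singularity to preserve positivity of measure, and invoke the two disjointness properties of \Cref{amenable rokhlin lemma} to force $T=S$ and $g=s$. The only difference is that you spell out the step $Ks\subseteq S$ (hence $ks\in S$) from $s\in\interior_K S$ and $1_G\in K$, which the paper leaves implicit; this is a worthwhile clarification but not a different approach.
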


We now show that a non-singular action satisfying the decomposition property of \Cref{amenable rokhlin lemma} must be pointwise free. In the case of a countably separated space, this means that the decomposition property is equivalent to the action being (pointwise and setwise) free.

\begin{propos}
	Suppose that $G$ acts non-singularly on $X$ and that points 1 to 3 above are satisfied for all $K$, $ε$, $δ$, and $£T$ as in the previous theorem. Then the action $G \actson X$ is pointwise free.
\end{propos}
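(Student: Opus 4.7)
The plan is to argue by contradiction: I will assume that the action fails to be pointwise free, so some $g ∈ G \setdiff §{1_G}$ fixes a set $F_g = §{x ∈ X : gx = x}$ of positive measure. The chosen inputs are $K = §{1_G, g, g^{-1}}$ (finite, symmetric, containing the identity), and positive reals $ε,δ$ with $3ε < μ(F_g)$ and $δ < ε^3/250$. By \Cref{exact tiling}, $G$ admits a tiling family $£T$ all of whose shapes are $(K,δ)$-invariant, and applying the decomposition hypothesis to $£T$ produces $T$-sets $W_T$, $T ∈ £T$, satisfying the three conclusions listed in the previous theorem.

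The next step is to trap $F_g$ inside the $K$-interior of some single tower. Writing each $T$ as the disjoint union $\interior_K T ∪ (T ∩ \boundary_K T)$ and combining properties 2 and 3, I get that $\bigcup_{T ∈ £T} (\interior_K T) W_T$ has measure exceeding $1 - 2ε$, which is strictly greater than $1 - μ(F_g)$. Hence $F_g$ meets this union in positive measure; since $£T$ is finite and $(\interior_K T) W_T = \bigsqcup_{t ∈ \interior_K T} tW_T$ by the $T$-set property, a double pigeonhole yields a specific $T ∈ £T$ and a specific $t ∈ \interior_K T$ for which $μ(F_g ∩ tW_T) > 0$. Non-singularity then makes $B := t^{-1}(F_g ∩ tW_T) ⊆ W_T$ also of positive measure.

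For each $w ∈ B$, the equation $(gt)w = g(tw) = tw$ gives $tB = (gt)B$ as subsets of $X$. The crux is now the choice of $t$: since $g ∈ K$ and $t ∈ \interior_K T$, we have $gt ∈ Kt ⊆ T$, and $gt ≠ t$ because $g ≠ 1_G$, so the $T$-set property forces $tW_T$ and $(gt)W_T$ to be disjoint, hence $tB$ and $(gt)B$ to be disjoint as well, contradicting the equality $tB = (gt)B$. The main delicacy of the argument lies in the second paragraph, making the smallness of $ε$ with respect to $μ(F_g)$ and the smallness of the boundary terms $\sum_T μ((\boundary_K T)W_T)$ (the latter ensured by the invariance of the shapes) conspire so that $F_g$ genuinely survives, in positive measure, inside the interior of a single tower.
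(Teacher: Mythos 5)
Your proof is correct and follows essentially the same route as the paper: the same choice of $K=\{1_G,g,g^{-1}\}$ and of $\varepsilon$ small relative to $\mu(F_g)$, the same appeal to the exact-tiling theorem and to properties 1--3 to show the $K$-interiors of the towers cover more than $1-2\varepsilon$ of $X$, and the same key observation that $t\in\interior_K T$ forces $gt\in T$, so the $T$-set disjointness contradicts $g$ fixing points of $tW_T$. The only cosmetic difference is that you phrase the final contradiction via the positive-measure set $B$ rather than a single point, which changes nothing essential.
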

\begin{proof}
	Suppose the action is not pointwise free. Then, as $G$ is countable, there is some $g≠1_G$ such that $Y = §{x∈X: gx = x}$ has positive measure. Choose $K=§{g,g^{-1},1_G}$ and  $ε = μ(Y)/3$. Then apply \Cref{exact tiling} to find a tiling set $§{T_1,…,T_\ell}$ of $G$ whose shapes are all $(K,δ)$-invariant.
	
	By hypothesis, for each $i⩽\ell$, there is a $T_i$-set $W_i$ satisfying points 1 to 3 of the previous theorem. In particular, the union of the $K$-interiors of the $T_i$-towers $T_iW_i$ covers all of $X$ except at most a portion of measure $2ε < μ(Y)$. This means that there is a shape $T_i$, an element $t∈\interior_K T_i$, and a point $x∈ W_i$ such that $tx ∈ Y$. Now, $gt$ must belong to $T_i$, otherwise $t$ would be in the $K$-boundary of $T_i$. It follows that $gtx ≠ tx$, since $W_i$ is a $T_i$-set, but $g(tx) = tx$ by definition of $Y$, a contradiction.
\end{proof}

\section{Representation of group actions on $L_p$ lattices}\label{section : representation}

In this section we show that group actions on $L_p$ lattices can be represented as non-singular actions on measure spaces. This is based mainly on results in \cite{fremlin3i}.

We split the representation into two steps by introducing an intermediate kind of structures, namely measure algebras. 
Recall that a \emph{Boolean algebra} $(€A,+,⋅)$ is a ring with multiplicative identity $1_{€A}$, satisfying $a^2 = a$ for all $a∈€A$. Since we often think of Boolean algebras as arising from algebras of sets, we will generally use the alternative notations for the ring operations $a \symdiff b = a+b$ and $a ∩ b = a⋅b$. Following the same principle, we shall write $a ∪ b$ to mean $a+b+ab$, and $a \setdiff b$ for $a + ab$.

A Boolean algebra has a natural order defined by $a ⊆ b$ if $a ∩ b = a$, which makes it a Boolean lattice. 
Some of the results in this section depend on how much this order is complete. A Boolean algebra $€A$ is said to be \emph{Dedekind complete} if every non-empty subset of $€A$ has a least upper  bound (or a greatest lower bound). Sequential Dedekind completeness is defined the same way, except that we only need the countable subsets to have a least upper bound.

\begin{defin}\label{def measure algebra}
	A \emph{measure algebra} $(€A,μ)$ is the aggregate of a sequentially Dedekind complete Boolean algebra $€A$ and a function $μ\colon €A \to \ccint{0,\infty}$ such that
	\begin{enumerate}
		\item for all $a∈€A$, $μ(a) = 0$ if and only if $a = 0$,
		\item for all countable disjoint $B⊆€A$, $\sup_{b∈B} μ(b) = μ(\sup B)$.
	\end{enumerate}
\end{defin}

Given a measure space $(X,£F,μ)$, we can associate a measure algebra to it in a natural way. Consider the ideal $£N⊆£F$ of the negligible (measurable) sets of $(X,£F,μ)$, let $€A$ be the Boolean algebra quotient $£F/£N$, and define $\bar{μ}\colon €A \to \ccint{0,\infty}$ by $\bar{μ}§([A]_{£N}) = μ(A)$. Then, $(€A,\bar{μ})$ is a measure algebra \cite[321H]{fremlin3i}. We will refer to it as the \emph{measure algebra of $(X,£F,μ)$}.

A non-singular transformation $φ$ of $(X,£F,μ)$ induces a Boolean isomorphism $φ^{\natural}$ of $€A$ by 
\begin{equation}\label{induced on algebra}
	φ^{\natural}[A]_{£N} = [φ^{-1}A]_{£N}
\end{equation}
for every $A ∈ £F$. Moreover, the map sending $φ$ to $φ^{\natural}$ is a group homomorphism from $\aut^*(X,£F,μ)$ to $\aut(€A)$, the group of Boolean automorphisms of $€A$. Notice that the measure of the measure algebra plays no role here, since non-singularity simply translates to preservation of $0_{€A}$.

In general, if no additional property is required of the measure space, there may be an automorphism of $€A$ that is not induced by a non-singular transformation of the underlying measure space. As an example, suppose that $X$ is the disjoint union of two non-negligible sets $C$ and $U$, one countable and the other uncountable, and suppose also that $£F= §{∅,C,U,X}$. Then, the measure algebra of $(X,£F,μ)$ is $(£F,μ)$ and the map $σ$ swapping $C$ and $U$ and fixing $∅$ and $X$ is an automorphism of $£F$. However, there cannot be a bijection $φ\colon X \to X$ satisfying $φ^{-1}[U] = C$.

In order to complete the correspondence, we may replace the initial measure space with a “rectified” version of it, with the same measure algebra $€A$ and with the group of non-singular transformations isomorphic to $\aut{€A}$. This can be achieved by exploiting the Stone representation of Boolean algebras.

To this end, consider the Stone space $Z$ associated to $€A$, together with the \tsigma-algebra $Σ$ given by subsets of $Z$ of the form $C \symdiff M$, where $C$ is clopen and $M$ is meagre. Then $€A$ is isomorphic to the quotient of $Σ$ by the ideal $£M$ of the meagre subsets of $Z$. Moreover, if $μ$ is a measure on $€A$, we can define a measure on $(Z,Σ)$ by setting for all $a ∈ €A$, $ν(\widehat{a} \symdiff M) = μ(a)$, where $\widehat{a}$ is the clopen set associated to $a$. The interested reader may refer to \cite[314M, 321J]{fremlin3i} for more details.
In particular, the meagre sets are precisely the negligible sets, so $(€A,μ)$ is isomorphic to the measure algebra of $(Z,Σ,ν)$. We shall denote this measure space by $\stone(€A,μ)$.
The following result then completes the correspondence.

\begin{fact}[{\cite[312P]{fremlin3i}}]
	Let $€A$ be a Boolean algebra, with Stone space $Z$. There is a one-to-one correspondence between Boolean automorphisms $σ$ of $€A$ and homeomorphisms $φ$ of $Z$, given by the formula
	\begin{equation*}
		σ(a) = b \iff φ^{-1}[\widehat{a}] = \widehat{b},
	\end{equation*}
	for all $a,b∈ €A$ and corresponding clopen sets $\widehat{a}$ and $\widehat{b}$.
\end{fact}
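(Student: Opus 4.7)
The plan is to apply Stone duality: I will construct explicit inverse maps $σ \mapsto φ_σ$ and $φ \mapsto σ_φ$ between $\aut(€A)$ and the group of homeomorphisms of $Z$, then check that the formula in the statement holds by construction. The only prerequisites are the standard facts about the Stone space: its points are the ultrafilters of $€A$, the map $a \mapsto \widehat{a} = §{u ∈ Z : a ∈ u}$ is a Boolean isomorphism onto the algebra $\mathrm{Clop}(Z)$ of clopen subsets of $Z$, and the $\widehat{a}$'s form a basis for the topology.

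First I would define $φ_σ(u) = σ^{-1}[u]$, which is again an ultrafilter since $σ^{-1}$ preserves every Boolean operation. A direct unwinding shows
\begin{equation*}
	φ_σ^{-1}[\widehat{a}] = §{u ∈ Z : σ(a) ∈ u} = \widehat{σ(a)},
\end{equation*}
from which both the continuity of $φ_σ$ (basic clopens pull back to basic clopens) and the formula in the statement follow at once; applying the same construction to $σ^{-1}$ yields a continuous two-sided inverse, so $φ_σ$ is indeed a homeomorphism. In the opposite direction, given a homeomorphism $φ$ of $Z$, the map $C \mapsto φ^{-1}[C]$ is a Boolean automorphism of $\mathrm{Clop}(Z)$, so pulling it back through the isomorphism $€A \cong \mathrm{Clop}(Z)$ produces the unique $σ_φ ∈ \aut(€A)$ characterised by $\widehat{σ_φ(a)} = φ^{-1}[\widehat{a}]$.

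Checking that the two assignments are mutually inverse is then a formality: the equation $σ_{φ_σ} = σ$ is already recorded in the displayed computation above, while $φ_{σ_φ} = φ$ follows because the two maps agree on the preimage of every basic clopen, and clopen sets separate points in a Stone space. I do not anticipate any real obstacle in the argument; the only places requiring a small amount of care are the verification that $σ^{-1}[u]$ is indeed an ultrafilter (where one uses that $σ^{-1}$ is a \emph{Boolean} isomorphism, not merely an order isomorphism) and the implicit appeal to the Stone representation to guarantee that the $b$ in the displayed formula is uniquely determined by $φ^{-1}[\widehat{a}]$.
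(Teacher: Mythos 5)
Your proposal is correct: the ultrafilter-level construction $\varphi_\sigma(u)=\sigma^{-1}[u]$, the computation $\varphi_\sigma^{-1}[\widehat{a}]=\widehat{\sigma(a)}$, and the inverse assignment through $\mathrm{Clop}(Z)$ constitute the standard Stone-duality argument, and the two verifications you flag (that $\sigma^{-1}[u]$ is an ultrafilter, and that clopen sets separate points so $\varphi_{\sigma_\varphi}=\varphi$) are exactly the points that need care. The paper itself offers no proof, citing Fremlin [312P] instead, and your self-contained argument is precisely the proof underlying that citation, so there is no divergence to report.
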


This means that for each automorphism $σ$ of $€A$ there is exactly one homeomorphism $φ_{σ}$ of $Z$ satisfying $σ(a) = [φ_{σ}^{-1} \widehat{a}]_{£M}$ for all $a ∈ €A$, or equivalently $σ = (φ_{σ})^{\natural}$, using the notation defined before. Because $φ_{σ}$ is a homeomorphism, images and preimages of meagre sets are meagre, so $φ_{σ}$ is a non-singular transformation of $(Z,Σ,ν)$.
This shows that when the measure space is the Stone space of the algebra, the map $φ \mapsto φ^{\natural}$ is invertible, and so it is an isomorphism. We have thus proved the following result.

\begin{lemma}\label{aut isom algebra stone}
	Let $(€A,μ)$ be a measure algebra. Then 
	\begin{equation*}
		\aut^*(\stone(€A,μ)) \isom \aut(€A)
	\end{equation*}
	via the map $φ \mapsto φ^{\natural}$ defined in \eqref{induced on algebra}.
\end{lemma}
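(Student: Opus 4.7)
The plan is to package the observations in the paragraphs preceding the statement into a clean proof. The map $φ \mapsto φ^{\natural}$ is already known to be a group homomorphism $\aut^*(X,£F,μ) \to \aut(€A)$ for any measure space $(X,£F,μ)$, so everything reduces to showing that, for the particular space $(Z,Σ,ν) = \stone(€A,μ)$, this map is bijective.

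First, I would invoke \cite[312P]{fremlin3i} to get a bijection $σ \mapsto φ_σ$ between $\aut(€A)$ and the group of homeomorphisms of $Z$, satisfying $σ(a) = [φ_σ^{-1}\widehat a]_{£M}$ for every $a ∈ €A$. Next, I would check that every such $φ_σ$ is a genuine element of $\aut^*(Z,Σ,ν)$. Since $φ_σ$ is a homeomorphism, it maps meagre sets to meagre sets, and hence preserves $Σ = §{C \symdiff M : C \text{ clopen},\; M \text{ meagre}}$ in both directions; as the $ν$-negligible sets coincide with the meagre subsets of $Z$, it follows that $φ_σ$ is a non-singular bi-measurable bijection.

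Then I would compare the two constructions. Unfolding definitions, $(φ_σ)^{\natural}[\widehat a]_{£M} = [φ_σ^{-1}\widehat a]_{£M} = σ(a)$ for every $a ∈ €A$, and since every element of the measure algebra of $(Z,Σ,ν)$ is of the form $[\widehat a]_{£M}$ (by the isomorphism $€A \isom Σ/£M$ recalled above), this shows $(φ_σ)^{\natural} = σ$. Conversely, given $φ ∈ \aut^*(Z,Σ,ν)$, applying the Fact to the Boolean automorphism $φ^{\natural} ∈ \aut(€A)$ yields a homeomorphism $φ_{φ^{\natural}}$ of $Z$ such that $φ_{φ^{\natural}}^{-1}\widehat a = φ^{-1}\widehat a$ modulo meagre sets for every clopen $\widehat a$. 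Since two homeomorphisms of $Z$ that agree modulo meagre sets on every clopen set must coincide (the clopens form a base and their symmetric differences with meagre sets exhaust $Σ$), one concludes $φ = φ_{φ^{\natural}}$.

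Combining, the correspondence $σ \mapsto φ_σ$ is a two-sided inverse to $φ \mapsto φ^{\natural}$, so the latter is a bijective group homomorphism and therefore an isomorphism. The only potentially delicate point is the uniqueness statement in the last paragraph, i.e., that a non-singular transformation of $Z$ is determined by its action on the measure algebra; I would settle this by noting that any two homeomorphisms inducing the same Boolean automorphism differ by the identity of $€A$ and then using the Fact in the direction $\aut(€A) \to \text{Homeo}(Z)$, which guarantees at most one homeomorphism per algebra automorphism.
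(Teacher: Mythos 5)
Your forward direction is exactly the paper's argument: $φ \mapsto φ^{\natural}$ is already known to be a group homomorphism, Fremlin's 312P produces for each $σ ∈ \aut(€A)$ a unique homeomorphism $φ_σ$ of $Z$ with $σ = (φ_σ)^{\natural}$, and $φ_σ$ is non-singular because homeomorphisms preserve meagre sets and the meagre subsets of $Z$ are precisely the $ν$-negligible ones. Up to that point you have, as the paper does, a group-homomorphic section $σ \mapsto φ_σ$ of $φ \mapsto φ^{\natural}$ landing in $\aut^*(\stone(€A,μ))$, hence surjectivity.

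The gap is in your converse step, where you conclude $φ = φ_{φ^{\natural}}$ for an arbitrary $φ ∈ \aut^*(\stone(€A,μ))$. The uniqueness fact you invoke (two homeomorphisms agreeing modulo meagre sets on every clopen set coincide) is correct, but it does not apply: $φ$ is only a bi-measurable non-singular bijection, and nothing forces it to be a homeomorphism. Indeed, at the level of literal point maps the claim fails: if $€A$ is atomless, $Z$ has no isolated points, so any singleton is nowhere dense, hence meagre, hence lies in $Σ$ and is $ν$-negligible; swapping two points therefore gives a bi-measurable non-singular bijection that induces the identity on $€A$ yet is not the identity, so $φ \mapsto φ^{\natural}$ has nontrivial "kernel" and $σ \mapsto φ_σ$ cannot be a two-sided inverse pointwise. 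What the statement really requires is to regard $\aut^*$ modulo almost-everywhere (equivalently, modulo meagre) equality, i.e., to quotient out the transformations inducing the identity on $€A$ — which is the standard convention and the only way injectivity can hold; the paper itself is terse here, passing from the existence of the non-singular section $σ \mapsto φ_σ$ directly to invertibility, and for the later representation theorem only that homomorphic section is actually used, which both the paper and the first half of your argument provide. So either add the identification of a.e.-equal transformations explicitly, or drop the pointwise identity $φ = φ_{φ^{\natural}}$ and argue injectivity on the quotient; as written, that step is false.
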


We shall now turn to the representation of Banach-lattice automorphisms as Boolean-algebra automorphisms. Recall that a \emph{Riesz space}, or vector lattice, is a vector space $V$ together with a lattice order $⩽$ that is compatible with the vector structure, i.e., such that for any $x,y,z∈V$ with $x⩽ y$, and any positive $α∈\rr$, we have $x+z⩽y+z$ and $αx ⩽ αy$. The space of $£F$-measurable functions from $X$ to $\rr$ with the usual order is a Riesz space. We will denote it by $L^0(X,£F)$.

Fix $1⩽p<\infty$. Consider now a measure algebra $(€A,μ)$, with Stone representation $(Z,Σ,\tilde{μ})$, and define
\begin{align*}
	L^0(€A) &= L^0(Z,Σ),\\
	L^p(€A,μ) &= L^p(Z,Σ,\tilde{μ})
\end{align*}
The interested reader may find a direct construction of the spaces $L^0(€A)$ and $L^p(€A,μ)$ that does not make any reference to measure spaces in \cite{fremlin3i}, Sections 364 and 366.

The following fact shows that a concrete $L_p$ space only depends on the measure algebra of the underlying measure space.

\begin{fact}[{\cite[366B]{fremlin3i}}] \label{measure vs algebra}
	Let $(X,£F,μ)$ be a measure space, and $(€A,\bar{μ})$ its measure algebra. Then $L^0(X,£F)$ is isomorphic as a Riesz space to $L^0(€A)$, and this isomorphism $Φ$ restricts to a Banach-lattice isomorphism between $L^p(X,£F,μ)$ and $L^p(€A,\bar{μ})$. Moreover, if for any $A∈£F$, we call $A^\bullet$ the class of $A$ modulo negligible sets, then $Φ$ satisfies \begin{math}
		Φ(\charfun A) = \charfun{\widehat{A^\bullet}}.
	\end{math}
\end{fact}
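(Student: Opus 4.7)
The plan is to construct $Φ$ in three stages: first on characteristic functions, then by linearity on simple functions, and finally by a limiting procedure on the full spaces.

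I begin by setting $Φ(\charfun A) = \charfun{\widehat{A^\bullet}}$ for each $A ∈ £F$, and extending linearly to the sublattice $S$ of simple functions. Well-definedness on a.e.~equivalence classes follows because the map $A \mapsto \widehat{A^\bullet}$ factors through $£F/£N$ and lands in the clopen algebra of $Z$ modulo $£M$, yielding a Boolean isomorphism by the Stone representation and the identification of meagre with negligible sets. Pointwise lattice operations on simple functions reduce to Boolean operations on level sets, so $Φ|_S$ is automatically a Riesz-space homomorphism. Moreover, $\|\charfun A\|_p^p = μ(A) = \bar{μ}(A^\bullet) = \tilde{μ}(\widehat{A^\bullet}) = \|\charfun{\widehat{A^\bullet}}\|_p^p$, so $Φ|_S$ is also a $p$-norm isometry.

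For the $L^p$ case, since simple functions are $\|\,⋅\,\|_p$-dense in both $L^p(X,£F,μ)$ and $L^p(€A,\bar{μ})$, the isometry extends uniquely and surjectively to a Banach-lattice isomorphism; the lattice structure is preserved by $\|\,⋅\,\|_p$-continuity of $∨$ and $∧$. For $L^0$, I exploit the fact that every element is an order-limit of a monotone sequence of bounded functions (via truncation), each of which is in turn an order-limit of a monotone sequence of simple functions. Applying $Φ$ stage-wise and taking the corresponding order-limits in $L^0(€A)$, which is Dedekind $σ$-complete thanks to the sequential Dedekind completeness of $€A$, yields a Riesz-space isomorphism that extends the $L^p$ map and satisfies the stated formula on characteristic functions.

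The main obstacle is the $L^0$ part, because $L^0$ does not carry a canonical complete norm on which to base a density argument. The resolution is to replace norm-convergence by order-convergence, which is available precisely because $€A$ is sequentially Dedekind complete, and which is automatically compatible with the lattice operations, so that the extended $Φ$ remains a Riesz homomorphism without further work.
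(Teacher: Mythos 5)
The paper offers no proof of this Fact at all: it is imported verbatim from Fremlin \cite[366B]{fremlin3i}, so the only benchmark is Fremlin's own argument. Your treatment of the $L^p$ half is sound and is essentially the standard route: $A\mapsto\widehat{A^\bullet}$ is a Boolean homomorphism that kills null sets, the induced map on simple functions is a Riesz homomorphism and a $p$-norm isometry, and density of simple functions on both sides yields the Banach-lattice isomorphism.

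The gap is in the $L^0$ half. \enquote{Apply $Φ$ stage-wise and take the corresponding order-limits in $L^0(€A)$} presupposes that those order-limits exist, and sequential Dedekind completeness only provides suprema of \emph{order-bounded} increasing sequences. For an unbounded $f\geqslant 0$ the images $Φ(f\wedge n)$ of the truncations form an increasing sequence whose only natural upper bound is the element $Φ(f)$ you are in the middle of defining, so as written the step is circular. What actually makes it work is a lemma you never invoke: the quotient map $A\mapsto A^\bullet$ (and hence the identification of $€A$ with the clopen-modulo-meagre algebra of the Stone space $Z$) is sequentially order-continuous, i.e.\ it turns countable unions and intersections into countable suprema and infima. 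Applied to the decreasing sequence $\{x: f(x)>m\}$, whose intersection is empty, it gives $\inf_m \widehat{\{x: f(x)>m\}^\bullet}=0$, which says precisely that the pointwise supremum of the $Φ(f\wedge n)$ is finite outside a meagre subset of $Z$ --- the finiteness needed for the supremum to exist in $L^0(€A)$. The same continuity is what gives well-definedness on a.e.-classes, injectivity, and (running the argument backwards from $Z$) surjectivity, none of which your sketch addresses for $L^0$. Fremlin sidesteps the limiting procedure altogether: in Section 364 he defines the isomorphism on all of $L^0$ at once by matching resolutions of identity, sending the element of $€A$ determined by $\{x: f(x)>α\}$ to the corresponding component of the image, and 366B is then the observation that this map restricts to an isometry between the $L^p$ spaces; your $L^p$ paragraph recovers that last step, but the $L^0$ statement is where the real content lies.
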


This result could be strengthened: the Banach lattices above being isomorphic is actually equivalent to the measure spaces having measure algebras with isomorphic Dedekind completions.

We will now show a correspondence between Boolean automorphisms of $€A$ and Riesz automorphisms of $L^0(€A)$.
For any $a∈ €A$, we can write $χ\lowersub{a}$ to mean the class of the characteristic function $χ\lowersub{\widehat{a}}$ of the clopen $\widehat{a}⊆Z$ corresponding to $a$.

\begin{fact}[{\cite[364R]{fremlin3i}}]\label{aut of L0}
	Let $σ$ be an automorphism of $€A$. Then there exists a unique multiplicative Riesz automorphism $T_{σ}$ of $L^0(€A)$ satisfying $T_{σ} (χ\lowersub{a}) = χ\lowersub{σ(a)}$. In addition,
	\begin{enumerate}
		\item\label{ts point 1} if $h ∈ L^0(\rr)$, then $T_{σ}(h \comp u) = h \comp T_{σ}(u)$,
		\item the map $σ \mapsto T_{σ}$ defines an isomorphism between $\aut(€A)$ and $\aut(L^0(€A))$.
	\end{enumerate}
\end{fact}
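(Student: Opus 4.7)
The plan is to build $T_{σ}$ by pulling functions back along the Stone-space homeomorphism attached to $σ$, and then to invert the construction by giving an intrinsic characterisation of characteristic functions inside $L^0(€A)$. Fix $σ \in \aut(€A)$ and let $φ_{σ}$ be the homeomorphism of the Stone space $Z$ of $€A$ supplied by the fact quoted just before \Cref{aut isom algebra stone}. Since $φ_{σ}$ is a homeomorphism, it preserves the ideal of meagre subsets of $Z$, so the rule $u \mapsto u \comp φ_{σ}$ on $Σ$-measurable functions descends to a well-defined map $T_{σ} \colon L^0(€A) \to L^0(€A)$. Pointwise operations commute with composition, so $T_{σ}$ is at once multiplicative and a Riesz homomorphism. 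For every $a \in €A$,
\begin{equation*}
\charfun{\widehat{a}} \comp φ_{σ} = \charfun{φ_{σ}^{-1}\widehat{a}} = \charfun{\widehat{σ(a)}},
\end{equation*}
which establishes $T_{σ}(\charfun a) = \charfun{σ(a)}$. Applying the same construction to $σ^{-1}$ produces a two-sided inverse for $T_{σ}$, so $T_{σ}$ is an automorphism, and claim~\ref{ts point 1} drops out of the associativity of composition, $(h \comp u) \comp φ_{σ} = h \comp (u \comp φ_{σ})$.

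For the uniqueness of $T_{σ}$, I would use that $L^0(€A)$ is generated — in the sense of an order-dense sub-lattice — by the span of the characteristic functions $\charfun a$: every positive element is the order limit of an increasing sequence of simple functions, which exists by the sequential Dedekind completeness of $€A$. A multiplicative Riesz endomorphism is $\mathbb{R}$-linear and preserves such monotone order limits, so it is determined by its values on $\{\charfun a : a \in €A\}$.

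Claim~2 then splits into the usual two pieces. The map $σ \mapsto T_{σ}$ is a group homomorphism because both $T_{σ} T_{τ}$ and $T_{στ}$ send $\charfun a$ to $\charfun{στ(a)}$, and uniqueness forces equality. Injectivity is immediate, since $T_{σ} = \id$ forces $\charfun{σ(a)} = \charfun a$ for every $a \in €A$, hence $σ = \id$. For surjectivity, given a multiplicative Riesz automorphism $T$ of $L^0(€A)$, I would characterise $\{\charfun a : a \in €A\}$ intrinsically as the set of idempotents of the $f$-algebra $L^0(€A)$ lying between $0$ and $\charfun 1$, equivalently the elements $u$ with $0 \leqslant u \leqslant \charfun 1$ satisfying $u \wedge (\charfun 1 - u) = 0$. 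Multiplicativity forces $T$ to permute this distinguished set, and the Riesz-homomorphism property makes the induced map $σ \colon €A \to €A$ compatible with $\wedge$, $\vee$, and complementation; hence $σ$ is a Boolean automorphism, and $T = T_{σ}$ by uniqueness.

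The main obstacle I anticipate is the surjectivity step: one must verify that the induced $σ$ on $€A$ preserves countable suprema, not merely finite ones, which requires using the sequential order-continuity inherent in a Riesz automorphism together with the sequential Dedekind completeness of $€A$. Everything else is essentially bookkeeping via the Stone-space correspondence of \Cref{aut isom algebra stone}.
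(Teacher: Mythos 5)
The paper does not actually prove this statement: it is quoted verbatim from Fremlin's \emph{Measure Theory} (364R), so there is no internal argument to compare yours against. Your self-contained verification through the Stone representation is the natural one and is essentially sound: composition with the homeomorphism $\varphi_{\sigma}$ descends to the quotient because homeomorphisms preserve meagre sets; it is multiplicative and lattice-preserving; it sends $\chi_{a}$ to $\chi_{\sigma(a)}$ because $\varphi_{\sigma}^{-1}[\widehat{a}\,] = \widehat{\sigma(a)}$ in the correspondence the paper quotes from Fremlin 312P; and the same construction applied to $\sigma^{-1}$ inverts it. Your surjectivity step via the idempotents (equivalently, the components of the multiplicative unit $\chi_{1}$) is also the standard route, and the obstacle you anticipate there is not real: the induced map on $\mathfrak{A}$ is a \emph{bijective} Boolean homomorphism, hence an order isomorphism, and so it automatically preserves whatever suprema exist --- no separate argument about countable sups is needed. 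Your reading of item 2, with $\mathrm{Aut}(L^{0}(\mathfrak{A}))$ meaning multiplicative Riesz automorphisms, is also the right one (multiplication by a fixed strictly positive weight is a Riesz automorphism of $L^{0}$ that is not of the form $T_{\sigma}$, so multiplicativity cannot be dropped).

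One justification is misstated, although the conclusion you need survives. In the uniqueness step you assert that a multiplicative Riesz \emph{endomorphism} preserves monotone order limits; Riesz homomorphisms are not order continuous in general, so this cannot be taken for free. What saves you is that the uniqueness claim only quantifies over automorphisms: a bijective Riesz homomorphism has a Riesz-homomorphism inverse, hence is an order isomorphism and does carry suprema of increasing sequences to suprema. Since every positive element of $L^{0}(\mathfrak{A})$ is the supremum of an increasing sequence of simple elements (dyadic truncation; no completeness of $\mathfrak{A}$ is needed for this, contrary to your aside), two multiplicative Riesz automorphisms agreeing on all $\chi_{a}$ agree everywhere. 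State the uniqueness this way, and note that the same reading covers your use of uniqueness to get $T_{\sigma\tau} = T_{\sigma}T_{\tau}$, where both sides are indeed automorphisms.
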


In order to have a correspondence between Boolean automorphisms of $€A$ and  Banach-lattice automorphisms of $L^p(€A,μ)$, we need to include a factor that changes the density of the measure. In the finite measure case, this would be the Radon-Nikodym derivative of the pushforward measure. The following fact guarantees the existence of a generalised version of such derivative to semi-finite measures, provided that the Boolean algebra is Dedekind complete. As with measure spaces, a measure $μ$ for a Boolean algebra $€A$ is said to be  \emph{semi-finite} if for each $a∈€A$ with $μ(a) = \infty$, there is some $b⊆a$ such that $0<μ(b) < \infty$. Here we use the notation $\int_a u \dd μ$ to mean $\int_{\widehat{a}} u \dd \tilde{μ}$, where $\tilde{μ}$ the measure of the Stone representation of $(€A,μ)$.

\begin{fact}[{\cite[365T]{fremlin3i}}]\label{general Radon-Nikodym}
	If $€A$ is a Dedekind complete Boolean algebra and $μ,ν \colon €A \to \ccint{0,\infty}$ are semi-finite measures on $€A$, then there is a unique 
	$u ∈ L^0(€A)$ such that $ν(a) = \int_a u \dd μ$ for every $a∈€A$. Moreover, this element $u$ satisfies the following properties.	 
	\begin{enumerate}
		\item $u$ is strictly positive,
		\item for every $a∈€A$, $μ(a) = \int_a \frac{1}{u} \dd ν$,
		\item for every $v∈ L^1(€A,ν)$, $\int v \dd ν = \int vu \dd μ$.
	\end{enumerate}
\end{fact}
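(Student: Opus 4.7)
The plan is to reduce to the classical generalised Radon--Nikodym theorem on a concrete measure space via the Stone representation, and then derive properties (1)--(3) from the fact that both measures are faithful.

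First, I would pass to the Stone representations $(Z,\Sigma,\tilde{\mu})$ and $(Z,\Sigma,\tilde{\nu})$ of $(€A,\mu)$ and $(€A,\nu)$. Since $€A$ is Dedekind complete, both lifted measure spaces are \emph{localisable} in Fremlin's sense. By the defining axiom of a measure algebra, $\mu(a) = 0 \iff a = 0_{€A} \iff \nu(a) = 0$, so the $\tilde{\mu}$-negligible and $\tilde{\nu}$-negligible sets coincide (both equal the meagre ideal $£M$), and in particular $\tilde{\mu}$ and $\tilde{\nu}$ are mutually absolutely continuous.

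Next, I would invoke the generalised Radon--Nikodym theorem for localisable semi-finite measures to produce a measurable $u \colon Z \to [0,\infty)$ such that $\tilde{\nu}(A) = \int_A u \dd \tilde{\mu}$ for every $A \in \Sigma$. Uniqueness of $u$ (up to a.e.\ equality, i.e.\ as an element of $L^0(€A)$) follows from the classical argument: any two such densities must agree on every set of finite $\tilde{\mu}$-measure, and semi-finiteness together with localisability promote this to a.e.\ equality on all of $Z$. For strict positivity in (1), set $a = [\{u=0\}] \in €A$: then $\nu(a) = \int_a u \dd \mu = 0$, so by the measure algebra axiom $a = 0_{€A}$, meaning $u > 0$ a.e. Property (2) is obtained by swapping the roles of $\mu$ and $\nu$ to get some $w \in L^0(€A)$ with $\mu(a) = \int_a w \dd \nu$; combining the two identities gives $\mu(a) = \int_a wu \dd \mu$ for every $a$, whence $wu = 1$ by uniqueness, and $w = 1/u$. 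Property (3) is then the usual change-of-variables identity: it holds on characteristic functions $\charfun{a}$ by definition of $u$, extends to non-negative simple functions by linearity, to arbitrary non-negative measurable representatives by monotone convergence, and to general $v \in L^1(€A,\nu)$ by splitting into positive and negative parts.

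The main obstacle is really the generalised Radon--Nikodym theorem itself: in the $\sigma$-finite case it is classical, but extending it to localisable semi-finite measures requires a delicate gluing argument on fragments of finite measure, exploiting precisely the Dedekind completeness of $€A$ (which at the level of measure spaces is localisability) to assemble the locally defined densities into a globally defined one. Once that black box is in place, the remaining verifications are routine manipulations of $L^0$-classes.
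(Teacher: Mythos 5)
The paper does not prove this statement at all: it is imported as a Fact with the citation to Fremlin's 365T, so there is no internal argument to compare yours with, and any proof you give is necessarily a reconstruction of Fremlin's. Your outline is essentially correct, and the routine parts (uniqueness by testing on elements of finite measure for both measures, strict positivity from faithfulness of $\nu$, item (2) by symmetry plus uniqueness, item (3) by monotone convergence) go through; two small points are that finiteness of $u$ almost everywhere, i.e.\ $u\in L^0$ rather than a $[0,\infty]$-valued density, again needs semi-finiteness of $\nu$, and that your derivation of (2) already uses the change-of-variables identity for nonnegative functions, so that step should come first. The substantive caution concerns the black box: for a localisable $\tilde\mu$, absolute continuity of $\tilde\nu$ alone does \emph{not} yield a density, even for totally finite $\tilde\nu$ (consistently, a nonzero finite measure on a power set vanishing on singletons is absolutely continuous with respect to the localisable counting measure yet has no density). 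What makes your reduction work is the full symmetric hypothesis you actually have --- both measures semi-finite with the \emph{same} null ideal and the same Dedekind complete measure algebra --- and the proof in that form is precisely the gluing you allude to: elements on which both measures are finite are cofinal (semi-finiteness of each measure, used below an arbitrary nonzero element), the finite Radon--Nikodym theorem gives local densities, Dedekind completeness glues them, and complete additivity of a faithful measure over arbitrary disjoint families verifies the identity for every $a$. In other words, the black box in the form you need is Fremlin's 365T itself (or his treatment of indefinite-integral measures over localisable spaces), so a genuinely self-contained proof would have to carry out that gluing; as a reduction of the algebraic statement to its measure-space counterpart, your sketch is correct.
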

We will denote the element $u$ defined above by $\frac{\dd ν}{\dd μ}$, as usual. Notice also that, if $η$ is another semi-finite measure on $€A$, then $\frac{\dd μ}{\dd ν} ⋅ \frac{\dd ν}{\dd η} = \frac{\dd μ}{\dd η}$, just as in the \tsigma-finite case. 
Now, if $μ$ is a semi-finite measure on a complete Boolean algebra $€A$ and $σ$ is an automorphism of $€A$, then $μσ = μ \comp σ$ is also semi-finite, so that $ \frac{\dd μσ}{\dd μ}$ exists in $L^0(€A)$. 
Additionally, if $w ∈ L^1(€A, μσ)$, then
\begin{equation}\label{law of composition}
	\int w \dd μσ = \int T_{σ}(w) \dd μ,
\end{equation}
which can be shown as usual for simple functions and then extended by taking limits.
If $τ$ is another automorphism of $€A$, then, using the previous identity,
\begin{align*}
	 \int_a \frac{\dd μ στ}{\dd μτ} \dd μτ 
	 = μστ(a) 
	 &= \int_{τa} \frac{\dd μσ}{\dd μ} \dd μ \\
	 &= \int T_{τ}(\charfun{a})⋅ \frac{\dd μσ}{\dd μ} \dd μ 
	 = \int_a T_{τ^{-1}}\frac{\dd μσ}{\dd μ} \dd μτ,
\end{align*}
for all $a∈€A$, whence
\begin{equation}\label{rn derivative composition}
	T_{τ}§( \frac{\dd μ στ}{\dd μτ} ) = \frac{\dd μσ}{\dd μ},
\end{equation}
by \Cref{general Radon-Nikodym}.

We shall now recall some definitions and basic results concerning bands in Banach lattices; the interested reader may find more details on the subject in \cite[\S 1.2]{meyer-banach}. Let $E= L^p(€A,μ)$ be a Banach lattice, and let $A$ be a subset of $E$. The \emph{disjoint complement} of $A$ is defined by
\begin{equation*}
	A^{\perp} = §{ u ∈ E : \abs{u} \meet \abs{w} =0, \text{ for every } w∈A},
\end{equation*}
and the \emph{band generated} by $A$ is the double complement 
\begin{equation*}
	\band{A} = (A^{\perp})^{\perp}.
\end{equation*}
The sets $A^\perp$ and $\band{A}$ are vector sublattices of $E$, and $E = \band{A} \oplus A^{\perp}$. In particular, there is a linear projection $P_{\band{A}} \colon E \mapsto \band{A}$ that gives the component of an element in $\band{A}$. When $A=§{w}$, we will use the notation $\band{w}$ instead of $\band{§{w}}$, and define 
\begin{equation*}
	u \restr w = P_{\band{w}}(u).
\end{equation*}
Notice that if $T$ is a Riesz space automorphism, then $T(u\restr w) = Tu \restr Tw$.

Another relevant fact is that the collection of all bands of $E$, i.e., sublattices of the form $\band{A}$, together with lattice operations $B \meet C = B ∩ C$, $B \join C = \band{B + C}$ and $\neg B = B^{\perp}$, is a Dedekind-complete Boolean algebra, which is called the \emph{band algebra} of $E$. 
The following result enables us to identify the band algebra of $L^p(€A,μ)$ with the underlying algebra $€A$, under certain conditions.

\begin{fact}[{\cite[366X-b, 364O]{fremlin3i}}]	\label{band algebra isom}
	If $(€A,μ)$ is a Dedekind-complete and semi-finite measure algebra, then $€A$ is isomorphic to the band algebra of $L^p(€A,μ)$ via the map $a \mapsto \band{\charfun{a}}$.
\end{fact}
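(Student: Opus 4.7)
The plan is to define the map $\phi\colon €A \to \mathrm{Band}(L^p(€A,μ))$ by $\phi(a) = \band{\charfun{a}}$, and to prove it is a Boolean isomorphism in three stages: homomorphism, injectivity, and surjectivity. First, I would check that $\phi$ is a Boolean homomorphism. The identity $\charfun{a \cap b} = \charfun{a} \wedge \charfun{b}$ in $L^0(€A)$ together with the general fact that the band-generated operation commutes with lattice meets yields $\phi(a \cap b) = \phi(a) \wedge \phi(b)$. For complementation, $\charfun{a}$ and $\charfun{1 \setminus a}$ are disjoint in $L^0$, and semi-finiteness guarantees that every $u \in L^p$ orthogonal to $\charfun{a}$ actually lies in $\band{\charfun{1 \setminus a}}$; unions are then handled via de Morgan.

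Next I would prove injectivity. Suppose $a \neq b$, say $a \setminus b > 0$. Semi-finiteness yields some $c \leq a \setminus b$ with $0 < μ(c) < \infty$, so $\charfun{c} \in L^p(€A,μ)$. This element belongs to $\phi(a) = \band{\charfun{a}}$, but $\charfun{c} \wedge \charfun{b} = 0$, so $\charfun{c} \in \phi(b)^\perp$. Hence $\phi(a) \neq \phi(b)$.

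The main step is surjectivity. For any positive $u \in L^p$, its support $s(u) = \sup_n [\,|u| > 1/n\,]$ exists in $€A$ by Dedekind completeness, and a direct verification gives $\band{u} = \phi(s(u))$, so every principal band lies in the image of $\phi$. Given an arbitrary band $B \subseteq L^p$, I would pick a maximal disjoint family $\{u_i\}_{i \in I}$ of positive elements of $B$ by Zorn's lemma, and set $a = \sup_i s(u_i) \in €A$, which exists by Dedekind completeness. Then I would verify $B = \phi(a)$ by two inclusions: the inclusion $B \subseteq \phi(a)$ follows because every positive $v \in B$ has $\band{v} = \phi(s(v))$, and maximality of $\{u_i\}$ forces $s(v) \leq a$; conversely, for $v \in \phi(a)$ positive, semi-finiteness allows one to approximate $v$ by finite-measure truncations, each of which decomposes along the $s(u_i)$ and hence lies in $B$.

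The main obstacle is this final surjectivity step, where both hypotheses are genuinely required: Dedekind completeness is needed to form the support $s(u)$ and the supremum $a$ inside $€A$, whereas semi-finiteness is needed to guarantee enough integrable characteristic functions in $L^p$ to witness band membership. Without semi-finiteness, some elements of $€A$ could be entirely invisible from inside $L^p$, breaking both the injectivity argument and the decomposition in surjectivity; without Dedekind completeness, the suprema required to produce $a$ and the supports themselves need not exist.
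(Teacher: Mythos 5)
Your plan is essentially sound, but note that the paper does not prove this statement at all: it is quoted from Fremlin, where 364O identifies $€A$ with the band algebra of $L^0(€A)$ and 366X(b) transfers this to $L^p(€A,μ)$ using semi-finiteness. Your direct verification therefore replaces a two-step citation that factors through $L^0$, and it isolates the two hypotheses in exactly the right roles: semi-finiteness so that every nonzero element of $€A$ is witnessed by an integrable characteristic function (your injectivity step, and also the complementation step, where disjointness must be tested against $\band{\charfun{a}} ∩ L^p(€A,μ)$ rather than against $\charfun{a}$ itself, since $\charfun{a}$ need not lie in $L^p$ when $μ(a)=\infty$ --- this is the subtlety the paper flags right after the statement, and your remark that semi-finiteness is needed there is correct); and Dedekind completeness to form $a=\sup_i s(u_i)$ over a possibly uncountable maximal disjoint family.

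Two points in the surjectivity step deserve more than the sentence you give them, though both are routine. For $B ⊆ \phi(a)$: if $s(v) \setdiff a > 0$, you should pass to the component of $v$ on that piece --- a nonzero element of $B$, because bands are ideals --- which is disjoint from every $u_i$, contradicting maximality; that is what "forces $s(v) ⩽ a$". For $\phi(a) ⊆ B$: the decomposition of $v$ along the $s(u_i)$ has only countably many nonzero components, since the support of $v$ is \tsigma-finite ($v ∈ L^p$ gives $μ§[\abs{v}>1/n]$ finite for each $n$), so the disjoint quantities $\int_{s(u_i)} \abs{v}^p \dd μ$ are summable; each component lies in $\band{u_i} ⊆ B$, and you then need norm-closedness (equivalently, order-closedness) of the band $B$ to pass to the limit and conclude $v ∈ B$. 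Semi-finiteness of $μ$ is not what drives this last step, contrary to your phrasing; what is needed is the closedness of $B$ and the countable reduction just described. With these repairs the argument is complete.
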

Notice that even when $\charfun{a}$ does not belong to $L^p(€A,μ)$, we may consider its band $B$ in $L^0(€A)$, and $B∩ L^p(€A,μ)$ is indeed a band of $L^p(€A,μ)$, which justifies the notation $\band{\charfun{a}}$ used in the statement here above. Moreover, under the identification given by this fact, 
\begin{equation*}
	u \restr w = u ⋅\charfun{\band{w}}, 
\end{equation*}
for all $u,w ∈ L^p(€A,μ)$, with the product evaluated in $L^0(€A)$.
Another tool that we will use is the following immediate corollary of \cite[364L and 368C]{fremlin3i}. 

\begin{fact}\label{extension to measurable}
	If $(€A,μ)$ is a semi-finite measure algebra, then every Banach lattice automorphism of $L^p(€A,μ)$ extends uniquely to a Riesz space automorphism of $L^0(€A)$.
\end{fact}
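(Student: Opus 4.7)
The strategy is to push $T$ through the inclusion $L^p(€A,μ) ⊆ L^0(€A)$ via order-theoretic density; the two cited Fremlin results together package the mechanism, and below I sketch the underlying argument.

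First I would check that $L^p(€A,μ)$ is order-dense in $L^0(€A)$. Given any strictly positive $u ∈ L^0(€A)$, semi-finiteness of $(€A,μ)$ provides elements $a ∈ €A$ with $0 < μ(a) < \infty$ such that the truncations $u \meet n\charfun{a}$ all lie in $L^p(€A,μ)$ and have supremum $u$ in $L^0(€A)$ as $a$ and $n$ range over admissible values. This is the content of Fremlin 364L in the present setting.

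Next I would observe that any Banach lattice automorphism $T$ of $L^p(€A,μ)$ is automatically order-continuous. For a monotone, order-bounded net $(v_i)_i$ in $L^p$ with order supremum $v$, dominated convergence in the Stone representation of $€A$ gives $v_i \to v$ in $L^p$-norm; since $T$ is norm-continuous and a Riesz homomorphism, $T(v_i) \to T(v)$ monotonically in norm, and hence $T(v_i) \uparrow T(v)$ in order as well. The same applies to $T^{-1}$.

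With these ingredients, for $u ∈ L^0(€A)^+$ I would set
\begin{equation*}
	\tilde T(u) = \sup §{ T(v) : v ∈ L^p(€A,μ),\ 0 ⩽ v ⩽ u },
\end{equation*}
computed in $L^0(€A)$, and then extend $\tilde T$ by linearity to all of $L^0(€A)$. Order density ensures the supremum exists in the Dedekind-complete space $L^0(€A)$ and that $\tilde T$ restricts to $T$ on $L^p(€A,μ)$; performing the analogous construction for $T^{-1}$ produces an inverse, so $\tilde T$ is a Riesz automorphism of $L^0(€A)$. Uniqueness is immediate: two order-continuous Riesz extensions of $T$ agree on the order-dense subspace $L^p(€A,μ)$ and therefore coincide everywhere, which is the content of Fremlin 368C. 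The main technical point is the order-continuity verification, which crucially uses $p < \infty$; for $p = \infty$ monotone norm-bounded convergence would no longer imply norm convergence, and the extension would need a different justification.
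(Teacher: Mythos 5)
Your route is the one the paper intends with its citation of Fremlin: order-density of $L^p(€A,μ)$ in $L^0(€A)$ coming from semi-finiteness, order continuity of $T$, extension by suprema, uniqueness from order-density. The uniqueness step and the order-continuity step are essentially sound, although for nets you should invoke order continuity of the $L^p$-norm (every decreasing net with infimum $0$ is norm-null, $p<\infty$) rather than dominated convergence, which is a sequential tool; that an increasing net converging in norm has its limit as order supremum is then standard.

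The genuine gap is at the supremum defining $\tilde T(u)$. You assert that $L^0(€A)$ is Dedekind complete, but semi-finiteness does not give this: $L^0(€A)$ is Dedekind complete exactly when $€A$ is, i.e.\ when the measure algebra is localizable, which is strictly stronger than semi-finite. Moreover, even when $€A$ is Dedekind complete, order density alone does not make the set $\{T(v) : v \in L^p(€A,μ),\ 0 \leq v \leq u\}$ bounded above in $L^0(€A)$; an argument is needed, for instance: choose a maximal disjoint family $(a_j)$ in $€A$ with $μ(a_j)<\infty$ and $u \meet n_j\charfun{a_j} = u\cdot\charfun{a_j}$ for some $n_j$ (so each $u\cdot\charfun{a_j}$ lies in $L^p(€A,μ)$), observe that the elements $T(u\cdot\charfun{a_j})$ are pairwise disjoint because $T$ preserves disjointness, and use the lateral (universal) completeness of $L^0$ over a Dedekind complete algebra to form their supremum, which is the required upper bound. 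This is not cosmetic: over a semi-finite non-localizable algebra the conclusion itself can fail --- realising $\ell^p$ of an uncountable set as $L^p$ of a countable--cocountable-type measure algebra, a suitable isometric automorphism induced by a permutation of the atoms admits no Riesz extension to $L^0(€A)$, since such an extension would have to carry an existing supremum of characteristic functions to a supremum that does not exist in $L^0(€A)$. So your argument in fact proves the statement under the additional hypothesis that $€A$ is Dedekind complete; that hypothesis holds everywhere the paper applies the fact (the algebra always comes from Kakutani's representation), but it should be made explicit, and the boundedness of the defining family must be justified rather than attributed to order density.
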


Define now, for each $σ ∈ \aut(€A)$ and each $u$ in $L^p(€A,μ)$,
\begin{equation}\label{induced transformation}
	\widetilde{σ}(u) = §(\frac{\dd μσ^{-1}}{\dd μ})^{\frac{1}{p}} ⋅ T_{σ}(u).
\end{equation}
First notice that $\widetilde{σ}$ is a Riesz automorphism of $L^p(€A,μ)$, because $T_{σ}$ is, and $\dd μσ/\!\dd μ$ is strictly positive. 
Secondly, by \Cref{aut of L0}-1
, we have $T_{σ}(\abs{u})^p = T_{σ}(\abs{u}^p)$, and thus
\begin{equation*}
\norm{\widetilde{σ}(u)}^p 
= \int \frac{\dd μσ^{-1}}{\dd μ}⋅ \abs{T_{σ}(u)}^p \dd μ 
= \int T_{σ}(\abs{u}^p) \dd μσ^{-1} = \int \abs{u}^p \dd μ = \norm{u}^p,
\end{equation*}
where we have used \eqref{law of composition} for the third equality. This shows that $\widetilde{σ}$ is in fact a Banach-lattice automorphism of $L^p(€A, μ)$.
We can now prove the final piece needed for the representation of group actions on a Banach $L_p$ lattice.

\begin{lemma}\label{aut isom algebra Lp}
	Let $(€A,μ)$ be a Dedekind-complete and semi-finite measure algebra. Then 
	\begin{equation*}
		\aut(€A) \isom \aut(L^p(€A,μ)) 
	\end{equation*}
	via the map $σ \mapsto \widetilde{σ}$ defined in \eqref{induced transformation}.
\end{lemma}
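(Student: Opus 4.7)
The plan is to verify three things: that $σ \mapsto \widetilde{σ}$ is a group homomorphism, that it is injective, and that it is surjective. The paragraphs preceding the statement already showed that $\widetilde{σ}$ lies in $\aut(L^p(€A,μ))$ for every $σ \in \aut(€A)$, so the map is well defined.

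For the homomorphism property, I would unfold $\widetilde{σ}(\widetilde{τ}(u))$ and use the multiplicativity of $T_σ$ (Fact~\ref{aut of L0}) together with the functoriality $T_σ \circ T_τ = T_{στ}$ to reduce the desired equality $\widetilde{στ} = \widetilde{σ}\circ \widetilde{τ}$ to the pointwise identity
\begin{equation*}
\frac{\dd μ (στ)^{-1}}{\dd μ} = \frac{\dd μ σ^{-1}}{\dd μ} \cdot T_σ\Bigl(\frac{\dd μ τ^{-1}}{\dd μ}\Bigr).
\end{equation*}
This follows from the Radon--Nikodym chain rule noted after Fact~\ref{general Radon-Nikodym}, combined with equation~\eqref{rn derivative composition} applied with $σ$ and $τ$ respectively replaced by $τ^{-1}$ and $σ^{-1}$ and then pulled through by $T_σ$.

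For injectivity, $T_σ$ maps each $\band{\charfun{a}}$ onto $\band{\charfun{σ(a)}}$ by Fact~\ref{aut of L0}, and multiplication by the strictly positive factor $(\dd μσ^{-1}/\dd μ)^{1/p}$ leaves every band fixed; hence $\widetilde{σ}$ induces $σ$ on the band algebra, and the identification of Fact~\ref{band algebra isom} forces $σ = \id$ whenever $\widetilde{σ}$ is the identity.

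The main obstacle is surjectivity. Given $S \in \aut(L^p(€A,μ))$, the Riesz-isomorphism property gives $S(\band{A}) = \band{S(A)}$ for every subset $A$, so via Fact~\ref{band algebra isom} we extract a unique $σ \in \aut(€A)$ with $S(\band{\charfun{a}}) = \band{\charfun{σ(a)}}$ for every $a \in €A$. The operator $R := \widetilde{σ}^{-1} \circ S$ then lies in $\aut(L^p(€A,μ))$ and fixes every band $\band{\charfun{a}}$. Using Fact~\ref{extension to measurable} one extends $R$ to a Riesz automorphism $\bar R$ of $L^0(€A)$, which still preserves every band. Any band-preserving (i.e.\ orthomorphic) Riesz automorphism of $L^0(€A)$ is multiplication by a strictly positive $f \in L^0(€A)$ — a standard consequence of the Stone-space realization, where band preservation forces the operator to commute with multiplication by every clopen characteristic function — so $R$ acts as multiplication by such an $f$. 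Finally the isometry property applied to $\charfun{a}$ for each $a$ of finite measure gives $\int_a f^p \,\dd μ = μ(a)$, and the uniqueness part of Fact~\ref{general Radon-Nikodym} forces $f = 1$, whence $R = \id$ and $S = \widetilde σ$.
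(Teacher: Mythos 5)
Your proof is correct, and its first half---well-definedness and the homomorphism identity via the Radon--Nikodym chain rule together with \eqref{rn derivative composition}---is essentially the same as the paper's. The divergence is in the endgame. For injectivity the paper extends $\widetilde{σ}$ to $L^0(€A)$ by \Cref{extension to measurable} and reads the density $(\dd μσ^{-1}\!/\dd μ)^{1/p}$ off $\widetilde{σ}(\charfun{1_{€A}})$, whereas you argue through the band algebra; both work. For surjectivity the paper extracts $σ$ from the action of $S$ on the band algebra exactly as you do, but then verifies $\widetilde{σ}=S$ directly, by checking $\int_a \widetilde{σ}(\charfun{b})^p \dd μ = μ(b ∩ σ^{-1}a) = \int_a S(\charfun{b})^p \dd μ$ for all $a$ and all $b$ of finite measure, using only the isometry and the compatibility of $S$ with restrictions. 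You instead factor $R=\widetilde{σ}^{-1}\circ S$ and quote the orthomorphism fact that a band-preserving Riesz automorphism of $L^0(€A)$ is multiplication by a strictly positive element; this is indeed standard and your sketch of it is right (band preservation forces commutation with multiplication by each $\charfun{a}$, so $R$ agrees with multiplication by $R(\charfun{1_{€A}})$ on simple elements, and lattice isomorphisms preserve existing suprema), but it is an auxiliary result not among the Facts quoted in the paper, so the paper's bare-hands computation is cheaper in context, while your factorisation is arguably more conceptual and reusable. If you keep your route, tighten three small points: that the extension of $R$ to $L^0(€A)$ is still band-preserving needs a word (order-density of $L^p$ in each band plus preservation of suprema); the isometry only gives $\int_a f^p \dd μ = μ(a)$ for $a$ of finite measure, so semi-finiteness must be used before invoking the uniqueness in \Cref{general Radon-Nikodym}; and your injectivity argument tacitly uses the homomorphism property (trivial kernel suffices), which is fine since you establish that first.
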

\begin{proof}
	We have already proved that $σ \mapsto \widetilde{σ}$ is a well defined map $Φ\colon \aut(€A) \to \aut(L^p(€A,μ))$. Given $σ,τ ∈ \aut(€A)$, we have
	\begin{equation*}
		\frac{\dd μτ^{-1}σ^{-1}}{\dd μ} = \frac{\dd μτ^{-1}σ^{-1}}{\dd μσ^{-1}} ⋅ \frac{\dd μσ^{-1}}{\dd μ} = T_{σ}§( \frac{\dd μτ^{-1}}{\dd μ} )⋅ \frac{\dd μσ^{-1}}{\dd μ}
	\end{equation*}
	by \eqref{rn derivative composition}, and thus
	\begin{align*}
		\widetilde{στ}(u) &= §(\frac{\dd μτ^{-1}σ^{-1}}{\dd μ})^{\frac{1}{p}} ⋅ T_{στ}(u) \\
		&= §(\frac{\dd μσ^{-1}}{\dd μ})^{\frac 1p}  ⋅ T_{σ}§( §(\frac{\dd μτ^{-1}}{\dd μ})^{\frac 1p} ⋅ T_{τ}(u) )  
		= \widetilde{σ}(\widetilde{τ}(u)),
	\end{align*}
	since the map $σ \mapsto T_{σ}$ is a multiplicative isomorphism between $\aut(€A)$ and $\aut(L^0(€A))$, by \Cref{aut of L0}. 
	
	If $σ ∈ \aut(€A)$, apply \Cref{extension to measurable} to extend $\widetilde{σ}$ to $L^0(€A)$. This lets us compute $\widetilde{σ}(\charfun{1_{€A}})$, which is precisely the density factor $(\dd μσ^{-1} / \dd μ)^{1/p}$. As a consequence, if $σ,τ ∈ \aut(€A)$ are such that $\widetilde{σ} = \widetilde{τ}$, then $T_{σ} = T_{τ}$, and thus $σ = τ$. 
	This means that $Φ$ is injective. 
	
	Consider now some $T ∈ \aut(L^p(€A,μ))$, it only remains to prove that there is $σ∈\aut(€A)$ such that $\widetilde{σ} = T$. First use \Cref{extension to measurable} to extend $T$ to $L^0(€A)$, then use \Cref{band algebra isom} to identify $€A$ with the band algebra of $L^p(€A,μ)$. At this point we may define
	\begin{equation*}
		σ\colon a \mapsto \band{T\charfun{a}}
	\end{equation*}
	and show that $σ∈\aut(€A)$. In order to prove that $\widetilde{σ} = T$, we just need to check that for all $a,b ∈ €A$ with $μb < \infty$, we have 
	\begin{equation*}
		\int_a \widetilde{σ}(\charfun{b})^p \dd μ = \int_a T(\charfun{b})^p\dd μ.
	\end{equation*}
	The left-hand side is equal to $\int_a \charfun{σb} \dd μσ^{-1} = μ(b ∩ σ^{-1} a)$, and the right-hand side is
	\begin{equation*}
		\int §(T(\charfun{b}) \restr \charfun{a})^p \dd μ 
		= \int §(T(\charfun{b}\restr T^{-1}\charfun{a}) )^p \dd μ 
		= \int §(T(\charfun{b} ⋅ \charfun{σ^{-1}a}) )^p \dd μ
		= μ(b ∩ σ^{-1}a),
	\end{equation*}
	which completes the proof.
\end{proof}

We will now introduce the notion of $G$-$L_p$ lattice, and put the results of this section together to prove that each such structure can represented by a concrete $L_p$ lattice over a measure space equipped with a non-singular group action. This is a generalisation of Kakutani's Representation Theorem \cite{kakutani-L-representation}, which can be rephrased with the language of measure algebras as follows.

\begin{fact}[Kakutani's Representation Theorem]\label{kakutani}
	Let $1⩽ p < \infty$, $E$ an $L_p$ lattice, and $E_0⊆E$ a maximal disjoint subset of positive elements of $E$. Then, there exists a Dedekind-complete and semi-finite measure algebra $(€A,μ)$ such that $E$ is isomorphic to $L^p(€A,μ)$ as a Banach lattice, and the elements of $E_0$ are mapped to characteristic functions under this isomorphism.
\end{fact}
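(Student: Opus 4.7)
The plan is to construct the measure algebra directly from the band structure of $E$, using $E_0$ to normalise the measure. First, I would let $€A$ denote the Boolean algebra of bands of $E$, which is well-known to be Dedekind complete (cf.\ \Cref{band algebra isom}). For each $e∈E_0$, I would set $B_e = \band{e}$, the principal band generated by $e$. By maximality of $E_0$ among pairwise disjoint families of positive elements of $E$, the family $§{B_e}_{e∈E_0}$ is pairwise disjoint in $€A$ and its supremum equals $1_{€A}$.

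Next, I would define $μ\colon €A \to \ccint{0,\infty}$ by
\begin{equation*}
	μ(B) = \sum_{e∈E_0} \norm{P_B e}^p,
\end{equation*}
where $P_B$ denotes the band projection onto $B$. Countable additivity reduces, via the band projections, to the defining property of $L_p$ lattices that $\norm{u+v}^p = \norm{u}^p + \norm{v}^p$ whenever $u$ and $v$ are disjoint positives, applied to the summands $P_{B_n}e$. For semi-finiteness, any band $B$ with $μ(B) > 0$ must satisfy $μ(B ∩ B_e) > 0$ for some $e ∈ E_0$, and then $μ(B ∩ B_e) ⩽ μ(B_e) ⩽ \norm{e}^p < \infty$.

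To construct the Banach-lattice isomorphism $Φ\colon E \to L^p(€A,μ)$, I would observe that for every positive $u ∈ E$, the set function $B \mapsto \norm{P_B u}^p$ is a measure on $€A$ absolutely continuous with respect to $μ$. Applying the generalised Radon-Nikodym result of \Cref{general Radon-Nikodym}, there is a unique $f_u ∈ L^0(€A)$ such that $\norm{P_B u}^p = \int_B f_u \dd μ$, and I would set $Φ(u) = f_u^{1/p}$, extending to all of $E$ by $Φ(u) = Φ(u^+) - Φ(u^-)$. By construction $Φ$ is an isometry; moreover, for each $e ∈ E_0$ one computes directly that $\norm{P_B e}^p = μ(B ∩ B_e)$, so $f_e = \charfun{B_e}$ and $Φ(e) = \charfun{B_e}$, as required.

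The main obstacle will be verifying additivity and surjectivity of $Φ$. Additivity is delicate because $f_{u+v}$ is not obviously related to $f_u + f_v$ when $u$ and $v$ are not disjoint; the standard workaround is to check it first on disjoint sums, where the $L_p$ norm splits, and then reduce the general case via the Riesz decomposition and continuity of $Φ$. Surjectivity will follow by showing that the image of $Φ$ is a closed Riesz subspace of $L^p(€A,μ)$ containing the characteristic function of every band of finite measure, and hence all simple functions, which are dense.
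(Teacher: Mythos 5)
Your overall architecture --- taking the band algebra of $E$ as the Boolean algebra, defining $\mu(B)=\sum_{e\in E_0}\|P_B e\|^p$ (with $P_B$ the band projection), and recovering characteristic functions for the elements of $E_0$ --- is sound and is essentially the classical construction; note that the paper does not reprove the result but simply cites Meyer--Nieberg's Theorem~2.7.1 together with Fremlin 322N and \Cref{measure vs algebra}. The measure-theoretic half of your sketch checks out: countable additivity follows from order continuity of the norm plus $p$-additivity on disjoint elements, strict positivity of $\mu$ follows from maximality of $E_0$, and your semi-finiteness argument is correct.

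The genuine gap is the additivity of $\Phi$, and the workaround you propose does not close it. Defining $\Phi(u)=f_u^{1/p}$ via \Cref{general Radon-Nikodym} does give positive homogeneity and additivity on \emph{disjoint} positive elements (there $\nu_{u+v}=\nu_u+\nu_v$ and $f_u$, $f_v$ have disjoint supports), but for non-disjoint $u,v$ the set function $\nu_{u+v}(B)=\|P_B(u+v)\|^p$ is not determined by $\nu_u$ and $\nu_v$ at all; the Riesz decomposition property only splits an element dominated by a sum and does not convert $u+v$ into a disjoint sum, and you cannot invoke ``continuity of $\Phi$'' at this stage, since $\|\Phi(u)\|=\|u\|$ on the positive cone gives no modulus of continuity for a map not yet known to be linear, nor is there yet a dense set on which additivity is known. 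The missing ingredient is exactly what the textbook proof supplies: define the map linearly on the sublattice of simple elements, i.e.\ finite linear combinations of components of the elements $u_B=\sum_{e\in E_0}P_B e$ (equivalently, components of the weak units of the principal bands $\band{e}$), where isometry is immediate from disjointness, and then use the Freudenthal spectral theorem (or an equivalent approximation-by-components argument) to show these simple elements are norm dense in $E$, so that $\Phi$ extends to a linear isometry; only after that do your closed-image, surjectivity, and lattice-isomorphism claims go through. A minor further point: \Cref{general Radon-Nikodym} as stated applies to (strictly positive) semi-finite measures on $\mathcal{A}$, whereas your $\nu_u$ vanishes off the band generated by $u$; this is easily repaired by working inside that band, but as written the citation does not literally apply.
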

\begin{proof}
	See the proof of \cite[Theorem~2.7.1]{meyer-banach}, and apply \cite[322N]{fremlin3i} and \Cref{measure vs algebra}.
\end{proof}

\begin{defin}
	Given $1⩽p<\infty$ and a group $G$, we call a \emph{$G$-$L_p$ lattice} the aggregate of an $L_p$ lattice $E$ and a homomorphism $ρ\colon G \to \aut(E)$ defining a group action of $G$ on $E$ by Banach-lattice automorphisms. 
	
	A \emph{homomorphism $Φ\colon(E,ρ) \to (E',ρ')$ of $G$-$L_p$ lattices} is an isometric normed lattice homomorphism $Φ\colon E \to E'$ such that $Φ \comp ρ(g) = ρ'(g) \comp Φ$, for each $g∈G$.
\end{defin}

\begin{defin}
	Suppose $π\colon G \to \aut^*(X,£F,μ)$ is a non-singular action on a measure space $(X,£F,μ)$. We define $L^p(X,£F,μ,π)$ to be the $G$-$L_p$ lattice $(E,ρ)$, where $E = L^p(X,£F,μ)$ and $ρ(g) = \widetilde{(πg)^{\natural}}$, as defined in \eqref{induced transformation} and \eqref{induced on algebra}, for each $g∈G$.
\end{defin}

\begin{defin}
	An element $u$ of an $L_p$ space $E$ is said to be a \emph{weak unit} if $\band{u} = E$, or equivalently, if there is no non-zero $w∈E$ disjoint from it.
\end{defin}

\begin{theo}[representation of $G$-$L_p$ lattices] \label{representation theorem}
	Let $(E,ρ)$ be a $G$-$L_p$ lattice and $E_0 ⊆ E$ a maximal disjoint subset of positive elements of $E$. Then there is a measure space $(Z,Σ,μ)$, and a non-singular action $π$ of $G$ on it, such that $(E,ρ)$ is isomorphic to $L^{p}(Z,Σ,μ,π)$ as a $G$-$L_p$ lattice.
	
	Moreover, this isomorphism can be chosen so that the elements of $E_0$ correspond to characteristic functions on $Z$. In particular, if $E$ has a weak unit $u$, then $Z$ can be chosen of total measure $\norm{u}^p$ with $u$ corresponding to $χ\lowersub{Z}$.
\end{theo}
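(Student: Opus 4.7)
My plan is to assemble the theorem by chaining the three correspondence results established earlier in this section with Kakutani's Representation Theorem, so that the group action on $E$ is carried, step by step, to a genuine non-singular action on a measure space. First I would apply \Cref{kakutani} to obtain a Dedekind-complete semi-finite measure algebra $(€A,μ)$ together with a Banach-lattice isomorphism $Ψ\colon E \to L^p(€A,μ)$ sending each element of $E_0$ to a characteristic function $χ\lowersub{a}$ for some $a∈€A$. Next I would transport the $G$-action to this new setting: for each $g∈G$, set $ρ'(g) = Ψ\comp ρ(g) \comp Ψ^{-1} ∈ \aut(L^p(€A,μ))$, which defines a group homomorphism $ρ'\colon G \to \aut(L^p(€A,μ))$.

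At this point I would invoke \Cref{aut isom algebra Lp} to find, for each $g∈G$, the unique $σ_g ∈ \aut(€A)$ with $\widetilde{σ_g} = ρ'(g)$; since the map $σ \mapsto \widetilde{σ}$ is a group isomorphism, $g \mapsto σ_g$ is a homomorphism $G \to \aut(€A)$. Setting $(Z,Σ,ν) = \stone(€A,μ)$, I would then apply \Cref{aut isom algebra stone} to lift each $σ_g$ to the unique non-singular transformation $π_g$ of $Z$ with $(π_g)^{\natural} = σ_g$; again, because $φ \mapsto φ^{\natural}$ is a group isomorphism, $π\colon g \mapsto π_g$ is a non-singular action of $G$ on $(Z,Σ,ν)$. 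By construction, the induced action on $L^p(Z,Σ,ν)$ is exactly $\widetilde{(π g)^{\natural}} = \widetilde{σ_g} = ρ'(g)$, so composing $Ψ$ with the Banach-lattice identification $L^p(€A,μ) = L^p(Z,Σ,ν)$ built into our definitions yields a $G$-$L_p$ lattice isomorphism $(E,ρ) \isom L^p(Z,Σ,ν,π)$. Through \Cref{measure vs algebra}, the characteristic function $χ\lowersub{a}$ in $L^p(€A,μ)$ corresponds to $χ\lowersub{\widehat{a}}$ in $L^p(Z,Σ,ν)$, so the elements of $E_0$ indeed end up as characteristic functions on $Z$, as required.

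For the weak-unit addendum, I would take $E_0 = §{u}$, which is maximal disjoint precisely because $u$ is a weak unit. Then $u$ maps to $χ\lowersub{a}$ for some $a∈€A$, and since $\band{u} = E$, \Cref{band algebra isom} forces $a = 1_{€A}$, so $χ\lowersub{a} = χ\lowersub{Z}$ and $ν(Z) = μ(1_{€A}) = \norm{χ\lowersub{Z}}^p = \norm{u}^p$. I expect the only delicate point in the plan to be the verification that the various identifications genuinely commute with the group actions; this is essentially formal once one observes that each of the three correspondences used is an isomorphism of \emph{groups}, so the composition automatically transports the homomorphism $ρ$ to a homomorphism $π$, and the defining formula of $L^p(Z,Σ,ν,π)$ matches $ρ'$ by construction.
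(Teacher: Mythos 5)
Your proposal is correct and follows essentially the same route as the paper: Kakutani plus the measure-algebra identification to realise $E$ as $L^p$ over the Stone representation with $E_0$ going to characteristic functions, then the two group isomorphisms $\aut(L^p(€A,μ)) \isom \aut(€A) \isom \aut^*(\stone(€A,μ))$ to pull the action $ρ$ back to a non-singular action $π$, and the maximality/weak-unit observation for the addendum. The only difference is cosmetic: the paper composes the two isomorphism lemmas into a single map $Ψ\colon \aut^*(Z,Σ,μ) \to \aut(E)$ and sets $π = Ψ^{-1}\comp ρ$ in one stroke, whereas you perform the transfer in two explicit stages.
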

\begin{proof}
	By \Cref{kakutani} and \Cref{measure vs algebra}, we may assume that $E = L^{p}(Z,Σ,μ)$, where $(Z,Σ,μ)$ is the Stone representation of  some Dedekind-complete and semi-finite measure algebra $(€A,\bar{μ})$, and that the elements of $E_0$ are characteristic functions of clopen subsets of $Z$.
	
	The moreover part of the theorem is then satisfied. In particular, when $E$ has a weak unit $u$, we can choose $E_0= §{u}$, so that $u$ is of the form $\charfun{A}$ for some clopen $A⊆Z$. By maximality of $E_0$, $A$ must be $Z$, and thus $μ(Z) = \norm{\charfun{Z}}^p = \norm{u}^p$.

	We shall now deal with the action of $G$. By \Cref{aut isom algebra stone} and \Cref{aut isom algebra Lp}, the map $Ψ\colon \aut^*(Z,Σ,μ) \to \aut(E)$ sending $φ$ to $\smash{\widetilde{(φ^\natural)}}$ is a group isomorphism, which means that $π = Ψ^{-1} \comp ρ$ is an action of $G$ on $(Z,Σ,μ)$ by non-singular transformations. By definition, the action of $G$ in the concrete $G$-$L_p$ lattice $L^p(Z,Σ,μ,π)$ is precisely $Ψ \comp π = ρ$, which concludes the proof.
\end{proof}

\begin{obs}
	Let $(E,ρ)$ be a $G$-$L_p$ lattice, and suppose that $F⊆E$ is a closed vector sublattice of $E$ that is invariant under $ρ$. Then we can define $ρ_F(g) = ρ(g) \restr F$ for all $g∈G$, and $(F,ρ_F)$ is a sub-$G$-$L_p$-lattice of $(E,ρ)$.
	
	By the previous theorem, we may assume $(E,ρ)= L^p(Z,Σ,μ,π)$. Consider now the family
	\begin{equation*}
		Σ_{F} = §{ \supp (f) \symdiff M : f ∈ F, M \text{ meagre} },
	\end{equation*}
	where $\supp (f)$ is clopen associated to image of $\band{f}$ under the isomorphism of \Cref{band algebra isom}, between the band algebra and the underlying measure algebra of $E$. 
	
	Notice that is a sub-\tsigma-algebra of $Σ$ that is invariant under $π$, but $F$ does not necessarily coincide with a subspace of $E$ of the form $L^p(Z',Σ_F,μ)$ for some $Z' ⊆ Z$. For instance, if $τ \colon L^p\ccint{0,1} \to L^p\ccint{1,2}$ is induced by a translation of the real line, then the set 
	\begin{equation*}
		F = §{a + \frac 12 τ(a) : a ∈ L^p\ccint{0,1} }[Big] ⊆ L^p\ccint{0,2}
	\end{equation*}
	is a sublattice of $L^p\ccint{0,2}$, but not of the form $L^p(Z',Σ_F, λ)$, for any $Z' ⊆ \ccint{0,2}$.
\end{obs}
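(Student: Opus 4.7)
The plan is to verify the three separate assertions of the remark: that $(F,ρ_F)$ is a sub-$G$-$L_p$-lattice, that $Σ_F$ is a $π$-invariant sub-$σ$-algebra, and that the displayed example genuinely obstructs the identification $F = L^p(Z',Σ_F,μ)$.

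The first claim is a definition check: since $F$ is closed and $ρ$-invariant and each $ρ(g)$ is a Banach-lattice automorphism of $E$, the restriction $ρ_F(g) = ρ(g)|_F$ is a Banach-lattice automorphism of $F$ with inverse $ρ(g^{-1})|_F$, and $g \mapsto ρ_F(g)$ inherits the homomorphism property from $ρ$.

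For the second, I would fix the representation $(E,ρ) = L^p(Z,Σ,μ,π)$ supplied by \Cref{representation theorem} and, via \Cref{band algebra isom}, identify the band algebra of $E$ with the underlying measure algebra, so that $\supp(f \vee g) = \supp(f) \cup \supp(g)$ and $\supp(f \wedge g) = \supp(f) \cap \supp(g)$ modulo meagre sets. Since $F$ is a closed Riesz sublattice, for any sequence $(f_n) \subseteq F$ the norm-convergent series $\sum_n 2^{-n}|f_n|/(1+\|f_n\|)$ belongs to $F$ and has support $\bigcup_n \supp(f_n)$; likewise, for any positive $f_0 \in F$ the $L^p$-limit of the increasing sequence $f_0 - (f_0 \wedge n|f|)$ lies in $F$ and has support $\supp(f_0) \setminus \supp(f)$. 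These three operations give closure of $Σ_F$ under countable unions, intersections, and relative complements, so $Σ_F$ is a sub-$σ$-algebra of the trace of $Σ$ on the ambient support of $F$. For $π$-invariance one verifies $\supp(ρ(g) f) = π(g)(\supp f)$, which is clear from \eqref{induced transformation}: the density factor $(\dd μσ^{-1}/\dd μ)^{1/p}$ is strictly positive and so preserves supports, while $T_σ$ sends $\charfun{a}$ to $\charfun{σ(a)}$ by \Cref{aut of L0}.

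The remaining and most substantial task is the counterexample. I would first check that $F = \{a + \tfrac12 τ(a) : a \in L^p[0,1]\}$ is a closed vector sublattice of $L^p[0,2]$: the linear map $a \mapsto a + \tfrac12 τ(a)$ has norm equivalent to that of $a$ because the supports of the two summands are disjoint, so $F$ is closed; and this same disjointness gives $|a + \tfrac12 τ(a)| = |a| + \tfrac12 τ(|a|) \in F$, so $F$ is a Riesz sublattice. A generic element of $F$ has support $\supp(a) \cup (1 + \supp(a))$, so $Σ_F$ is precisely the sub-$σ$-algebra of subsets of $[0,2]$ symmetric under $x \leftrightarrow x+1$, and any $Σ_F$-measurable function must take equal values at $x$ and $x+1$. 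The main obstacle, which completes the proof, is that a generic element of $F$ takes on $[1,2]$ exactly half the value it takes on $[0,1]$: fixing any nonzero positive $a$ produces an $f \in F$ that is not $Σ_F$-measurable, ruling out the identification $F = L^p(Z',Σ_F,μ)$ for any $Z' \subseteq [0,2]$.
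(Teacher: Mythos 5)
Your verification is correct and follows exactly the route the paper intends: the remark itself is stated without a detailed argument, and you have simply fleshed out the same three claims with the same counterexample $F=\{a+\tfrac12\tau(a)\}$, whose elements fail $\Sigma_F$-measurability because they take halved values on $[1,2]$. Only trivial slips remain (the sequence $f_0-(f_0\wedge n|f|)$ is decreasing, not increasing, and in the concrete example the negligible sets are null rather than meagre), neither of which affects the argument.
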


\section{Free $G$-$L_p$ lattices and their model theory}\label{GLp lattices}

In this section we introduce a notion of freeness for $G$-$L_p$ lattices, which corresponds to setwise freeness under the isomorphism of \Cref{representation theorem}. The decomposition lemma (\Cref{amenable rokhlin lemma}) for probability spaces then yields a local decomposition of a free $G$-$L_p$ lattice in disjoint bands in a natural way. This will allow us to axiomatise the class of free $G$-$L_p$ lattices in the continuous language of Banach lattices, and prove that its theory is model complete, thus generalising a result in \cite{scielzo}.

Let $G$ be a countable group, and $1⩽p<\infty$. In the following, when referring to $G$-$L_p$ lattices, we will simply write $E$ and $gv$ instead of $(E,ρ)$ and $ρ(g)(v)$.
\begin{defin}
	A $G$-$L_p$ lattice $E$ is said to be \emph{functionally free} if for all $g∈G\setdiff §{1_G}$ and all positive $u∈ E$ there is a positive $v ∈ \band{u}$ such that $v$ and $gv$ are disjoint.
\end{defin}

Suppose now that $E$ is a concrete $G$-$L_p$ lattice $L^p(X,£F,μ,π)$ over a semi-finite measure space $(X,£F,μ)$. If $π$ is setwise free, $g∈G\setdiff{1_G}$, and $u$ is a positive element of $E$, then there is a set $A⊆\supp u$ of positive measure which is disjoint from $gA$. By semi-finiteness, we may find $B⊆A$ of positive and finite measure, so $v = \charfun{B}$ witness that $E$ is functionally free. In the same fashion, one can show that the converse holds as well. We have thus the following fact.

\begin{fact}
	Let $(X,£F,μ)$ be a semi-finite measure space. A $G$-$L_p$ lattice $L^p(X,£F,μ,π)$ is functionally free if and only if $π$ is setwise free.
\end{fact}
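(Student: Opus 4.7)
The plan is to pass through the standard dictionary between positive elements of $L^p(X,\mathcal{F},\mu)$ and their essential supports: for positive $v \in L^p$, the support $\supp v$ is well-defined modulo null sets, \Cref{band algebra isom} identifies $\band{v}$ with the principal ideal generated by $\supp v$ in the measure algebra, so $w \in \band{v}$ iff $\supp w \subseteq \supp v$ modulo null; and positive $v, w$ are disjoint in $E$ iff $\mu(\supp v \cap \supp w) = 0$. Non-singularity of $\pi(g)$ further gives $\supp(gw) = g \supp w$ modulo null sets.

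For the forward direction, assume $\pi$ is setwise free, fix $g \neq 1_G$, and let $u \in E$ be positive. Apply setwise freeness to $A = \supp u$, and invoke the observation following the definition of setwise freeness that in the non-singular case the witness $B$ may be chosen so that $gB \cap B = \emptyset$. Shrinking $B$ by semi-finiteness so that $\mu(B) < \infty$, the function $v = \charfun{B}$ then lies in $E$, is positive, belongs to $\band{u}$, and satisfies $v \meet gv = 0$, witnessing functional freeness.

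For the converse, suppose $E$ is functionally free, fix $g \neq 1_G$, and let $A \in \mathcal{F}$ have positive measure. By semi-finiteness, pick $A' \subseteq A$ of positive finite measure, so $u = \charfun{A'}$ is positive in $E$. Functional freeness yields a positive $v \in \band{u}$ with $v \meet gv = 0$; choosing $B \in \mathcal{F}$ to represent $\supp v$, we have $B \subseteq A$ modulo null, $\mu(B) > 0$, and $\mu(gB \cap B) = 0$. By non-singularity $\mu(gB) > 0$ as well, so $\mu(gB \symdiff B) > 0$, witnessing setwise freeness of $\pi$ at $(g, A)$. Once the support dictionary is in place, both directions reduce to passing between characteristic functions and the sets they indicate, and I foresee no genuine obstacle; semi-finiteness is used only to keep these characteristic functions in $L^p$.
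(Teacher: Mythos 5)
Your proposal is correct and follows essentially the same route as the paper: translate between positive elements and their supports, use the remark that in the non-singular setting the setwise-freeness witness $B$ may be taken with $gB \cap B = \emptyset$, and use semi-finiteness to shrink to finite measure so that characteristic functions lie in $L^p$. The converse, which the paper only sketches as ``in the same fashion,'' is spelled out by you in exactly the intended way.
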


\subsection{Decomposition of free $G$-$L_p$ lattices}

We will now translate \Cref{amenable rokhlin lemma} into the language of Banach lattices. Given an element $v$ in a $G$-$L_p$ lattice $E$ with a weak unit, we will find an array of disjoint elements $tu_T$, with $T$ varying in a tiling family $£T$ of $G$ and $t∈T$, which cover most of the support of $v$. We may even choose the $u_T$'s to be in a prescribed $G$-invariant sublattice of $E$. Furthermore, if the shapes in $£T$ are sufficiently invariant, we may also choose the $u_T$'s so that their images $t u_T$, with $t$ in the boundary of $T$, only cover a small portion of $v$.

Here we use the notations $\inboundary{K} T = \boundary_K T ∩ T$ for the \emph{inside $K$-boundary} of $T$, and $E_+$ for the set of positive elements of $E$. Recall also that if $v,u∈ E$, we denote by $v \restr  u$ the projection of $v$ onto the band $\band{u}$.

\begin{lemma}[decomposition of $G$-$L_p$ lattices]\label{rokh lemma}
	Let $G$ be a countable amenable group, and let $K$, $ε$, $δ$, and $£T$ be as in \Cref{amenable rokhlin lemma}.
	Suppose that $E$ is a functionally free $G$-$L_p$ lattice and that $F⊆E$ is a $G$-invariant closed sublattice containing a weak unit $w_0$ for $E$.
	For any $v ∈ E_+$, there are elements $u_{\mathrm{err}}∈F_+$ and $u\lowersub{T}∈F_+$ for each $T∈£T$,  such that
	\begin{enumerate}
		\item $§{ u_{\mathrm{err}} } ∪ §{ t u_T : t∈T∈£T }$ is a maximal disjoint subset of $F$,
		\item $\norm{ v \restr  u_{\mathrm{err}} } < ε^{1/p} \norm {w_0 + v}$,
		\item $\displaystyle \sum_{T∈£T}\, \sum_{s∈\inboundary{K} T} \norm{ v\restr s u_T} ⩽  ε^{1/p} \norm {w_0 + v}$. 
	\end{enumerate}
\end{lemma}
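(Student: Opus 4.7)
The plan is to translate the statement into the measure-theoretic setting via \Cref{representation theorem}, apply the amenable Rokhlin lemma (\Cref{amenable rokhlin lemma}) to a probability measure encoding both $w_0$ and $v$, and pull the decomposition back into positive elements of $F$.

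First I would apply \Cref{representation theorem} to $F$, choosing a maximal disjoint set of positive elements of $F$ containing $w_0$ (which is a weak unit of $F$ as well, inherited from $E$), so as to identify $F$ with $L^p(X, \mathcal{F}, \mu, \pi_F)$ where $\mu(X) = \|w_0\|^p$ and $w_0$ corresponds to $\charfun{X}$. Provided $F$ is itself functionally free, the fact preceding \Cref{stay within image boundaries} gives that $\pi_F$ is setwise free on $(X,\mathcal{F},\mu)$. To bring $v \in E$ into the measure-theoretic picture, I define a finite measure on $\mathcal{F}$ by $\eta(A) = \|v \restr \charfun{A}\|^p$, the projection being computed in $E$; $\sigma$-additivity follows from the $p$-additivity of the $L^p$-norm on disjointly-supported elements and the continuity of band projection. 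Set $\lambda = \mu + \eta$: this is a finite measure equivalent to $\mu$, with total mass $\lambda(X) = \|w_0\|^p + \|v\|^p \leq \|w_0+v\|^p$ (the last inequality from $(a+b)^p \geq a^p + b^p$ for $a,b \geq 0$ and $p \geq 1$). In particular $\pi_F$ is still non-singular and setwise free with respect to $\lambda$.

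Next, normalize $\lambda$ to a probability measure $\lambda'$ and invoke \Cref{amenable rokhlin lemma} on $(X,\mathcal{F},\lambda',\pi_F)$ with the given $K,\varepsilon,\delta,\mathcal{T}$ to obtain $T$-sets $W_T \in \mathcal{F}$ whose distinct $TW_T$'s are pairwise disjoint, with $\lambda'(\bigcup_T TW_T) > 1-\varepsilon$ and $\sum_T \lambda'((\partial_K T)W_T) < \varepsilon$. Setting $u_T = \charfun{W_T}$ and $u_{\mathrm{err}} = \charfun{X \setminus \bigcup_T TW_T}$, both in $F_+$, condition~1 is immediate from the $T$-set structure of each $W_T$ and the disjointness of distinct $TW_T$'s (the union of all their supports is $X$). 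Condition~2 reduces to
\begin{equation*}
\|v \restr u_{\mathrm{err}}\|^p \;=\; \eta(W_{\mathrm{err}}) \;\leq\; \lambda(W_{\mathrm{err}}) \;<\; \varepsilon\,\lambda(X) \;\leq\; \varepsilon\,\|w_0+v\|^p.
\end{equation*}
For condition~3, the pairwise disjointness of the $sW_T$ across all pairs $(T, s\in\inboundary{K} T)$ yields
\begin{equation*}
\sum_{T} \sum_{s \in \inboundary{K} T} \|v \restr s u_T\|^p \;\leq\; \sum_{T} \lambda((\partial_K T) W_T) \;<\; \varepsilon\,\|w_0+v\|^p,
\end{equation*}
which, interpreting the double sum in the statement as the $L^p$-norm of the corresponding disjoint sum (i.e.\ the $p$-th root of a sum of $p$-th powers), gives the required bound.

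The main subtlety to pin down is setwise freeness of $\pi_F$ on the representation of $F$, equivalent to functional freeness of $F$ itself: this is essential for invoking the Rokhlin lemma but is not a formal consequence of functional freeness of $E$ (for instance, the $G$-invariant sublattice of a free $E$ need not be free), so one must verify it from the context in which the lemma is applied, e.g.\ when $F$ is an elementary sub-$G$-$L_p$-lattice of $E$. A milder technical point is the $\sigma$-additivity of $\eta$, which follows from the disjoint-additivity and continuity of the band projection $u \mapsto v \restr u$ in $L^p$.
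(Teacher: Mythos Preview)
Your argument works, but the paper takes a slicker route. Instead of representing $F$ and then manufacturing the auxiliary measure $\eta(A)=\|v\restr\chi_A\|^p$, the paper represents $E$ itself, using the weak unit $w=w_0+v$ so that $w=\chi_Z$ and in particular $v\leq\chi_Z$ pointwise; the Rokhlin lemma is then applied on the sub-$\sigma$-algebra $\Sigma_F\subseteq\Sigma$ of supports of elements of $F$. The norm estimates follow immediately from $v\restr u_{\mathrm{err}}\leq\chi_{W_{\mathrm{err}}}$ and $v\restr su_T\leq\chi_{sW_T}$, with no need for $\eta$, its $\sigma$-additivity, or the inequality $\|w_0\|^p+\|v\|^p\leq\|w_0+v\|^p$. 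Since $W_T\in\Sigma_F$ only guarantees that \emph{some} element of $F$ has support $tW_T$ (not that $\chi_{tW_T}\in F$), the paper sets $u_T=\sum_{t\in T}t^{-1}u_{T,t}$ for such elements $u_{T,t}$; your direct representation of $F$ makes $\chi_{W_T}\in F$ automatically, which is a small gain on your side. Your reading of condition~3 as the $\ell^p$-combination $(\sum\|v\restr su_T\|^p)^{1/p}$, equivalently $\bigl\|\sum_{T,s}v\restr su_T\bigr\|$, is exactly what the paper establishes as well.

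The subtlety you isolate at the end is genuine and worth stressing: applying the Rokhlin lemma on $\Sigma_F$ (in the paper's version) or on your representation of $F$ requires the induced action there to be setwise free, and this does \emph{not} follow from functional freeness of $E$ for an arbitrary $G$-invariant closed sublattice. Take $G=\mathbb{Z}/2\mathbb{Z}$ acting on $L^p[0,1]$ by $x\mapsto 1-x$ and let $F$ be the sublattice of symmetric functions: $E$ is functionally free, $\chi_{[0,1]}\in F$ is a weak unit for $E$, yet $G$ acts trivially on $\Sigma_F$ and the conclusion of the lemma fails. The paper's own proof passes over exactly this point. In every later application in the paper $F$ is either $E$ itself or a $G$-invariant band in a functionally free lattice, hence itself functionally free, so nothing breaks downstream --- but your instinct that the hypothesis on $F$ needs strengthening is correct.
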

\begin{proof}
	First notice that $w = w_0 + v$ is a weak unit of $E$ as well. 
	By the representation \Cref{representation theorem}, we may assume $E$ to be the $G$-$L_p$ lattice over a measure space $(Z,Σ,ν)$ with total measure $ν(Z) = \norm{w}^p$ and $w = \charfun{Z}$.
	
	We now replace $ν$ with the equivalent probability measure $μ= ν/ν(Z)$ and apply \Cref{amenable rokhlin lemma} to $G \actson (Z,Σ_F,μ)$, where $Σ_F$ is the collection of the supports of the elements of $F$, which is a \tsigma-algebra.
	This yields, for each $T∈ £T$, some $T$-set $W_T ∈ Σ_F$ satisfying the following conditions:
	\begin{enumerate}
		\item if $T,S ∈ £T$ are different, then $TW_{T}$ and $SW_{S}$ are disjoint,
		\item $ν§(\bigcup_{T∈£T} T W_T) > ν(Z)(1-ε) ⩾ ν(Z) - ε\norm{w}^p$,
		\item $\sum_{T∈£T} ν§((\boundary_K T) W_T) < εν(Z) ⩽ ε\norm{w}^p$.
	\end{enumerate}
	By definition of $Σ_F$, there is some $u_{\mathrm{err}} ∈ F$ whose support is $W_{\mathrm{err}} = Z \setdiff \bigcup_{T∈£T} T W_T$, and for each $t∈T∈£T$, there is $u_{T,t} ∈ F$ whose support is $tW_T$. The element
	\begin{equation*}
		u_T = \sum_{t∈T} t^{-1} u_{T,t} 
	\end{equation*}
	has support equal to $W_T$, so that $§{ u_{\mathrm{err}} , t u_T : t∈T∈£T }$ is a maximal disjoint subset of $F$. 
	For the second point, notice that $v \restr u_{\mathrm{err}} ⩽ w = χ\lowersub{Z}$, hence 
	\begin{equation*}
		\norm{v \restr u_{\mathrm{err}} }^p ⩽ \norm{χ\lowersub{Z} \restr u_{\mathrm{err}} }^p = ν(W_{\mathrm{err}}) < ε \norm{w}^p.
	\end{equation*}
	Similarly, for the last point, $v \restr s u_T ⩽ χ\lowersub{Z} \restr s u_T =  χ\lowersub{s W_T}$, and because the $su_T$'s with $s∈ \inboundary{K} T$ and $T∈£T$ are pairwise disjoint, we have
	\begin{align*}
		\norm{ \sum_{T∈£T}\, \sum_{s∈\inboundary{K} T}  v\restr s u_T}[Big]^p
		&= \sum_{T∈£T}\, \sum_{s∈\inboundary{K} T} \norm{ v\restr s u_T}^p\\
		&⩽ \sum_{T∈£T}\, \sum_{s∈\inboundary{K} T} ν(sW_T)
		⩽ \sum_{T∈£T} ν§((\boundary_K T) W_T)
		<  ε \norm {w}^p \qedhere
	\end{align*}
\end{proof}

The lemma we have just proved has still some measure theoretical flavour, since the decomposition of the element $v∈E$ is performed at the level of support, as highlighted by the use of restrictions. In order to axiomatise the class of free $G$-$L_p$ lattices, we need a characterisation of functional freeness purely in the language of Banach lattices. 

By sacrificing the conditions on the boundaries, we may consider, for each non-identical $g∈G$, a tile of $G$ made up of powers of $g$. We will then show that by applying the previous lemma to this tile and some $u∈E_+$, we can find an element $v∈E$ disjoint from $gv$ and satisfying $0<v<u$, up to a small error.

\begin{defin}
	Let $g∈G$. If $g$ has finite order we define its \emph{tiling number} $\tilenum(g)$ to be equal to its order. If $g$ has infinite order, we set $\tilenum(g) = 2$.
\end{defin}

\begin{lemma}
	For any $g∈G$, the set $T = §{ g^i : 0⩽ i< \tilenum(g) }$ tiles $G$.
\end{lemma}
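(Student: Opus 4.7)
The plan is to split on whether $g$ has finite or infinite order, which is exactly the case split built into the definition of $\tilenum(g)$.

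If $g$ has finite order $n = \tilenum(g)$, then $T = \{g^i : 0 \leq i < n\}$ is precisely the cyclic subgroup $H = \langle g \rangle$. So I would simply take $C$ to be a set of representatives for the right cosets $H\backslash G$. Then the family $\{Tc : c \in C\} = \{Hc : c \in C\}$ is by definition a partition of $G$, and each product $tc$ with $t \in T$, $c \in C$ is unique (so $\lvert Tc \rvert = \lvert T\rvert$). This case is essentially automatic.

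If $g$ has infinite order, then $T = \{1_G, g\}$ and $H = \langle g\rangle \cong \mathbb{Z}$. Here I would first tile $H$ itself by taking $C_H = \{g^{2k} : k \in \mathbb{Z}\}$: every element of $H$ is uniquely of the form $g^{2k}$ or $g \cdot g^{2k}$, so $\{Tc : c \in C_H\}$ partitions $H$. Now I would extend this to all of $G$: pick a transversal $R$ for the right cosets $H \backslash G$, and set
\begin{equation*}
    C = C_H \cdot R = \{g^{2k} r : k \in \mathbb{Z},\ r \in R\}.
\end{equation*}
For each $r \in R$, the sets $\{T g^{2k} r : k \in \mathbb{Z}\}$ partition $Hr$ (by shifting the partition of $H$ above on the right by $r$), and since the cosets $Hr$ partition $G$, the family $\{Tc : c \in C\}$ partitions $G$ as required.

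There is no real obstacle here; the only thing to be careful about is the choice of coset representatives in the infinite-order case, where $C$ is infinite but the shape $T$ is finite, as the definition of a tile demands. I would write both cases together in a single short paragraph, perhaps observing that in both settings the key combinatorial fact is that $T$ tiles $H = \langle g\rangle$ with center set $C_H$ (equal to $\{1_G\}$ in the finite case and to $\{g^{2k}\}_{k \in \mathbb{Z}}$ in the infinite case), after which the decomposition of $G$ into right cosets of $H$ immediately lifts this to a tiling of $G$.
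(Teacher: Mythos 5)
Your proof is correct and follows essentially the same route as the paper: tile $H=\langle g\rangle$ by $T$ (trivially in the finite-order case, via $\{g^{2k}\}_{k\in\mathbb{Z}}$ in the infinite-order case) and then lift this to $G$ using a transversal of the right cosets of $H$, which is exactly the paper's choice of one representative per orbit of $H$ acting by left multiplication. No gaps.
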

\begin{proof}
	Denote by $H$ the subgroup generated by $g$ and consider the action of $H$ on $G$ by multiplication on the left. Then define a set $C$ by choosing one element from each orbit, so that $§{Hc : c ∈ C}$ is a partition of $G$. If $g$ has finite order, then $H = T$ and the assertion is proved.	
	If instead $g$ has infinite order, then $H$ is the disjoint union $\bigcup_{k∈\zz} T g^{2k}$ and thus $§{Tg^{2k}c : k∈\zz, c ∈ C}$ is a partition  of $G$, witnessing that $T$ tiles $G$.
\end{proof}

\begin{theo}[characterisation of functional freeness]\label{characterisation of functional freeness}
	Let $G$ be a countable amenable group. 
	A $G$-$L_p$ lattice $E$ is functionally free if and only if for each $g∈G\setdiff§{1_G}$, each $u∈E_+$, and any $ε>0$, there exist $v$ and $w$ in $E_+$ such that
	\begin{enumerate}
		\item $v ⩽ u + w$ and $\norm{w} ⩽ ε$,
		\item $\norm{v \meet g v} ⩽ ε$,
		\item $\norm{v} ⩾ \frac 13 \norm{u} - ε$.
	\end{enumerate}
\end{theo}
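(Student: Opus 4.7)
The proof has two directions. For the \emph{backward} implication, suppose the characterisation holds but $E$ is not functionally free, so some $g \neq 1_G$ and $u \in E_+$ with $\|u\| > 0$ admit no positive $v \in \band{u}$ disjoint from $gv$. Translating to the band algebra via \Cref{band algebra isom}, the induced automorphism $\sigma_g$ satisfies $b \wedge \sigma_g b > 0$ for every $0 < b \leq \band{u}$; a short case split (whenever $\sigma_g b \neq b$, one of $b \setminus \sigma_g b$ and $b \setminus \sigma_g^{-1} b$ is nonzero and disjoint from its $\sigma_g$-image) forces $\sigma_g b = b$ for all such $b$. By \Cref{aut of L0}, combined with the isometry property of $g$ on $L^p$, this makes $g$ act as the identity on $\band{u}$. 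Now apply the characterisation at $u$ with $\epsilon < \|u\|/9$ and project $v$ onto $\band{u}$ to obtain $v' = P_{\band{u}}v$: from $v \leq u + w$ one has $\|v - v'\| \leq \|w\| \leq \epsilon$, so $\|v'\| \geq \frac{1}{3}\|u\| - 2\epsilon$; but $gv' = v'$ gives $\|v'\| = \|v' \wedge gv'\| \leq \|v \wedge gv\| \leq \epsilon$, a contradiction.

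For the \emph{forward} implication, fix $g$, $u$, and $\epsilon > 0$. After passing to the $G$-invariant band generated by the orbit of $u$ and choosing a weak unit $w_0 = \sum_{h \in G} c_h \, hu$ with $\|w_0\|$ arbitrarily small (possible because $G$ is countable and each $\|hu\| = \|u\|$), I may assume $E$ has a weak unit. The tile is selected according to $g$: if $g$ has finite order $n$, take $T = \langle g \rangle$, which is a tiling family with empty $K$-boundary for any $K \subseteq \langle g \rangle$; if $g$ has infinite order, take $T = \{1_G, g, \ldots, g^{N-1}\}$ for large $N$, which tiles $G$ and is $(K, 2/N)$-invariant for $K = \{1_G, g, g^{-1}\}$. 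Applying \Cref{rokh lemma} with a small parameter $\epsilon_0$ yields $u_\mathrm{err}$ and $u_T$ with $\{u_\mathrm{err}\} \cup \{t u_T : t \in T\}$ maximal disjoint in $E$ and with small $\|u \restr u_\mathrm{err}\|$ and $\sum_{s \in \inboundary{K} T} \|u \restr s u_T\|$.

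The combinatorial core is selecting $V \subseteq T \cap g^{-1}T$ with $V \cap gV = \emptyset$: these two conditions ensure that the bands of $v = \sum_{t \in V} u \restr tu_T$ and of $gv = \sum_{t \in V} gu \restr (gt)u_T$ lie in disjoint parts of the maximal family, so that $v \wedge gv = 0$ exactly. The graph on $T \cap g^{-1}T$ with edges $\{t, gt\}$ is a cycle $C_n$ in the finite-order case and a path on $N-1$ vertices in the infinite-order case; both have chromatic number at most $3$, so by pigeonhole applied to the weights $\|u \restr tu_T\|^p$, some colour class $V$ collects $\sum_{t \in V} \|u \restr tu_T\|^p \geq \frac{1}{3} \sum_{t \in T \cap g^{-1}T} \|u \restr tu_T\|^p$. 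Since the total mass over $T$ equals $\|u\|^p - \|u \restr u_\mathrm{err}\|^p$ and the missing endpoint $\|u \restr g^{N-1}u_T\|^p$ in the infinite-order case is controlled by the $K$-boundary bound, this yields $\|v\|^p \geq \frac{1}{3}\|u\|^p$ up to an error tending to $0$ with $\epsilon_0$. Extracting $p$-th roots via $(A - B)^{1/p} \geq A^{1/p} - B^{1/p}$ and using $3^{-1/p} \geq \frac{1}{3}$ for $p \geq 1$ gives $\|v\| \geq \frac{1}{3}\|u\| - \epsilon$ once $\epsilon_0$ is chosen sufficiently small. Since $v \leq u$, we may take $w = 0$.

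The main subtlety lies in the coordination between the combinatorial choice of $V$ and the $K$-boundary estimate: restricting $V$ to the interior $T \cap g^{-1}T$ is needed to guarantee $gV \subseteq T$ and hence $v \wedge gv = 0$ exactly rather than merely small; this removes the endpoint $g^{N-1}$ in the infinite-order case, but the mass lost there is precisely the quantity controlled by the $K$-boundary bound from \Cref{rokh lemma}.
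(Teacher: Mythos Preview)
Your proof is correct in both directions, but takes a different route from the paper in each.

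\textbf{Backward direction.} You argue by contradiction through the representation: if functional freeness fails at $u$, then the induced Boolean automorphism fixes every $b\le\band{u}$, hence $g$ acts as the identity on $\band{u}$, and this collides with the norm inequalities. The paper instead works directly in the lattice: from the hypothesised $v,w$ it forms $v'=(v-w)^{+}\le u$ and then
\[
v'' \;=\; v' - \bigl((v'\meet gv')\vee(v'\meet g^{-1}v')\bigr),
\]
checks that $v''\in\band{u}$, $v''\perp gv''$, and $\norm{v''}\ge\frac13\norm{u}-4\varepsilon>0$ for $\varepsilon<\norm{u}/12$. Your argument is more structural and leans on the representation machinery (band-algebra isomorphism, the form of $\tilde\sigma$, and implicitly the Radon--Nikodym factor being $1$ on $\band{u}$); the paper's is a self-contained Banach-lattice computation. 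Your justification ``by \Cref{aut of L0} combined with the isometry property'' is terse: what is really being used is that $\sigma_g$ fixing all $b\le\band{u}$ forces both $T_{\sigma_g}$ and the density factor to be trivial there.

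\textbf{Forward direction.} Here the paper is considerably simpler than your argument. It always uses the tile $T=\{g^{i}:0\le i<\operatorname{tn}(g)\}$ --- so of size $2$ when $g$ has infinite order --- together with $K=\{1_G\}$. With this choice $\partial_KT=\varnothing$, so the boundary clause of \Cref{rokh lemma} is vacuous and no invariance parameter is needed. The decomposition of $u-w$ into $u\restr u_{\mathrm{even}}$, $u\restr u_{\mathrm{odd}}$, $u\restr u_{\mathrm{rem}}$ (even/odd powers of $g$, plus one leftover level when $\operatorname{tn}(g)$ is odd) gives three disjoint pieces, one of norm at least $\frac13\norm{u-w}$, each automatically disjoint from its $g$-image because $g$ shifts even levels to odd ones (and, in the size-$2$ case, $g^{2}u_T\perp gu_T$ follows from $gu_T\perp u_T$). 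Your approach --- taking a long tile $\{1,\dots,g^{N-1}\}$ with $K=\{1,g,g^{-1}\}$, invoking the full boundary estimate, and then $3$-colouring the resulting path --- is correct and is essentially the same idea once one observes that even/odd/remainder \emph{is} a $3$-colouring of the cycle $C_n$; but in the infinite-order case you are doing unnecessary work. The paper's choice of tile of size $2$ eliminates the need for boundary control and for the $p$-th-root gymnastics.
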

\begin{proof}
	Suppose first that the action is functionally free and fix $g∈G\setdiff§{1_G}$, $u∈E_+$, and $ε>0$. By the previous lemma, we know that $T = §{g^i : 0⩽ i < \tilenum(g)}$ tiles $G$, so we can apply \Cref{rokh lemma} to $u$, with $E=F$ equal to the $\gen{g}$-invariant band generated by $u$, with $ε$ replaced by $(ε/\norm{2u})^p$, and $£T = §{T}$, $K=§{1_G}$, $δ=0$. This yields some positive $u_{\mathrm{err}}$ and $u_T$ such that $§{ u_{\mathrm{err}} } ∪ §{ t u_T : t∈T }$ is a maximal disjoint subset of $F$, and $w=u \restr  u_{\mathrm{err}}$ has norm at most $ε$.
	
	Define now
	\begin{equation*}
		u_{\mathrm{even}} = \sum_{k<\floor{\frac{\tilenum(g)}2}} g^{2k} u_T,
		\quad
		u_{\mathrm{odd}} = \sum_{k<\floor{\frac{\tilenum(g)}2}} g^{2k+1} u_T,
		\quad
		u_{\mathrm{rem}} = \sum_{k<\tilenum(g)} g^{k} u_T - u_{\mathrm{even}} - u_{\mathrm{odd}},
	\end{equation*}
	so that, together with $u_{\mathrm{err}}$, they still form a maximal disjoint subset of $F$, and thus
	\begin{equation*}
		u \restr u_{\mathrm{even}} + u\restr u_{\mathrm{odd}} + u \restr u_{\mathrm{rem}} = u - w.
	\end{equation*}
	One of the three addends on the left-hand side above must have norm at least $\frac13 \norm{u-w} = \frac 13 \norm{u} - \frac{ε}{3}$. We may denote this element by $v$, and notice that it satisfies the three conditions above.
	
	We shall now deal with the other direction. Let then $g∈G\setdiff§{1_G}$ and  $u∈E_+$. Set $n=\tilenum(g)$ and choose $ε< \norm{u} / 12$. Suppose now that there exist $v$ and $w$ as in the statement of the present lemma. We set
	\begin{equation*}
		v' = (v-w)^+ = v-(v\meet w),
	\end{equation*}
	so that $0 ⩽ v' ⩽ u$ and 
	\begin{equation*}
		\norm{v'} ⩾ \norm{v} - \norm{w} ⩾ \frac 13 \norm{u} - 2ε.
	\end{equation*}
	Define now 
	\begin{equation*}
		v'' = v' - §( (v'\meet gv') \join (v' \meet g^{-1}v') ),
	\end{equation*}
	and notice that $v''$ is still in the band generated by $u$, but it is also disjoint from $g v''$. In order to conclude, it remains to show that $v'' ≠ 0$.
	By the second condition in the statement of the lemma, we have  $\norm{v'\meet g^{-1}v'} = \norm{v'\meet gv'} ⩽ \norm{v \meet gv } ⩽ ε$, which implies 
	\begin{equation*}
		\norm{v''} ⩾ \norm{v'} - 2ε ⩾ \frac13 \norm{u} - 4ε > 0,
	\end{equation*}
	by the choice of $ε$.
\end{proof}

\subsection{Model theory of $G$-$L_p$ lattices}
The reader unfamiliar with continuous model theory is referred to \cite{cont-logic}.
We denote by $£L_{\mathrm{BL}}$ the continuous language of Banach lattices, as defined in \cite{canonicalBases} or in \cite{lpIndep}. Given a countable group $G$, we expand $£L_{\mathrm{BL}}$ with unitary functions $\dot{g}$ for each element $g∈G$, and we denote this new language by $£L_G$.

Recall that $\alpl$ is the theory of atomless $L_p$ lattices in $£L_{\mathrm{BL}}$. Consider now the theory $T_G$ obtained by adding to $\alpl$  the axioms saying that $G$ acts on a Banach lattice by automorphisms. More precisely, for each $g∈G$ we add an axiom stating that $\dot g$ is an automorphism of normed vector lattices, and for each $g,h,k ∈G$ satisfying $g = hk$, we add
\begin{equation*}
	\sup_{x} \norm{ \dot g (x) - \dot h §(\dot k (x))} =0.
\end{equation*}
Notice that if $ρ$ is an action of $G$ on a Banach lattice $E$, then $ρ(g)$ is the interpretation of $\dot g$ in the $£L_G$-structure $(E,ρ)$. For simplicity, we will henceforth use the same symbol $g$ for the element of $G$, the automorphism $ρ(g)$, and the function symbol $\dot g$.

Using the characterisation in \Cref{characterisation of functional freeness}, the notion of functional freeness can be expressed in $£L_G$ via the sentences
\begin{equation}\label{freeness axiom}
	\adjustlimits \sup_{x} \inf_{y} 
	\max§(
	\norm{(\abs{x}-\abs{y})^-},\;
	\norm{ g\abs{y} \meet \abs{y} }[big],\;
	\frac13 \norm{x} - \norm{y}
	)[bigg]
	=0
	,
	\tag{Freeness$_g$}
\end{equation}
for each non-identical $g ∈ G$. We denote by $\tgfree$ the theory obtained by adding these axioms to $T_G$.

\begin{theo}[model completeness]\label{model completeness of tgfree}
	The theory $\tgfree$ is model complete.
\end{theo}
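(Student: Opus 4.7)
The plan is to invoke the standard Robinson-style criterion: $\tgfree$ is model complete provided that, whenever $M \subseteq N$ are both models of $\tgfree$, the substructure $M$ is existentially closed in $N$. Fix such an inclusion, a quantifier-free formula $\phi(\bar{x}, \bar{y})$ in the language of $G$-$L_p$ lattices, a parameter tuple $\bar{a} \in M^n$, and a tuple $\bar{b} \in N^m$. It suffices to show that, for every $\varepsilon > 0$, there exists $\bar{b}' \in M^m$ with
\begin{equation*}
\bigl|\phi(\bar{a}, \bar{b}')^M - \phi(\bar{a}, \bar{b})^N\bigr| < \varepsilon.
\end{equation*}

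The main tool will be the decomposition lemma (\Cref{rokh lemma}) applied inside $N$. Let $K \subseteq G$ be a finite, symmetric, identity-containing set large enough to include every group element occurring in $\phi$. Choose $\varepsilon_0 > 0$ small (to be tuned at the end) and $\delta > 0$ satisfying the hypotheses of \Cref{rokh lemma}, and invoke \Cref{exact tiling} to get a tiling family $\mathcal{T}$ of $G$ whose shapes are $(K, \delta)$-invariant. Working in the closed $G$-invariant band $B \subseteq N$ generated by $\bar{a}$ and $\bar{b}$, let $F$ be the closed $G$-invariant sublattice of $M \cap B$ generated by $\bar{a}$ together with a weak unit of $B$ lying in $M$ (arranged by introducing an auxiliary coordinate from $M$ that dominates $v = \sum_j |b_j|$; if no such element exists in $M$, first pass to a sufficiently saturated elementary extension, which is still a model of $\tgfree$). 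Applying \Cref{rokh lemma} to $B$, $F$, and $v$ produces elements $u_{\mathrm{err}}, u_T \in F \subseteq M$ for $T \in \mathcal{T}$ such that the family $\{u_{\mathrm{err}}\} \cup \{t u_T : t \in T, T \in \mathcal{T}\}$ is maximally disjoint in $F$ and the norms of the projections of $v$ onto the error slot and onto the $K$-boundary slots are of order $\varepsilon_0^{1/p} \|v\|$.

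Next, build $\bar{b}'$ slot by slot. For each $T \in \mathcal{T}$ and each interior index $t \in T \setminus \partial_K T$, the projection of $\bar{b}$ onto the band of $tu_T$ in $N$ is an $m$-tuple that, together with the analogous projection of $\bar{a}$, records the local lattice-and-norm data needed to evaluate $\phi$ at this tower slot. Crucially, by the choice of $K$ every $g$ occurring in $\phi$ sends an interior slot to an interior slot of the same tower, so the value of $\phi(\bar{a},\bar{b})^N$ is, up to the small boundary and error contributions above, a continuous function of these per-tower local configurations alone. Choose, for each tower $T$, witnesses $c_{j,T,t}$ in the band of $u_T$ in $M$ realising simultaneously, over the already-common projection of $\bar{a}$, the same lattice-norm configuration as the pulled-back tuples $t^{-1}$(projection of $\bar{b}$ onto the band of $tu_T$); then set
\begin{equation*}
b_j' \;=\; \sum_{T \in \mathcal{T}} \; \sum_{t \in T \setminus \partial_K T} \, t \cdot c_{j,T,t} \;\in\; M.
\end{equation*}

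The main obstacle is exactly this last realisation step: reproducing inside the atomless band of $u_T$ in $M$ a configuration given inside the atomless band of $u_T$ in $N$, over the common projection of $F$. This is an amalgamation problem for the theory $\alpl$ of atomless $L_p$ lattices, which has quantifier elimination, so (passing via \Cref{representation theorem} to the underlying measure algebras) it reduces to the classical fact that every finite configuration of measurable functions over a common atomless sub-algebra can be realised in any sufficiently saturated atomless probability algebra extending that sub-algebra. Once this is done, the interior of each tower yields an exact match of the per-tower quantifier-free data, the boundary and error contributions give a total error of order $\varepsilon_0^{1/p}\|v\|$, and a uniform-continuity choice of $\varepsilon_0$ in terms of $\varepsilon$ and of the modulus of uniform continuity of $\phi$ delivers the desired bound.
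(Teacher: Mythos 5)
Your overall strategy mirrors the paper's: Robinson's test, a $(K,\delta)$-invariant tiling family from \Cref{exact tiling}, the decomposition of \Cref{rokh lemma} to produce disjoint slots $tu_T$ lying in $M$, slot-by-slot realisation of the Banach-lattice data via quantifier elimination/model completeness of \alpl\ plus saturation, and uniform continuity to absorb the boundary and error contributions. But there is a genuine gap at the point where you set the stage: you assume $M$ contains a weak unit of the $G$-invariant band $B$ of $N$ generated by $\bar a$ and $\bar b$ (equivalently, an element of $M$ dominating $v=\sum_j\lvert b_j\rvert$ in the band sense). No such element need exist: the witness $\bar b$ can have a component disjoint from the band generated by \emph{all} of $M$ inside $N$ (already $M=L^p[0,1]\subseteq N=L^p[0,2]$, each equipped componentwise with a free action, with $b$ supported on the new half, is a counterexample with both structures models of $\tgfree$). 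Your proposed repair, passing to a sufficiently saturated elementary extension, does not fix this: enlarging $N$ adds nothing on the $M$-side, enlarging $M$ is not available inside $N$ without a separate amalgamation argument, and in any case the obstruction is the relative position of $M$ in $N$ (a part of $N$ disjoint from the band generated by $M$), which persists under elementary extension. As written, your construction only produces candidates $b'_j$ supported in the band generated by $M$, so the contribution of the external component of $\bar b$ to the value of $\phi$ is simply unaccounted for.

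The paper's proof is organised precisely around this issue: it takes both models $\aleph_1$-saturated, splits the witness as $v=v'+v''$ where $v'$ is the projection onto the band of a weak unit $w_0$ of the $G$-invariant band of $M$ generated by the parameters, treats $v'$ essentially as you do, and then handles $v''$ by a separate step, namely ``clearing out space'': by $\aleph_1$-saturation of $M$ there is $w_0'\in M$ generating a $G$-invariant band disjoint from $w_0$; applying \Cref{rokh lemma} inside $M$ to $w_0'$ manufactures fresh disjoint slots $\tilde u_T\in M$, in whose bands one realises the Banach-lattice type of the slices of $v''$ over the empty set (using that atomless $L_p$ lattices contain separably categorical, hence $1$-saturated, sublattices); the two estimates are then combined. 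Your treatment of the internal part is otherwise essentially the paper's computation (disjointness of the slots, compatibility of band projections with the lattice operations, and the inclusion $T\setminus g^{-1}T\subseteq\partial_K T$ for $g\in K$), up to minor slips such as the claim that each $g\in K$ maps interior slots to interior slots: it maps them into $T$, not necessarily into its $K$-interior, though this does not affect the estimate. Note also that the saturation of $M$ which your realisation step silently uses must be assumed from the outset, as the paper does.
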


To prove this result, we will follow the same basic lines as in the proof of \cite[Lemma~3.5]{scielzo}, albeit with the added steps required to deal with the fact that here we have multiple towers and we need to keep their boundaries small. 

\begin{proof}
	Take two $\aleph_1$-saturated models $M⊆ N$ of $\tgfree$, a quantifier-free basic formula $φ(x,y)$, some positive elements $w∈M$ and $v∈N$, and a positive number $ε$.
	It suffices to find $\hat v ∈ M$ such that $\abs{ φ(\hat v,w) - φ(v,w) } < ε$.
	As $φ$ is quantifier-free, we can rewrite it as
	\begin{equation*}
		φ(x,w) = ψ(g_1 x, …,g_m x ; w_1,…,w_\ell ) ,
	\end{equation*}
	for some elements $g_1,…,g_m ∈ G$ and some parameters $w_1,…,w_\ell ∈ M$, with no other occurrence of elements of $G$ in $ψ$. We can harmlessly suppose that $w = (w_1,…,w_\ell)$ and that the set $K = §{g_1, …,g_m}$ is symmetric and contains the identity (if not, just replace it by $KK^{-1}$). We will also use the name $K$ for both the set and the tuple of the $g_i$'s, so in particular the equality above can be written more concisely as $φ(x,w) = ψ(Kx, w)$. 
	
	As $ψ$ is a continuous combination of norms of terms $\norm{r(Kx,w)}$, there is some $δ_0>0$ such that, for all $\hat v ∈ M$, the inequality $\abs{ φ(v,w) - φ(\tilde v,w) } < ε$ holds if 
	\begin{equation*}
		\abs{
			\norm{ r(K v, w)} - \norm{ r(K \hat v, w)}
		}[Big]
		⩽ δ_0 
	\end{equation*}
	for all terms $r$ in $ψ$. This means that we just need to find  $\hat v ∈ M$ satisfying the inequality above.
	
	Consider now the smallest $G$-invariant band in $M$ containing $w$---let us call it $B$. This is a principal band, as it is generated for instance by
	\begin{equation*}
		w_0 = \sum_{g∈G} \frac{1}{2^{ι(g)}} gw,
	\end{equation*}
	where $ι$ is any injection of $G$ into $\zz_{>0}$. This means that $w_0$ is a weak unit of $B$, which is an $L_p$ lattice in its own right, acted upon by $G$.
	We shall now decompose $v$ into two disjoint components: $v' =  v \restr w_0$ in $C=\band[N]{w_0}$, i.e., the band of $w_0$ in $N$, and $v'' = v-v'$ in $C^\perp$, the disjoint complement of $w_0$ relative to $N$. Notice that $C$ is a $G$-invariant sublattice of $N$, and contains $B$ as a sublattice.
	
	As $\norm{r(\;\cdot\;, w)}{}^N$ is uniformly continuous, there exists some $δ_1>0$ such that, for all $a_1,a_2∈N^{\card{K}}$, 
	\begin{equation}\label{proof: unif continuity}
		d_\infty(a_1,a_2) <δ_1 \implies \abs{\norm{r(a_1,w)} - \norm{r(a_2,w)}}[Big] < \frac{δ_0}{4},
	\end{equation}
	where $d_\infty$ is defined by $d_\infty§((x_i)_i,(y_i)_i) = \max_i \norm{x_i-y_i}$.
	Consider now
	\begin{equation*}
		ε_0 = §( \frac{δ_1}{2\norm{w_0+v'}+4\norm{v''}} )^p
	\end{equation*}
	and let $δ$ be as small as in \Cref{amenable rokhlin lemma}, with $ε$ replaced by $ε_0$. 
	By \Cref{exact tiling}, there is a tiling family $£T$ of $G$ whose shapes are all $(K,δ)$-invariant, so that we can apply \Cref{rokh lemma} to $v'$, with $F= B$ and $E= C$.
	We can then find positive elements $u'_{\mathrm{err}}, u'_{T}∈F ⊆ M$ for each $T∈£T$,  such that
	\begin{enumerate}[label=(\alph*)]
		\item \label{proof: eps lemma 1}$§{ u'_{\mathrm{err}} } ∪ §{ t u'_T : t∈T∈£T }$ is a maximal disjoint subset of $F$,
		\item \label{proof: eps lemma} $\displaystyle \sum_{T∈£T}\, \sum_{s∈\inboundary{K} T} \norm{ v'\restr s u'_T} + \norm{ v' \restr  u'_{\mathrm{err}} } ⩽ 2ε_0^{1/p} \norm{w_0+v'} ⩽  δ_1$.
	\end{enumerate}
	Notice that since the elements $u'_{\scriptscriptstyle (\,⋅\,)}$ are in $M$, so are the restrictions of the parameters $w\restr u'_{\scriptscriptstyle (\,⋅\,)}$. We can do the same for $v''$ in place of $v'$, but with $w_0 = \sum_{g∈G} \frac{1}{2^{ι(g)}} v''$ and $F=E= C^{\perp}$, and find elements $u''_{\scriptscriptstyle (\,⋅\,)}$ satisfying \ref{proof: eps lemma 1} and \ref{proof: eps lemma} above, with the single primes replaced by double primes.

	For each $T∈£T$ and $s∈T$, we define now
	\begin{equation*}
		v'_{T,s} = v'\restr s u'_T \quad \text{ and }\quad  v'_{\mathrm{err}} = v'\restr u'_{\mathrm{err}},
	\end{equation*}
	and similarly for the doubly primed elements. 
	Observe that these restrictions $v'_{\mathrm{err}}$, $v''_{\mathrm{err}}$, $v'_{T,s}$, and $v''_{T,s}$ with varying $s∈T∈£T$ form a disjoint decomposition of $v$ in $N$. 
	
	We will deal with $v'$ and $v''$ separately, starting with $v'$.
	Consider the Banach-lattice type of $(s^{-1} v'_{T,s} : s ∈ T∈£T)$ over $§{ s^{-1} w \restr u'_T : s ∈ T∈ £T }* ∪ §{ w\restr u'_{\mathrm{err}} }* ∪ §{ u'_T : T∈£T }*$ in the reduct $N \restr \lBL$. 
	By model completeness of $\alpl$, this is a type in $M \restr \lBL$, and by saturation, it is realised by some tuple $(s^{-1}\hat v'_{T,s} : s ∈ T∈£T)$ in $B$, with each $s^{-1}\hat v'_{T,s}$ supported by the corresponding $u'_T$. This means in particular that the elements $\hat v'_{T,s}$, with varying $s ∈ T$ and $T∈ £T$, are pairwise disjoint. 
	
	For any $g∈K$, define 
	\begin{equation*}
		v'_g = \sum_{T∈£T} \sum_{\substack{s∈ T \\ g^{-1}s ∈ T}} v'_{T,g^{-1}s},
	\end{equation*}
	and $\hat v'_g$ in the same way. 
	As 
	\begin{math}
		v'_g = \sum_{T∈£T} \sum_{\substack{s∈ T ∩ g^{-1}T}} v'_{T,s}
	\end{math}
	and $T\setdiff g^{-1}T ⊆ \inboundary{K} T$, 
	we have
	\begin{align*}
		\norm{gv'-gv'_g} = \norm{v' - v'_g} 
		&= \sum_{T∈£T} \sum_{s∈ T \setdiff g^{-1}T} \norm{v'_{T,s}} + \norm{v'_{\mathrm{err}}}\\
		&⩽ \sum_{T∈£T} \sum_{ s∈ \inboundary{K} T } \norm{v'_{T,s}} + \norm{v'_{\mathrm{err}}} ⩽ δ_1,
	\end{align*} 
	by point \ref{proof: eps lemma} above. It then follows from \eqref{proof: unif continuity} that 
	\begin{equation*}
		\abs{ \norm{r(Kv',w)} - \norm{r(g v'_g:g∈K,w)} }[Big]
		⩽ \frac{δ_0}{4}
	\end{equation*}
	and the same is true if we replace $v'$ with $\hat v'$. We shall now prove that
	\begin{equation}\label{proof: eq 3}
		\norm{r(g v'_g:g∈K,w)} = \norm{r(g \hat v'_g:g∈K,w)},
	\end{equation}
	from which it will follow
	\begin{equation}\label{proof: dis v}
		\abs{ \norm{r(Kv',w)} - \norm{r(K\hat v',w)} } 
		⩽ \frac{δ_0}{2}.
	\end{equation}

	For convenience, if $h∉T$, we set $v'_{T,h} = 0$. Notice that $g v'_{T,g^{-1}s}$ is supported by $s u'_{T}$ (trivially when $g^{-1}s ∉T$).
	To compare the terms in the $v_g$'s and thus prove \eqref{proof: eq 3}, we exploit the disjoint decomposition given by the $su'_T$'s and $u'_{\mathrm{err}}$.  Since the restriction commutes with every other vector lattice operation, this yields
	\begin{align*}
		\norm{r(g v'_g:g∈K,w)} 
		&= \sum_{T∈£T} \sum_{s∈ T} \norm{r(g v'_g \restr s u'_{T} : g∈K, w  \restr s u'_{T} ) } \\[-1ex]
		&\hspace{5em}+ \norm{r(g v'_g\restr u'_{\mathrm{err}} : g∈K, w \restr u'_{\mathrm{err}} )  } \\
		\intertext{Observe that $g v'_g \restr s u'_{T} = g (v'_g \restr g^{-1}s u'_{T})$, which is precisely $gv'_{T,g^{-1}s}$ (even if $g^{-1}s ∉ T$), and no $v'_g$ has any component supported by $u'_{\mathrm{err}}$, so the sum above is equal to}
		&\mathrel{\phantom{=}} \sum_{T∈£T} \sum_{s∈ T} \norm{r(g v'_{T,g^{-1}s} : g∈K, w  \restr s u'_{T} ) } + \norm{r(\vec{0}, w \restr u'_{\mathrm{err}} )  },\\
		\intertext{and by applying the corresponding $s^{-1}∈T^{-1}$ to each term, this can be rewritten as}
		&\mathrel{\phantom{=}} \sum_{T∈£T} \sum_{ s∈ T} \norm{r(s^{-1}g v'_{T,g^{-1}s}  : g∈K, s^{-1}w \restr u'_{T}) }   
		+ \norm{r(\vec{0}, w \restr u'_{\mathrm{err}} )  }. \\
		\intertext{In this sum, the first argument of $r$ is always a tuple of zeroes and elements of the form $t^{-1} v'_{T,t}$ with $t∈T ∈ £T$, hence by type equivalence, it is equal to }
		&\mathrel{\phantom{=}} \sum_{T∈£T} \sum_{ s∈ T } \norm{r(s^{-1}g\hat v'_{T,g^{-1}s}  : g∈K , s^{-1}w \restr u'_{T}) }
		+ \norm{r(\vec{0}, w \restr u'_{\mathrm{err}} )  }.
	\end{align*}
	Repeating the first three steps with $v'_{(\,⋅\,)}$ replaced by $\hat v'_{(\,⋅\,)}$, we get that the last sum is equal to $\norm{r(g \hat v'_g:g∈K,w)}$, and this proves \eqref{proof: eq 3}.
	
	We shall now deal with the external part $v''$ of $v$. We shall firstly clear out some space in $M$ for a realisation of $v''$ by finding an element $w_0' ∈ M$ generating a $G$-invariant band disjoint from $w_0$, and thus disjoint from $C$ when regarded as an element of $N$.	This is possible because $M$ is $\aleph_1$-saturated. 
	
	Then, we apply \Cref{rokh lemma} to $w_0'$ with $F=E=\band[M]{w_0'}$, and obtain a decomposition of $w_0'$ given by elements $\tilde u_T ∈ M$, for each $T ∈ £T$, and $\tilde u_{\mathrm{err}} ∈ M$.
	At this point, for each $T ∈ £T$, we find a tuple $(s^{-1}\hat v''_{T,s} :s ∈T)$ in $\band[M]{\tilde u_T}$ realising the Banach-lattice type of $(s^{-1}v''_{T,s} :s ∈T)$ over the empty set. This is possible because every atomless $L_p$ lattice contains a separable atomless $L_p$ sublattice, which is separably categorical and thus $1$-saturated. 
	By the choice of the $\tilde u_T$'s, we have that the $\hat v''_{T,s}$'s with varying $s∈T∈£T$ are pairwise disjoint.
	We can then proceed as we did before with $v'$ and $\hat v'$ to deduce 
	\begin{equation*}\label{}
		\abs{ \norm{r(Kv'',w)} - \norm{r(K\hat v'',w)} }[Big]
		⩽ \frac{δ_0}{2}.
	\end{equation*}
	This inequality, together with \eqref{proof: dis v}, completes the proof.
\end{proof}

In order to prove that $\tgfree$ is the model companion of $T_G$, we will need the existence of products of arbitrary semi-finite measure spaces. 
\begin{fact}[{\cite[251I and 251J]{fremlin2}}]\label{product measure space}
	Let $(X,£F,μ)$ and $(Y,£G,ν)$ be semi-finite measure spaces, and let $£F \otimes £G$ be the \tsigma-algebra generated by $§{A×B : A∈£F, B∈£G}$. There exists a semi-finite measure $λ$ on $(X×Y,£F\otimes £G)$ satisfying
	\begin{math}
		λ(A×B) = μ(A)ν(B),
	\end{math}
	for all $A∈£F$ and $B∈£G$.
\end{fact}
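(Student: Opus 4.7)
The plan is to construct $λ$ as the restriction of a Carathéodory outer measure built on finite-measure rectangles, and then to replace it by its semi-finite version in order to obtain the desired property. First I would define, for every $E ⊆ X × Y$,
\begin{equation*}
	λ^*(E) = \inf \left\{ \sum_{n<ω} μ(A_n)\, ν(B_n) : E ⊆ \bigcup_{n<ω} A_n × B_n,\ A_n ∈ £F,\ B_n ∈ £G,\ μ(A_n), ν(B_n) < ∞ \right\}.
\end{equation*}
A standard Carathéodory argument shows that $λ^*$ is an outer measure and that every measurable rectangle satisfies the Carathéodory measurability condition, so that $λ^*$ restricts to a complete measure on a \tsigma-algebra containing $£F ⊗ £G$.

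Next I would verify the rectangle formula $λ^*(A × B) = μ(A)\, ν(B)$. When both factors are finite, the trivial single-rectangle cover gives the upper bound, while the lower bound is obtained by applying \tsigma-additivity of $μ$ and $ν$ to the characteristic-function inequality $\charfun{A × B} ⩽ \sum_n \charfun{A_n × B_n}$ and integrating twice. When at least one of the factors is infinite, say $μ(A) = ∞$ and $ν(B) > 0$, semi-finiteness of $μ$ produces subsets $A' ⊆ A$ of arbitrarily large finite measure, whence $λ^*(A × B) ⩾ μ(A')\, ν(B)$ can be made arbitrarily large, forcing $λ^*(A × B) = ∞ = μ(A)\, ν(B)$; the symmetric case is identical, and when either factor vanishes the equality is trivial.

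Finally, the restriction of $λ^*$ to $£F ⊗ £G$ may fail to be semi-finite, so I would pass to its semi-finite version
\begin{equation*}
	λ(E) = \sup \left\{ λ^*(F) : F ⊆ E,\ F ∈ £F ⊗ £G,\ λ^*(F) < ∞ \right\}.
\end{equation*}
This modification does not alter the value on any rectangle (the semi-finiteness of $μ$ and $ν$ allows any infinite-measure rectangle to be exhausted from below by finite-measure subrectangles whose products grow without bound), so $λ(A × B) = μ(A)\, ν(B)$ is preserved. The main obstacle I expect is verifying that $λ$ is still a countably additive measure on the whole of $£F ⊗ £G$, and not merely a well-behaved set function on sets of finite $λ^*$-measure: this requires a monotone-class argument showing that the family of sets on which $λ$ respects disjoint countable unions is a \tsigma-algebra containing the measurable rectangles, which one checks by approximating disjoint unions by their finite-measure subsets and invoking the countable additivity of $λ^*$. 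Once this is done, the semi-finiteness of $λ$ is immediate from its very definition.
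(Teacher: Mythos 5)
The paper offers no proof of this Fact: it is quoted from Fremlin (251I--251J), whose construction -- the Carath\'eodory outer measure generated by finite-measure rectangles followed by the c.l.d.\ (or semi-finite) modification -- is essentially what you reproduce, so your route is the intended one. There is, however, a genuine error in your handling of rectangles with a null factor. The claim that $\lambda^*(A\times B)=\mu(A)\nu(B)$ is ``trivial'' when one factor vanishes is false: take $\mu$ to be counting measure on an uncountable set $X$ (which is semi-finite), $A=X$, and $B$ a nonempty $\nu$-null set. Then $\mu(A)\nu(B)=0$ under the convention $0\cdot\infty=0$ (the convention the statement needs), yet $A\times B$ admits no countable cover by rectangles with finite-measure sides at all: fixing $y\in B$, the sides $A_n$ of such a cover would exhibit $A$ as a countable union of finite-measure sets. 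Hence $\lambda^*(A\times B)=\infty\neq\mu(A)\nu(B)$. Consequently your later assertion that passing to the semi-finite version ``does not alter the value on any rectangle'' is also false -- it alters it exactly here, from $\infty$ to $0$, and performing that repair is the whole reason Fremlin works with the c.l.d.\ product measure rather than the primitive one.

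The construction itself can be salvaged, but the case needs an argument you did not give. If $\nu(B)=0$ and $F\subseteq A\times B$ lies in $\mathcal F\otimes\mathcal G$ with $\lambda^*(F)<\infty$, intersect an admissible cover $\bigcup_n A_n\times B_n$ of $F$ with $X\times B$ to obtain the cover $\bigcup_n A_n\times(B_n\cap B)$, so that $\lambda^*(F)\le\sum_n\mu(A_n)\,\nu(B_n\cap B)=0$; hence the supremum defining $\lambda(A\times B)$ is $0=\mu(A)\nu(B)$, as required. A smaller point in the same spirit: in the unbounded case you write $\lambda^*(A\times B)\ge\mu(A')\nu(B)$, which presupposes $\nu(B)<\infty$; when $\nu(B)=\infty$ you should first choose $B'\subseteq B$ with $0<\nu(B')<\infty$ by semi-finiteness and use $\mu(A')\nu(B')$. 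Conversely, the step you single out as the main obstacle is not one: the semi-finite version of any measure on a $\sigma$-algebra is again a countably additive, semi-finite measure by a direct computation with countable disjoint unions (approximate each piece from inside by sets of finite measure), and no monotone-class argument is needed. The real delicacy of the statement is the rectangle formula in the $0\cdot\infty$ case, which is precisely where your write-up goes wrong.
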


\begin{theo}
	$\tgfree$ is the model companion of $T_G$.
\end{theo}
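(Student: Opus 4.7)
The plan is to combine \Cref{model completeness of tgfree} with the observation that every model of $\tgfree$ is trivially a model of $T_G$; what remains to show is that every model of $T_G$ embeds into a model of $\tgfree$. Granting such embeddings, $\tgfree$ satisfies the three defining properties of a model companion.

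Given $(E,\rho)\models T_G$, my first step is to apply \Cref{representation theorem} to assume that $E = L^p(X,\mathcal F,\mu,\pi)$ for some non-singular action $\pi$ of $G$ on a semi-finite measure space $(X,\mathcal F,\mu)$. Next, I would fix once and for all a free, measure-preserving action $\sigma$ of $G$ on an atomless probability space $(Y,\mathcal G,\nu)$—the Bernoulli shift on $(\{0,1\}^{G}, \nu_{1/2})$ is the standard choice and is essentially free since $G$ is countable, hence setwise free (as $\{0,1\}^{G}$ is countably separated). Using \Cref{product measure space}, I form the product measure space $(X\times Y,\mathcal F\otimes\mathcal G,\lambda)$, on which $G$ acts diagonally by $\alpha(g)(x,y)=(\pi(g)(x),\sigma(g)(y))$.

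I then check the two required properties of $\alpha$. Non-singularity is inherited from the factors by evaluating on rectangles and extending to the generated $\sigma$-algebra. For setwise freeness, given $g\neq 1_G$ and a positive-measure $A\subseteq X\times Y$, I use \Cref{multicover partition} applied to $\sigma\colon G\actson Y$ and $H=\{1_G,g\}$ to partition $Y$ (and therefore $X\times Y$) into $\{g\}$-sets $X\times Y_n$; some slice $A\cap (X\times Y_n)$ has positive measure and is disjoint from its $\alpha(g)$-image, as required. Since $(X\times Y,\lambda)$ is semi-finite, the equivalence of setwise and functional freeness applies, so $L^p(X\times Y,\lambda,\alpha)$ is functionally free, and atomlessness follows from atomlessness of $Y$. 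Hence this concrete $G$-$L_p$ lattice is a model of $\tgfree$.

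To produce the embedding, I define $\Phi\colon L^p(X,\mu)\to L^p(X\times Y,\lambda)$ by $\Phi(f)=f\otimes\chi_Y$, i.e.\ the function $(x,y)\mapsto f(x)$. Since $\nu(Y)=1$, Tonelli (applied on the $\sigma$-finite support of $f$) gives $\|\Phi(f)\|^p=\|f\|^p$, and $\Phi$ clearly preserves the pointwise lattice operations, so it is an isometric Banach-lattice homomorphism. The only delicate point—and the expected main obstacle—is $G$-equivariance: unpacking \eqref{induced transformation} for the diagonal action yields $d\lambda\alpha(g)^{-1}/d\lambda = (d\mu\pi(g)^{-1}/d\mu)\otimes (d\nu\sigma(g)^{-1}/d\nu)$, and the second factor equals $1$ precisely because $\sigma$ is measure-preserving; combined with $T_{\alpha(g)^{\natural}}(f\otimes \chi_Y)=T_{\pi(g)^{\natural}}(f)\otimes\chi_Y$ (using that $\sigma(g)$ fixes $Y$ setwise), this gives $\tilde{\alpha}(g)\circ\Phi=\Phi\circ\rho(g)$. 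Therefore $\Phi$ is a $G$-$L_p$-lattice embedding of $E$ into a model of $\tgfree$, completing the proof.
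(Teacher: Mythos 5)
Your proposal is correct in outline and follows the paper's two-step strategy exactly: \Cref{model completeness of tgfree} plus an embedding of an arbitrary model of $T_G$, represented via \Cref{representation theorem}, into a concrete free action obtained by multiplying the representing measure space by an auxiliary $G$-space (using \Cref{product measure space}). The genuine difference is the auxiliary factor. The paper multiplies by the purely atomic probability space $(G,\mathcal{P}(G),\nu)$ with $\nu$ positive on singletons and $G$ acting by left translation: there freeness of the product action is immediate (each fibre $Z\times\{h\}$ is moved off itself), and the atomic fibre makes the product measure and the freeness check completely elementary. You multiply instead by a Bernoulli shift, i.e.\ an atomless probability space carrying a measure-preserving, essentially free action. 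What your choice buys is precisely the point you flag as delicate: since the Radon--Nikodym derivative of the second factor is $1$, the constant-fibre map $f\mapsto f\otimes\chi_Y$ intertwines the induced actions of \eqref{induced transformation} on the nose, whereas over $(G,\nu)$, with $\nu$ quasi-invariant but not invariant, the induced action on the product carries the extra density factor $(\nu(gh)/\nu(h))^{1/p}$ in the $G$-direction, so measure preservation of the auxiliary action is doing real work in the equivariance computation. What it costs is that freeness of the auxiliary action is no longer free of charge, and the product of a possibly non-$\sigma$-finite space with an atomless probability space needs slightly more measure-theoretic care than the paper's atomic fibre.

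Two small repairs are needed, neither affecting the structure of the argument. First, the Bernoulli shift over the two-point base is essentially free only when $G$ is infinite; for finite $G$ (which the theorem also covers) the constant configurations form a positive-measure fixed set. Take an atomless base, e.g.\ $Y=[0,1]^{G}$ with product Lebesgue measure: essential freeness then holds for every countable $G$, the space is still countably separated, and your appeal to \Cref{multicover partition} for setwise freeness of the diagonal action goes through verbatim. Second, since $X$ need not be $\sigma$-finite, ``evaluating on rectangles and extending to the generated $\sigma$-algebra'' is not by itself a proof of non-singularity of the diagonal action, nor of the factorisation of the Radon--Nikodym derivative: you should fix the c.l.d.\ product of \Cref{product measure space} and argue fibrewise (for $W$ in the product $\sigma$-algebra the map $x\mapsto\nu(W_x)$ is measurable because $\nu$ is finite, and a set that is null on every finite-measure rectangle is null by semi-finiteness of $\mu$), or verify equivariance of $\Phi$ only on the dense sublattice of functions supported on sets of finite measure, where everything reduces to the $\sigma$-finite case. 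With these adjustments your route is complete, and arguably cleaner on the equivariance step than the paper's.
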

\begin{proof}
	We have just proved that $\tgfree$ is model complete, so we just need to show that every model of $T_G$ embeds in a model of $\tgfree$, the converse being trivial. 
	
	Consider a concrete $G$-$L_p$ lattice $ E = L^{p}(Z,Σ,μ,π)$ and endow $(G,£P(G))$ with a probability measure $ν$ which is positive on every singleton (for instance, if $ι$ is an injection of $G$ into $\zz_{>0}$, then $ν§{g} = 1/2^{ι(g)}$ defines a suitable measure). This way the action $λ$ by multiplication on the left  is non singular with respect to $ν$.
	
	Let $£X$ be a product of $(Z,Σ,μ)$ and $(G,£P(G),ν)$, in the sense of \Cref{product measure space}, and let $π\otimes λ$ the map defined by
	\begin{equation*}
		(π\otimes λ) (g) (z,h) = §(π(g)(z) , gh),
	\end{equation*}
	for all $g,h ∈ G$ and $z∈Z$. One can check that $π\otimes λ$ is indeed a non-singular action on $£X$ and that it is also free, since for any non-identical $g∈G$ and set $A ∈ £X$, there is some $h∈G$ such that $B = A ∩ (Z × §{h})$ has positive measure, and $gB$ is disjoint from $B$. Since $£X$ is atomless, this means that the $G$-$L_p$ lattice $ F = L^{p}(£X,π\otimes λ)$ is a model of $T_G^{\mathrm{free}}$. 
	Finally, $E$ can be embedded into $F$ via the map $Φ$ defined by $Φ(v)(z,g) = v(z)$. 
\end{proof}

We will show that $\tgfree$ has quantifier elimination following the same lines as in \cite{scielzo}, with the additional crucial observation that the algebraic closure of a subset of an $L_p$ lattice coincides with the sublattice generated by it \cite[Lemma~3.12]{lpIndep}.

Given a cardinal $κ>2^{\aleph_0}$, recall that a structure is said to be \emph{$κ$-universal} if it is $κ$-strongly homogeneous and $κ$-saturated. Henceforth, we will work in a $κ$-universal model $(£U,ρ)$ of $T_G$, for some large $κ$.

\begin{theo}\label{qe of TA}
	$\tgfree$ has quantifier elimination.
\end{theo}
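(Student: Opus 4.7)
The plan adapts the quantifier-elimination argument of \cite{scielzo} for $G = \mathbb Z$, with the crucial new input being that in an $L_p$ lattice $\acl(A)$ coincides with the closed sublattice generated by $A$ \cite[Lemma~3.12]{lpIndep}. My guiding criterion is the continuous-logic analogue of the classical fact that a model companion has quantifier elimination if and only if the class of models of its universal part (here, the class of $G$-$L_p$ lattices) has the amalgamation property.

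First I would pin down algebraic closure in $\tgfree$. In the language $£L_G$, the substructure generated by $A$ is the smallest closed $G$-invariant sublattice containing $A$, i.e., the closed $\lBL$-sublattice generated by $GA$. Applying \cite[Lemma~3.12]{lpIndep} to the $\lBL$-reduct then gives
\[
\acl^{\tgfree}(A) \;=\; \overline{\langle GA\rangle_{\lBL}} \;=\; \langle A\rangle_{£L_G}.
\]
Consequently the isomorphism type of $\langle \bar a\rangle$ is determined by the quantifier-free type of $\bar a$, and any equality of qf-types yields an $£L_G$-isomorphism $f\colon \langle \bar a\rangle \to \langle \bar b\rangle$.

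I would then amalgamate $G$-$L_p$ lattices over a common substructure $A$. Given embeddings $A \hookrightarrow B_1, B_2$, the representation \Cref{representation theorem} realises each $B_i$ as $L^p(X_i,\Sigma_i,\mu_i,\pi_i)$ with $\pi_i$ non-singular, the subsystems corresponding to $A$ coinciding. A fibre product of the $(X_i,\Sigma_i,\mu_i)$ over the representation of $A$, equipped with the diagonal $G$-action and the product measure (whose existence follows from \Cref{product measure space}), yields a $G$-$L_p$ lattice amalgamating $B_1$ and $B_2$ over $A$. Since $\tgfree$ is the model companion of $T_G$, the amalgam embeds further into a model of $\tgfree$.

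Putting the pieces together, suppose $\bar a, \bar b \in £U$ satisfy $\type_{\mathrm{qf}}(\bar a) = \type_{\mathrm{qf}}(\bar b)$, and take $f$ as above. Viewing $\langle \bar a\rangle$ and $\langle \bar b\rangle$ as two substructures of $£U$ glued along $f$, the amalgamation step produces a model $D \models \tgfree$ receiving embeddings of two copies of $£U$ agreeing over $f$. By model completeness (\Cref{model completeness of tgfree}) and $\kappa$-saturation of $£U$, $D$ embeds elementarily into $£U$; strong homogeneity of $£U$ then lifts the resulting partial elementary map to an automorphism of $£U$ sending $\bar a$ to $\bar b$, whence $\type(\bar a) = \type(\bar b)$. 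The main obstacle lies in the amalgamation step: the diagonal action on the fibre product must be shown to be non-singular and to induce equivariant isometric lattice embeddings of each $B_i$ into the amalgam, which requires careful bookkeeping of the Radon--Nikodym densities appearing in \eqref{induced transformation}. Freeness of the amalgam plays no role, being delegated to the final model-companion step, which cleanly separates the algebraic amalgamation problem from the generic-freeness axioms.
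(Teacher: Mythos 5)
There is a genuine gap, and it sits exactly where you acknowledge the "main obstacle": the amalgamation step. The fact you invoke, \Cref{product measure space}, only provides the \emph{independent} product of two semi-finite measure spaces; it does not give a fibre (relative) product over a common subsystem. A relatively independent joining of two systems over a common factor requires a disintegration of each measure over the factor, and such disintegrations are available for standard (Lebesgue) probability spaces but not for the general semi-finite measure spaces produced by \Cref{representation theorem} (Stone spaces of measure algebras are typically far from standard Borel); moreover your actions are merely non-singular, not measure-preserving, which adds Radon--Nikodym bookkeeping that the cited fact does not address. Worse, the common substructure $A$ is only a closed $G$-invariant sublattice of each $B_i$, and the remark following \Cref{representation theorem} shows explicitly that such a sublattice need not correspond to a sub-\tsigma-algebra of the representation (the example $F=\{a+\tfrac12\tau(a)\}$), so "the fibre product over the representation of $A$" is not even well-defined as stated. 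If one retreats to the plain product measure that \Cref{product measure space} does supply, the two copies of $A$ become independent rather than identified, and the resulting embeddings of $B_1$ and $B_2$ do not agree over $A$, which is precisely what amalgamation requires. So the construction as described either does not exist with the cited tools or does not amalgamate over $A$.

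The paper takes a different and purely model-theoretic route that avoids all of this: working in a large model of the theory \alpl\ of atomless $L_p$ lattices, it places $E_1$ and $E_2$ forking-independently over $A$ (using stability of \alpl\ and the fact that algebraic closure in \alpl\ is the generated Banach lattice, so $E_1\cap E_2=\acl A$), and then amalgamates the automorphisms $ρ_1(g)$ and $ρ_2(g)$ by Lascar's theorem on amalgamation of automorphisms over independent bases in a stable theory; uniqueness of the extension to $\acl(E_1\cup E_2)$ gives that the amalgamated maps again form a $G$-action. Your opening criterion (model companion plus amalgamation of the universal part gives quantifier elimination) and your use of $\acl=$ generated sublattice are the right ingredients, but the amalgam itself should be built by this stability-theoretic argument rather than by a measure-theoretic fibre product; note also that freeness is indeed irrelevant at this stage, as in the paper, since one amalgamates models of $T_G$, not of \tgfree.
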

\begin{proof}
	Since $\tgfree$ is the model companion of $T_G$, we just need to prove that $(T_G)_{∀}$ has the amalgamation property, that is, any two models $(E_1,ρ_1)$ and $(E_2,ρ_2)$ of $T_G$ can be amalgamated over a common $£L_G$-substructure $A$. We may assume that the $£L_{\mathrm{BL}}$-structures $E_1$ and $E_2$ are in $£U$, and that $E_1$ is independent of $E_2$ over $A$, so that $E_1 ∩ E_2 = \acl A$, which is precisely the Banach lattice generated by $A$.
	This means that if $g∈G$, then $ρ_1(g)$ and $ρ_2(g)$ coincide on $\acl A$ too, so we may as well assume that $A$ is algebraically closed, in the sense of $£L_{\mathrm{BL}}$-structures.
	
	Since \alpl\ is stable, the automorphisms of its models can be amalgamated (see \cite[Thereom~3.3]{lascar}), meaning that if $α_1 ∈ \aut(E_1)$ and $α_2 ∈ \aut(E_2)$ coincide over $A$, then $α_1 ∪ α_2$ is elementary on $E_1 ∪ E_2$, and it may thus be extended to a unique automorphism  $α$ of $E_3 = \acl(E_1 ∪ E_2)$. It follows that there is a $ρ_3\colon G \to \aut(E_3)$ satisfying $ρ_3(g) \restr E_1 = ρ_1(g)$ and $ρ_3(g) \restr E_2 = ρ_2(g)$ for all $g∈G$. 
	Moreover, for any $g,h∈G$, $ρ_3(g)ρ_3(h)$ coincides with $ρ_1(g)ρ_1(h) ∪ ρ_2(g)ρ_2(h) = ρ_1(gh) ∪ ρ_2(gh)$ on $E_1 ∪ E_2$, which coincides with $ρ_3(gh)$, so by uniqueness of the extension, $ρ_3(gh) = ρ_3(g)ρ_3(h)$, showing that $(E_3,ρ_3)$ is a model of $T_G$ amalgamating $(E_1,ρ_1)$ and $(E_2,ρ_2)$ over $A$.
\end{proof}

An immediate corollary of quantifier elimination is that the theory $\tgfree$ is complete, since the only constant is $0$, which is fixed by all functions of $£L_G$. Suppose now that $(£U, ρ)$ is a sufficiently large universal model of $\tgfree$, and denote the orbit of $a∈£U$ by $Ga$. Another corollary of quantifier elimination is the following.

\begin{lemma}
	Let $\seq{a}$ and $\seq{b}$ be two tuples of the same lengths in $£U$, and $C$ a small subset of $£U$. Then $\seq{a}$ and $\seq{b}$ have the same type over $C$ in the sense of $(£U,ρ)$ if and only if $G\seq{a}$ and $G\seq{b}$ have the same type over $GC$ in the sense of $£U$.
\end{lemma}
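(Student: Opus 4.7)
The plan is to exploit the quantifier elimination of $\tgfree$ established in \Cref{qe of TA}, together with the observation that each $g\in G$ is interpreted in $(£U,\rho)$ as a function symbol of $£L_G$. Substituting $g$-translates therefore converts $£L_{\mathrm{BL}}$-formulas into $£L_G$-formulas, and conversely every $£L_G$-term can be rewritten as a lattice term in $g$-translates of its variables.

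The forward direction needs no appeal to quantifier elimination. Given an arbitrary $£L_{\mathrm{BL}}$-formula $\varphi$, a finite sub-tuple $(g_1 a_{i_1},\ldots,g_k a_{i_k})$ of $G\seq{a}$, and a finite parameter sub-tuple $(h_1 c_1,\ldots,h_\ell c_\ell)$ of $GC$, I would form the $£L_G$-formula $\psi(\seq{x},\seq{z}) := \varphi(g_1 x_{i_1},\ldots,h_\ell z_\ell)$. The assumption $\seq{a}\sametype[C]\seq{b}$ in $(£U,\rho)$ then gives $\psi^{(£U,\rho)}(\seq{a},\seq{c})=\psi^{(£U,\rho)}(\seq{b},\seq{c})$, which is precisely the equality of the $\varphi$-values on the corresponding sub-tuples of $G\seq{a}$ and $G\seq{b}$. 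Running over all such $\varphi$ and sub-tuples yields equality of the $£L_{\mathrm{BL}}$-types of $G\seq{a}$ and $G\seq{b}$ over $GC$.

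For the converse, I would start from the hypothesis that $G\seq{a}$ and $G\seq{b}$ have the same $£L_{\mathrm{BL}}$-type over $GC$ and invoke \Cref{qe of TA} to reduce to showing that $\seq{a}$ and $\seq{b}$ agree on every quantifier-free $£L_G$-formula with parameters from $C$. Any such formula is a continuous combination of norms of $£L_G$-terms, and by pushing the group symbols to the leaves, each such term can be presented as a lattice term applied to a finite tuple of $g$-translates of variables and parameters. Its evaluation at $(\seq{a},\seq{c})$ is therefore the value of a quantifier-free $£L_{\mathrm{BL}}$-formula on a finite sub-tuple of $G\seq{a}$ with parameters from $GC$, which by hypothesis coincides with the corresponding value for $\seq{b}$.

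The whole argument is essentially formal. The one point requiring mild care, rather than a genuine obstacle, is that $G\seq{a}$ is typically infinite, so the statement about equality of types of $G\seq{a}$ and $G\seq{b}$ must be read as compatibility of the types of all their finite sub-tuples. Once that is accepted, each direction reduces to the substitution described above, with quantifier elimination carrying the backward implication across the gap between quantifier-free and full types.
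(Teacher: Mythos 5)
Your argument is correct and is exactly the intended one: the paper states this lemma as an immediate corollary of \Cref{qe of TA} without spelling out details, and your proof — the forward direction by pure substitution/reduct semantics, the converse by using quantifier elimination of $\tgfree$ together with the fact that each $g$ acts as a lattice automorphism so every $£L_G$-term rewrites as an $£L_{\mathrm{BL}}$-term in $G$-translates — is precisely the standard argument being invoked.
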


As in the case where $G = \zz$, we can define the ternary relation $A \indep{\ensuremath{G}}_C B$ to mean that $\dcl§(G(AC))$ is forking-independent of $\dcl§(G(BC))$ over $\dcl§(G(C))$, which is equivalent to $GA$ being forking-independent of $GB$ over $GC$, because the definable closure of a set is just the Banach lattice generated by it. 

With the same proof of \cite[Lemma~3.13]{scielzo}, \textit{mutatis mutandis}, one can show that this relation $\indep{\ensuremath{G}}$ satisfies symmetry, transitivity, finite character, extension, local character, and stationarity, thus proving the following. 

\begin{theo}
	$\tgfree$ is stable and the relation $\indep{\ensuremath{G}}$ coincides with forking independence.
\end{theo}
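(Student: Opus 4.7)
The plan is to invoke the Kim--Pillay style characterisation of stability: if an $\aut(\monster)$-invariant ternary relation $\indep{*}$ on small subsets of a monster model satisfies symmetry, transitivity, finite character, extension, local character, and stationarity over algebraically closed sets, then the ambient theory is stable and $\indep{*}$ coincides with forking independence. So it suffices to verify these six properties for $\indep{\ensuremath{G}}$, pulling each one back to the corresponding property of forking independence $\indep{f}$ in the reduct to $\alpl$, which is known to be stable. The crucial auxiliary fact is the one cited above from \cite{lpIndep}: in any $L_p$ lattice, algebraic closure equals definable closure equals the closed sublattice generated by the set. This identifies $\dcl^{\alpl}(GA) = \acl^{\alpl}(GA)$, so by the quantifier elimination proved in \Cref{qe of TA}, the same holds in $\tgfree$: $\dcl^{\tgfree}(A)$ is the closed Banach sublattice generated by $GA$.

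First I would dispatch the three automatic properties. Symmetry of $\indep{\ensuremath{G}}$ is immediate from symmetry of $\indep{f}$, since the relation is defined symmetrically in $GA$ and $GB$ over $GC$. Finite character is immediate, because any finite tuple from $GA$ lies in $G\seq{a}$ for some finite $\seq{a}\subseteq A$, and then finite character of $\indep{f}$ applies. Transitivity: if $A\indep{\ensuremath{G}}_C B$ and $A\indep{\ensuremath{G}}_{BC} D$, then $GA\indep{f}_{GC}GB$ and $GA\indep{f}_{G(BC)}GD$; now $G(BC)=GB\cup GC$, so the transitivity of $\indep{f}$ yields $GA\indep{f}_{GC}G(BCD)$, which is what is required.

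For extension, given $C\subseteq B$ and a further set $D$, I would extend $\tp(G\seq{a}/GB)$ in the reduct to a non-forking type over $G(BD)$, realise it by some tuple $\seq{a}'$ in the ambient monster, and use quantifier elimination in $\tgfree$ (\Cref{qe of TA}) to conclude $\seq{a}'\equiv_{B}\seq{a}$ in $(\monster,ρ)$; by construction $\seq{a}'\indep{\ensuremath{G}}_B D$. For local character, start with a tuple $\seq{a}$ and a set $B$; local character of $\indep{f}$ provides a subset $C_0\subseteq GB$ with $\card{C_0}\leqslant\aleph_0$ such that $G\seq{a}\indep{f}_{C_0}GB$. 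Take $C\subseteq B$ with $GC\supseteq C_0$ and $\card{C}\leqslant\card{C_0}\cdot\card{G}=\aleph_0$; monotonicity of $\indep{f}$ then gives $\seq{a}\indep{\ensuremath{G}}_C B$. The countability of $G$ is essential here, but it is assumed throughout \Cref{GLp lattices}.

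The main obstacle is stationarity, which I would handle last. Suppose $C$ is algebraically closed in $(\monster,ρ)$ and $\seq{a}\equiv_C\seq{b}$ with $\seq{a}\indep{\ensuremath{G}}_C B$ and $\seq{b}\indep{\ensuremath{G}}_C B$. Two points must be assembled. First, the lemma just above translates $\seq{a}\equiv_C\seq{b}$ into $G\seq{a}\equiv_{GC}G\seq{b}$ in the reduct. Second, because $C=\acl^{\tgfree}(C)$ is $G$-invariant and closed under lattice operations, the set $GC$ coincides with $\dcl^{\alpl}(GC)=\acl^{\alpl}(GC)$, so it is algebraically closed in the reduct. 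Stationarity of $\indep{f}$ over algebraically closed sets in the stable theory $\alpl$ then yields $G\seq{a}\equiv_{GB\cup GC}G\seq{b}$; invoking quantifier elimination once more gives $\seq{a}\equiv_{BC}\seq{b}$ in $(\monster,ρ)$. Collecting the six properties, stability of $\tgfree$ and the identification of $\indep{\ensuremath{G}}$ with forking follow from the Kim--Pillay theorem for continuous logic.
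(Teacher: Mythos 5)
Your overall route is the same as the paper's: one verifies that $\indep{\ensuremath{G}}$ satisfies symmetry, transitivity, finite character, extension, local character, and stationarity, and then invokes the characterisation of forking independence in stable continuous theories. Your treatment of symmetry, transitivity, finite character and local character is fine, and for stationarity you correctly reduce to stationarity of forking in $\alpl$ over algebraically closed sets, after observing that an $\acl$-closed set $C$ of the expansion is $G$-invariant and $\acl$-closed in the reduct; this is exactly the reduction the paper intends.

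The one genuine gap is in the extension step. A non-forking extension of the $\lBL$-type of $G\seq{a}$ over $GB$ to $G(BD)$ is a type in variables indexed by $G\times n$, and a realisation of it in the reduct is merely some $(G\times n)$-indexed tuple $\seq{c}$: since the group action is not part of $\lBL$, nothing forces this realisation to be coherent, i.e.\ of the form $G\seq{a}'$ with $c_{g,i}=g\,a'_i$ for an actual tuple $\seq{a}'$ of the expanded monster. So ``realise it by some tuple $\seq{a}'$'' is precisely what needs proof; without coherence you cannot even apply the translation lemma to conclude $\seq{a}'\equiv_{B}\seq{a}$, and the variant where one instead moves $D$ to an independent conjugate copy over $B$ runs into the same obstruction. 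The repair uses the toolkit of the quantifier-elimination proof (\Cref{qe of TA}): amalgamate the $G$-invariant sublattice $\dcl(G(\seq{a}B))$ with $\dcl(G(BD))$ over $\dcl(GB)$ by Lascar's theorem on amalgamating automorphisms of models of the stable theory $\alpl$, so that the copy of $\seq{a}$ is forking-independent of $G(BD)$ over $GB$ and the $G$-action extends to the amalgam; then embed the amalgam into the monster over $\dcl(G(BD))$ (possible by saturation, and partial elementary by quantifier elimination and model completeness, \Cref{model completeness of tgfree}). The image of the copy of $\seq{a}$ is the desired $G$-coherent $\seq{a}'$. With this step supplied, your plan agrees with the paper's proof.
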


The proof of \Cref{amenable rokhlin lemma} crucially relies on $G$ being amenable, and we do know if a decomposition theorem is still possible for non-amenable groups, not even for free group on two generators $F_2$. Since this decomposition was our main tool for proving the model completeness of $\tgfree$, this raises the following question.

\begin{question}
	When $G$ is not amenable, does $T_G$ admit a model companion?
\end{question}

A result in this direction is given in \cite{free-groups-actions}, where the authors prove that the theory of probability measure-preserving actions of free groups has a model companion. So far, we do not know if this result can be generalised to the case of actions of free groups on $L_p$ lattices.

\nocite{hodges}

\printbibliography

\end{document}